\theoremstyle{plain}
\newtheorem{theorem}{Theorem}[section]
\newtheorem{proposition}[theorem]{Proposition}
\newtheorem{lemma}[theorem]{Lemma}
\newtheorem*{theorem*}{Theorem}
\newtheorem*{proposition*}{Proposition}
\newtheorem*{lemma*}{Lemma}
\newtheorem*{corollary*}{Corollary}
\newtheorem*{property*}{Properties}
\newtheorem*{conjecture*}{Conjecture}
\theoremstyle{definition}
\newtheorem{definition}[theorem]{Definition}
\newtheorem{remark}[theorem]{Remark}
\newtheorem*{definition*}{Definition}
\newtheorem*{example*}{Example}
\newtheorem*{remark*}{Remark}
\theoremstyle{remark}
\newtheorem*{note*}{Remark}
\newtheorem*{exercise*}{Exercise}
\newtheorem*{notation*}{Notation}
\newcommand{\m}[1]{\mathcal{#1}}
\newcommand{\bb}[1]{\mathbb{#1}}
\newcommand{\mrm}[1]{\mathrm{#1}}
\newcommand{\mf}[1]{\mathfrak{#1}} %lettere gotiche
\newcommand{\scr}[1]{\mathscr{#1}}
\newcommand{\del}{\partial}
\newcommand{\delbar}{\bar{\partial}} 
\newcommand{\dd}{\mrm{d}}
\newcommand{\Aut}{\mathrm{Aut}}
\newcommand{\Scal}{\mathrm{Scal}}
\newcommand{\Lie}{\mathrm{Lie}}
\newcommand{\grad}{\mathrm{grad}}
\newcommand{\ext}[1]{\bigwedge\nolimits^{#1}}
\newcommand{\Ok}[1]{O\left(k^{#1}\right)}
\DeclarePairedDelimiter\card{\lvert}{\rvert} %una barra - per adattare la dimensione \card*
\DeclarePairedDelimiter\norm{\lVert}{\rVert} %norma, due barre
\DeclarePairedDelimiter{\set}{\{}{\}}   %insiemi (descrizione) da usare con set*
\numberwithin{equation}{section} % eq. numerate per sezione
\author{Annamaria Ortu}
\title{Moment maps and stability of holomorphic submersions}
\address{Department of Mathematical Sciences, University of Gothenburg, Sweden}
\email{ortu[at]chalmers[dot]se}
\begin{document}

\begin{abstract}
We prove a finite-dimensional moment map property for certain canonical relatively K\"ahler metrics on holomorphic fibrations, called \emph{optimal symplectic connections}. We then relate the existence of zeroes of this moment map to the \emph{stability} of the fibration, where the stability property we consider is a version of K-stability that takes into account the fibration structure, first introduced by Dervan--Sektnan.
In particular, we prove that a stable deformation of a fibration admitting an optimal symplectic connection still admits an optimal symplectic connection, through a new approach using the finite-dimensional moment map properties and the moment map flow.

We include an appendix with a proof of a result considered by Sz\'ekelyhidi that a K-polystable deformation of a constant scalar curvature K\"ahler manifold still admits a constant scalar curvature metric, using the same technique.
\end{abstract}

\maketitle
%\tableofcontents
\thispagestyle{empty}

\section{Introduction}

Let $(M, \omega)$ be a symplectic manifold, and $K$ a compact group acting on it by symplectomorphisms. The action is said \emph{Hamiltonian} if there exists a map $\mu : M \to \Lie(K)^*$, called a \emph{moment map}, that is equivariant with respect to the action of $K$ on $M$ and the co-adjoint action on the dual Lie algebra, and that satisfies
\[
\dd_x \langle\mu,\xi \rangle = \omega(-, \sigma_x(\xi)),
\]
where $\sigma$ denotes the infinitesimal action. Since a Hamiltonian function is only unique up to a constant, the role of the moment map is to choose a Hamiltonian function for the infinitesimal vector fields.
When the manifold $M$ is projective and $\omega$ is in the first Chern class of a ample line bundle $L$, the notion of a moment map is related to the algebro-geometric notion of stability in the sense of Geometric Invariant Theory (GIT). Originally introduced by Mumford \cite{Mumford_Stability} to study the moduli space of holomorphic vector bundles, GIT stability is related to moment maps by the Kempf--Ness theorem \cite{KempfNess} which establishes an equivalence between the existence of zeroes of a moment map in a given orbit and the GIT polystability of the orbit.

Donaldson \cite{Donaldson_momentmap} and Fujiki \cite{Fujiki_momentmap} extended the theory of Hamiltonian actions to the infinite-dimensional setting of K\"ahler metrics and complex structures, and they proved that the constant scalar curvature operator is a moment map for the action of the group of Hamiltonian symplectomorphisms on the space of almost complex structures on a compact symplectic manifold.
The Yau-Tian-Donaldson \cite{Yau_OpenProblems, Tian_KahlerEinstein, Donaldson_1StabilityToric} conjecture is an infinite-dimensional analogue of the Kempf--Ness theorem: it predicts that the existence of a constant scalar curvature K\"ahler (cscK) metric is equivalent to an algebro-geometric notion of stability, \emph{K-stability}, which is not a genuine GIT notion but parallels GIT stability in the infinite-dimensional setting.
While still open in full generality, the conjecture is known to be true for Fano varieties, due to Chen--Donaldson--Sun \cite{ChenDonaldsonSun_Fano1, ChenDonaldsonSun_Fano2, ChenDonaldsonSun_Fano3}.
The fact that the existence of constant scalar curvature K\"ahler (cscK) metrics implies K-stability is also a theorem of Donaldson \cite{Donaldson_1StabilityToric}, Stoppa \cite{Stoppa_Kstability} and Berman--Darvas--Lu \cite{BermanDarvasLu_Kenergy}.

A finite-dimensional result in the opposite direction was first considered by Sz\'ekelyhidi \cite{Szekelyhidi_deformations}: a K-polystable infinitesimal deformation of a cscK manifold still admits a cscK metric.
Sz\'ekelyhidi's technique uses that the scalar curvature function is a moment map on the finite-dimensional vector space that parametrises infinitesimal deformations of a cscK manifold.
There is a gap, however, in Sz\'ekelyhidi's proof, explained in Remark  \ref{rmk:error}.

In this paper, we prove a moment map property and we establish an analogous result in the setting of holomorphic fibrations.
Instead of cscK metrics, on the fibration we consider a canonical choice of a relatively K\"ahler metric, called an \emph{optimal symplectic connection} \cite{DervanSektnan_OSC1, Ortu_OSCdeformations}.
Our main result is the following.

\begin{theorem}\label{Thm1}
Let $Y \to B$ be a proper holomorphic submersion admitting an optimal symplectic connection and let $W \to B$ be a deformation. If $W \to B$ is stable, then it admits an optimal symplectic connection.
\end{theorem}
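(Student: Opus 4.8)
The plan is to follow the moment-map-flow strategy that the abstract advertises, adapting Székelyhidi's deformation argument to the fibration setting. First I would set up the finite-dimensional picture: near the given fibration $Y\to B$ with its optimal symplectic connection, the nearby complex structures on the total space compatible with the fibration are governed by a finite-dimensional space $H$ of harmonic representatives (a Kuranishi-type slice), carrying a linear action of the relevant reductive stabilizer group $G^{\mathbb C}$, with maximal compact $K$. The key input — which I would isolate as a proposition before this theorem — is that the optimal symplectic connection equation, linearized and restricted to $H$, is a moment map $\mu\colon H\to\mathfrak k^*$ for the $K$-action with respect to the natural flat Kähler form on $H$. This is the finite-dimensional moment map property promised in the abstract, and it is the analogue of Donaldson--Fujiki for optimal symplectic connections combined with the Kuranishi reduction; the deformation $W\to B$ corresponds to a point $w\in H$, and its stability must be shown to coincide with GIT polystability of the orbit $G^{\mathbb C}\cdot w$ via an appropriate Kempf--Ness / Hilbert--Mumford comparison, matching the Dervan--Sektnan stability notion with one-parameter subgroups of $G^{\mathbb C}$.

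Next I would run the moment map flow (the downward gradient flow of $\|\mu\|^2$ on $H$) starting from $w$. Standard finite-dimensional GIT theory — the Kempf--Ness function is proper and convex along the flow on a polystable orbit, together with the Łojasiewicz inequality for the real-analytic function $\|\mu\|^2$ — gives that the flow converges to a limit $w_\infty$ in the closure of $G^{\mathbb C}\cdot w$ with $\mu(w_\infty)=0$, and polystability forces $w_\infty\in G^{\mathbb C}\cdot w$, i.e. the limit is biholomorphic to $W\to B$ itself rather than to a degeneration. This is precisely where Székelyhidi's gap (Remark~\ref{rmk:error}) must be repaired: one has to ensure that the zero is attained on the orbit of $w$ and not merely in its closure, which is exactly what finite-dimensional GIT for polystable (as opposed to merely semistable) points delivers, provided the moment map / stability dictionary of the previous step is correctly established. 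I would then argue that a zero of $\mu$ on $H$ at a complex structure isomorphic to $W$ produces a genuine optimal symplectic connection on $W\to B$: here one needs that solving the finite-dimensional equation on the slice, together with the implicit function theorem transverse to the slice (the cokernel having been quotiented out by passing to $H$), yields an honest solution of the full nonlinear optimal symplectic connection equation — this is the same reduction used in \cite{DervanSektnan_OSC1, Ortu_OSCdeformations}.

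The main obstacle I expect is establishing the moment map property itself on the correct finite-dimensional slice, together with the precise matching between the Dervan--Sektnan fibration stability and finite-dimensional GIT polystability. Unlike the cscK case, the optimal symplectic connection equation is an equation on $B$ obtained by fibre-integration of a curvature-type quantity, so the linearization is a fourth-order operator on $B$ coupled to fibrewise data, and one must check that its kernel-and-cokernel structure is well-behaved enough to produce a finite-dimensional moment map with a flat ambient Kähler form — the subtlety being the choice of inner product (the one making fibre integration and the Laplacians self-adjoint) under which the linearized operator is the derivative of $\mu$. A secondary difficulty is that the stabilizer group relevant to the fibration is the group of fibrewise automorphisms covering automorphisms of $B$, whose reductivity and whose pairing with test configurations for the fibration need to be handled carefully; once these structural facts are in place, the convergence of the moment map flow and the passage back to the infinite-dimensional equation are comparatively routine adaptations of the cscK argument carried out in the appendix.
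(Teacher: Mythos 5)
Your overall three-step strategy (Kuranishi reduction, finite-dimensional moment map, moment map flow plus stability) is indeed the one the paper follows, but three of your steps conceal genuine gaps. First, the moment map is not ``the equation linearized and restricted to $H$ with respect to the natural flat K\"ahler form'': the central difficulty --- and the very source of the gap in Sz\'ekelyhidi's and Br\"onnle's arguments recorded in Remark \ref{rmk:error} --- is that the naive finite-dimensional reduction (fix the symplectic form, vary the complex structure) produces a moment map for a symplectic form that is \emph{not} K\"ahler, so the gradient flow of $\norm{\mu}^2$ is not available. The fix is the Dervan--Hallam universal-family construction: one perturbs the relatively K\"ahler potential simultaneously with the complex structure (Lemma \ref{Lemma:perturbation_Kuranishi_map}, which requires uniform bounds on the right inverse of the linearised operator across the Kuranishi space) so that the full nonlinear optimal symplectic connection operator, paired with a fibrewise moment map $\tau$, becomes a moment map for $K_{\pi,v}$ with respect to the Weil--Petersson K\"ahler metric (Theorem \ref{Thm:osc_momentmap}). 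Without this, no flow argument can start.

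Second, you propose to prove that Dervan--Sektnan fibration stability coincides with GIT polystability of the orbit $G^{\bb{C}}\cdot w$ via a Kempf--Ness/Hilbert--Mumford dictionary; no such equivalence is established here (and it is not known), nor is it needed. The paper only tests stability against the specific one-parameter degenerations produced by the flow dichotomy of Proposition \ref{Prop:not_in_orbit}: the pairing $\langle\vartheta(\widetilde w),\xi\rangle$ is identified, via the adiabatic expansions \eqref{eq:expansions} and Proposition \ref{Prop:expansion_scalar_curvature} together with Theorem \ref{Thm:classical_Futaki_equal_DF}, with (minus) the subleading coefficient $W_1$ of the Donaldson--Futaki expansion \eqref{Eq:DF_invariant_expansion} of the induced fibration degeneration, and polystability then forces this pairing to vanish. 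Third, ``standard finite-dimensional GIT'' does not apply off the shelf because the flow lives on an open ball $V_\pi$ with a non-linear action and a non-quadratic moment map, not on a complete space: one must prove that the flow starting near the origin remains in the ball and converges there (Proposition \ref{prop:stabilityflow}), which is a key novelty of the argument, and one must separately handle points with non-discrete stabiliser by projecting the moment map orthogonally to a maximal torus of the stabiliser. Your final step --- that a zero of the moment map on the slice yields an honest optimal symplectic connection --- is correct in spirit and corresponds to the proposition following Theorem \ref{Thm:osc_momentmap}, though it is the perturbation of the K\"ahler potential, not an implicit function theorem transverse to the slice, that does the work.
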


We use a new technique to prove this: it consists in reducing the infinite-dimensional problem of finding a solution to a PDE to a finite-dimensional problem of finding a zero of a moment map on the Kuranishi space, for which we rely on the recent theory of Dervan--Hallam  \cite{DervanHallam}.
Then we use the stability condition and the moment map flow to find an actual solution.

A similar approach, using moment map flows, was recently used in a different class of problems by Dervan \cite{Dervan_StabilityVarieties}, Delloque \cite{delloque2024HYM}, and the author with Sektnan \cite{OrtuSektnan_ssvb}.
One key novelty in the proof of Theorem \ref{Thm1} is that we need to prove a ``stability" result for the moment map flow showing that the flow remains in a bounded region inside the Kuranishi space (Proposition \ref{prop:stabilityflow}).
This is what allows us to use the moment map flow uniformly across a neighbourhood of the origin in the Kuranishi space.

In an appendix to this paper, we use the same approach to prove the following result on K-polystable deformations of cscK manifolds, already mentioned above. This fixes a mistake in Sz\'ekelyhidi's discussion \cite[Proposition 8]{Szekelyhidi_deformations}.

\begin{theorem}\label{Thm:App}
Let $X$ be a cscK manifold and $Y$ be a K-polystable deformation of $X$. Then $Y$ admits a cscK metric.
\end{theorem}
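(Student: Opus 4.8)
The plan is to transpose the strategy used for Theorem \ref{Thm1} to the classical setting, with cscK metrics in place of optimal symplectic connections. Fix a cscK metric $\omega$ on $X$ and let $J$ be the corresponding integrable complex structure. By the Kuranishi construction, the integrable almost complex structures near $J$ compatible with $\omega$ are parametrised, modulo the action of $\mathrm{Ham}(X,\omega)$, by a germ of an analytic space $\m{K} \subset H^{0,1}(X,T^{1,0}X)$ (harmonic representatives), cut out by the Kuranishi obstruction map. Since $X$ is cscK, $G := \Aut_0(X,J)$ is reductive; let $K$ be a maximal compact subgroup. Then $K$ acts on $\m{K}$ preserving the obstruction locus, and $\m{K}$, with its natural $L^2$ K\"ahler form, carries a \emph{finite-dimensional moment map} $\mu$ for this action, whose value at $v \in \m{K}$ is the mean-value-zero scalar curvature of the metric determined by $v$. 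This moment map property is the analogue here of the one for optimal symplectic connections; it is also the step where Sz\'ekelyhidi's argument needs to be supplemented, and a complete treatment is available through Dervan--Hallam \cite{DervanHallam}. Under this dictionary, the deformation $Y$ corresponds to a point $v_0 \in \m{K}$ close to the origin, and to prove the theorem it suffices to show that the $G$-orbit of $v_0$ inside $\m{K}$ meets $\mu^{-1}(0)$: such a zero is a complex structure near $J$ that is cscK with respect to $\omega$.

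The second step is to feed K-polystability of $Y$ into the finite-dimensional GIT data $(\m{K}, G, \mu)$. A one-parameter subgroup $\lambda$ of $G$ degenerates $v_0$ inside $\m{K}$ to a fixed point $v_\lambda$, and this degeneration defines a test configuration for $Y$ whose Donaldson--Futaki invariant equals, up to a positive multiplicative constant, the Hilbert--Mumford weight $\langle \mu(v_\lambda), \dot\lambda\rangle$. Hence K-polystability of $Y$ forces $v_0$ to be polystable for the $G$-action on $\m{K}$ in the Hilbert--Mumford sense: every such weight is $\geq 0$, with equality only for degenerations through automorphisms of $Y$.

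The third step converts this numerical polystability into an actual zero of $\mu$ in the orbit by running the moment map flow on $\m{K}$ from $v_0$, exactly as in the proof of Theorem \ref{Thm1}. The flow remains in the $G$-orbit of $v_0$ and decreases $\norm{\mu}^2$; it can only fail to converge to $\mu^{-1}(0)$ by leaving the neighbourhood of the origin where the finite-dimensional picture is valid, or by converging to a limit point that is a critical point of $\norm{\mu}^2$ with $\mu \neq 0$. The first is ruled out by the analogue of the boundedness statement of Proposition \ref{prop:stabilityflow}, and the second by the polystability of $v_0$ established above, since such a limit point would, by the finite-dimensional Kempf--Ness theory, exhibit a destabilising $\bb{C}^*$-degeneration. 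Therefore the flow converges to some $v_\infty \in \mu^{-1}(0)$ in the $G$-orbit of $v_0$, which yields a cscK metric on $Y$.

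The main obstacle is this last step: obtaining the a priori control that keeps the moment map flow inside the region where the finite-dimensional moment map picture holds, uniformly as $v_0$ varies over a neighbourhood of the origin. This ``stability of the flow'' is precisely the point missing from Sz\'ekelyhidi's discussion (see Remark \ref{rmk:error}), and in the present setting it again requires a dedicated boundedness argument along the lines of Proposition \ref{prop:stabilityflow}. A secondary difficulty, when $\Aut_0(X,J)$ is positive-dimensional, is to set up the correspondence between test configurations of $Y$ and $\bb{C}^*$-degenerations inside $\m{K}$ carefully enough that one extracts polystability of $v_0$ modulo the automorphisms of $Y$ rather than those of $X$.
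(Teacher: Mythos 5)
Your proposal follows essentially the same route as the paper's proof: the Dervan--Hallam moment map on the Kuranishi space (with the Kähler potential perturbed so that the scalar curvature lands in $\mf{k}$), the identification of the Hilbert--Mumford weight with the (Donaldson--)Futaki invariant, and the moment map flow combined with the boundedness statement of Proposition \ref{prop:stabilityflow} and the torus-orthogonal projection when the stabiliser is not discrete. The one imprecision is that the flow is only guaranteed to stay in the \emph{closure} of the $K^{\bb{C}}$-orbit, which is exactly why the dichotomy of Proposition \ref{Prop:not_in_orbit} (either the limit lies in the orbit with $\mu=0$, or there is a destabilising one-parameter subgroup whose weight polystability forces to vanish) is needed --- but your third step invokes precisely that dichotomy, so the argument matches the paper's.
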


We next briefly explain the optimal symplectic connection condition and the notion of stability we employ.
Let $Y \to B$ be a proper holomorphic submersion and let us fix an ample line bundle $L$ on the base and a relatively ample line bundle $H_Y$ over $Y$. The base $(B, L)$ and the relative polarisation $H_Y$ are considered fixed throughout.
To define optimal symplectic connections, we need to impose the following condition on the fibres, given in terms of analytic K-stability: we require that the submersion $(Y, H_Y) \to (B,L)$ degenerates to a submersion $(X, H_X) \to (B,L)$ whose fibres admit a cscK metric.
A two-form $\omega \in c_1(H_Y)$ induces a splitting of the tangent bundle of $Y$ into a vertical and a horizontal part, defined by orthogonality with respect of $\omega$, and is thus called a symplectic connection. An \emph{optimal} symplectic connection is defined as the solution to the geometric partial differential equation
\begin{equation}\label{Eq:OSCintro}
p_E(\Delta_{\m{V}}(\Lambda_{\omega_B} (\gamma^*F_{\m{H}})) + \Lambda_{\omega_B} \rho_{\m{H}}) + \frac{\lambda}{2}\nu = 0.
\end{equation}
In this expression, $F_{\m{H}}$, $\rho_{\m{H}}$ and $\nu$ are curvature quantities which depend on $\omega$ and $\lambda >0$ is a constant. The map $p_E$ is the projection onto the global sections of the vector bundle $E \to B$ of fibrewise holomorphy potentials with respect to the relatively cscK complex structure of $X$.

Optimal symplectic connections were first introduced by Dervan--Sektnan \cite{DervanSektnan_OSC1} in the case when the fibres admit a constant scalar curvature metric, and generalised by the author \cite{Ortu_OSCdeformations} in the relatively (i.e.\ fibrewise) K-semistable case. The condition reduces to the Hermite--Einstein equation when the fibration is defined as the projectivisation of a holomorphic vector bundle.

In \cite{Ortu_OSCmoduli} the author has developed an analytic deformation theory for optimal symplectic connections.
The objective there was to construct an analytic moduli space of holomorphic fibrations with discrete automorphisms.
Given a fibration $Y \to B$ admitting an optimal symplectic connection, there exists a complex analytic space, denoted by $V_\pi^+$, that parametrises the deformations of the complex structure of $Y \to B$ that are compatible with the relatively symplectic form and that preserve the projection onto the base and the property of degenerating to relatively cscK fibrations. In the study of deformations, it is crucial to allow K-semistable fibres, since the relatively cscK condition is not open.

Theorem \ref{Thm1} answers the question of when a deformation of the type just described still admits an optimal symplectic connection.
In particular, we relate the optimal symplectic connection property to \emph{stability of fibrations}, introduced by Dervan--Sektnan \cite{DervanSektnan_OSC2} and modelled on K-stability. They introduced a notion of fibration degeneration that extends the one of test configuration to account for the fibration structure, and they define stability in terms of a numerical invariant derived from the Donaldson-Futaki invariant.
In parallel with the Yau-Tian-Donaldson conjecture and the Hitchin-Kobayashi correspondence,
Dervan--Sektnan predicted that a holomorphic submersion is stable if and only if it admits an optimal symplectic connection, and they proved that the existence of a solution implies (semi)stability.
Their result, and the definition of stability itself, were further improved by Hallam \cite{Hallam_geodesics}.
Theorem \ref{Thm1} provides the first result proving that the existence of a optimal symplectic connection follows from the stability of the fibration.

We finally describe in more detail the technique of the proof of Theorem \ref{Thm1} in the context of optimal symplectic connections: it consists of three steps.
First, we perturb the relatively K\"ahler metric so that the optimal symplectic connection operator lies in the Lie algebra of the reductive group $K_{\pi, v}^{\bb{C}}$ of automorphisms of the equation; this uses the linearisation of the equation and the implicit function theorem.

Then, we prove that the optimal symplectic connection operator is a moment map on $V_\pi^+$ for the action of $K_{\pi, v}^{\bb{C}}$.
The Dervan--Hallam approach \cite{DervanHallam}, which we use at this step, consists of defining the moment map through the universal deformation family.
The advantage is that, by changing at the same time the K\"ahler metric and the complex structure, we obtain an interpretation of the scalar curvature as a finite-dimensional moment map with respect to the \emph{Weil--Petersson K\"ahler metric}.
This is in contrast with the more classical approaches to perturbative problems, which fix the K\"ahler form and vary the complex structure, and thus produce a finite-dimensional moment map for the scalar curvature with respect to a symplectic form which is not K\"ahler.

The final step of the proof is to use the \emph{moment map flow} for the finite-dimensional moment map; this is where the stability assumption is used.
In particular, having a K\"ahler moment map is crucial for applying the moment map flow.
Frequently used in the symplectic approach to GIT, the moment map flow is the flow along the gradient of the moment map squared; we mostly follow the formultation of Dervan--McCarthy--Sektnan \cite[\S4.2]{DervanMcCarthySektnan}, which in turn builds on \cite{GeorgoulasRobbinSalamon_GITbook, ChenSun_CalabiFlow}, and we extend these works to apply to the case of non-trivial stabiliser.
The flow starting from the point $w \in V_\pi^+$ that corresponds to a stable fibration $W \to B$ converges to a point $w_{\infty}$ inside $V_\pi^+$, representing a deformation of the fibration; this is where we use our ``stability" result for the moment map flow.
We prove that the stability of $W \to B$ implies that $w_\infty$ is a zero of the associated moment map and it belongs to the $K_{\pi, v}^{\bb{C}}$-orbit of $w$. Hence it is a solution to the optimal symplectic connection equation.

Our proof of Theorem \ref{Thm:App} consists of the same three steps, but applied to the technically simpler case of cscK metrics.

\subsection*{Outline}
In Section \ref{Sec:background} we review the moment map flow and K-stability. We also describe the theory of deformations of cscK metric in the context of varying the K\"ahler form and the complex structure at the same time.
In Section \ref{Sec:OSC}, we review the theory of deformations of fibrations and of optimal symplectic connections and we prove a Matsushima-type criterion for the group of automorphisms of the optimal symplectic connection equation.
In Section \ref{Sec:momentmap} we prove that the optimal symplectic connection operator is a moment map for the action of said group on the finite-dimensional space of infinitesimal deformations and we prove Theorem \ref{Thm1}.
In Appendix \ref{appendix} we give a proof that a K-polystable deformation of a cscK manifold admits a cscK metric.

\subsection*{Acknowledgments}
I wish to thank Vestislav Apostolov, Ruadha\'i Dervan, Michael Hallam, Lars Martin Sektnan, and Jacopo Stoppa for many discussions on the subject of this paper.
I thank Andr\'es Ib\'a\~{n}ez N\'u\~nez for discussions related to the moment map flow and Remark \ref{rmk:flow_stays_inside}.
%I thank Vestislav Apostolov for his comments on the first draft of this paper and on how to deal with non-integrable complex structures.
I thank the INdAM ``National Group for Algebraic and Geometric Structures, and their Applications" (GNSAGA) for financial support. I thank the Isaac Newton Institute for Mathematical Sciences, Cambridge, for support and hospitality during the programme \textit{New equivariant methods in algebraic and differential geometry} where work on this paper was undertaken. This work was supported by EPSRC grant no EP/R014604/1.

\section{Preliminaries}\label{Sec:background}
In this section, we recall some definitions and results on the moment map flow, on K-stability and on deformations of K\"ahler metrics with constant scalar curvature.

Let $(M, L)$ be a polarised K\"ahler manifold with a fixed ample line bundle $L$, $\omega$ a K\"ahler form in the first Chern class of $L$ and let $J$ be the complex structure of $M$. We denote by $g = g(\omega,J)$ the Riemannian metric on $M$ induced by $J$ and $\omega$, i.e.\
\[
g(\cdot, \cdot) = \omega(\cdot, J\cdot).
\]

The \emph{scalar curvature} of the K\"ahler metric $g(\omega,J)$ is a smooth function on $M$ defined as the contraction of the Ricci curvature:
\[
\mrm{Scal}(\omega,J) := \Lambda_{\omega} \mrm{Ric}(\omega, J).
\]

We consider K\"ahler metrics with constant scalar curvature, where the constant is given by the intersection product
\[
\widehat{S} = \frac{n \ c_1(M) \cdot c_1(L)^{n-1}}{c_1(L)^n}.
\]
In particular, $\widehat{S}$ is a \emph{topological constant} fixed by the polarisation.

Let $\Aut(M, L)$ be the group of automorphisms of $M$ which lift to $L$ and let $\mf{h}_0$ be its Lie algebra.
Then $\mf{h}_0$ can be characterised as the space of holomorphic vector fields that can be written as the Riemannian gradient $\nabla_g f$, for some function $f$ called a \emph{holomorphy potential} for the holomorphic vector field.
Let $K=\mrm{Isom}(M, \omega)$ be the group of holomorphic isometries of the K\"ahler metric $(\omega, J)$ and let $\mf{k}$ be its Lie algebra.
A well-known result of Matsushima and Lichnerowicz, known as the \emph{Matsushima criterion} or the \emph{Cartan decomposition}, states that when $\omega$ is cscK, the group $\Aut(M,L)$ is reductive \cite{Matsushima_structure_KE, Lichnerowicz_geometrie_groups_transformations} (see also \cite[\S3.4]{Gauduchon_Extremal_Introduction}).

\begin{theorem}\label{Thm:Matsushima_criterion}
Suppose that there exists a constant scalar curvature K\"ahler metric in $c_1(L)$. Then
\begin{equation*}
\mf{h}_0 = \mf{k}_0 \oplus J\mf{k}_0,
\end{equation*}
i.e.\ the group $\Aut(M,L)$ is reductive.
\end{theorem}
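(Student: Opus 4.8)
The plan is to run the classical argument of Lichnerowicz and Matsushima, realising the holomorphy potentials as the kernel of a fourth-order elliptic operator and exploiting a reality property of that operator in the constant scalar curvature case. First I would recall the analytic description of $\mf{h}_0$: a holomorphic vector field lies in $\mf{h}_0$ precisely when it is of the form $\nabla^{1,0}_g f$, the $(1,0)$-part of the gradient of a smooth complex-valued function $f$, and $f$ is unique up to an additive constant (which contributes the zero field). Moreover $\nabla^{1,0}_g f$ is holomorphic if and only if $\m{D} f := \delbar\, \nabla^{1,0}_g f = 0$, where $\m{D}$ is the associated first-order operator, so that $\m{D}^*\m{D}$ is the Lichnerowicz operator. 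Since $\m{D}^*\m{D}$ is self-adjoint and elliptic, $\ker \m{D} = \ker \m{D}^*\m{D}$, and the statement reduces to understanding this kernel as a space of functions on $M$.

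The crucial step is the Weitzenb\"ock-type identity for $\m{D}^*\m{D}$ acting on functions. Up to normalisation, it reads
\[
\m{D}^*\m{D} f = \Delta_\omega^2 f + \langle \mrm{Ric}(\omega,J),\, i\del\delbar f\rangle + \langle \del f,\, \del\Scal(\omega,J)\rangle,
\]
which I would either derive by a local computation in K\"ahler normal coordinates or cite from the literature. The leading term $\Delta_\omega^2$ and the Ricci term both commute with complex conjugation $f \mapsto \bar f$, whereas the first-order term involving $\del\Scal(\omega,J)$ is precisely the obstruction to this. Hence, when $\omega$ has constant scalar curvature that term vanishes identically and $\m{D}^*\m{D}$ becomes a \emph{real} operator, in the sense that it preserves real-valued functions and satisfies $\m{D}^*\m{D}\bar f = \overline{\m{D}^*\m{D} f}$.

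It follows that $\ker \m{D}^*\m{D}$ is invariant under conjugation, so splitting a holomorphy potential into real and imaginary parts gives $\ker \m{D}^*\m{D} = (\ker \m{D}^*\m{D})_{\bb{R}} \oplus i\,(\ker \m{D}^*\m{D})_{\bb{R}}$, where the subscript denotes the real-valued elements. To conclude, I would match this with the claimed splitting: a real element of $\ker \m{D}^*\m{D}$ is a real holomorphy potential, whose associated holomorphic vector field is the $(1,0)$-part of a Hamiltonian Killing field, that is, an element of $\mf{k}_0$; and multiplication by $i$ on potentials corresponds to the action of $J$ on vector fields, producing $J\mf{k}_0$. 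Since $\mf{k}_0 \cap J\mf{k}_0 = 0$ (a potential that is simultaneously real and purely imaginary up to a constant must be constant, hence gives the zero field), this yields $\mf{h}_0 = \mf{k}_0 \oplus J\mf{k}_0$; as $\mf{k}_0 = \Lie(K)$ with $K$ compact, $\mf{h}_0$ is its complexification, hence reductive, and $\Aut(M,L)$ is reductive.

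The main obstacle is really confined to the middle step: establishing the Weitzenb\"ock identity with its first-order term in a form that makes transparent that this is the unique obstruction to the reality of $\m{D}^*\m{D}$ takes a somewhat delicate computation in K\"ahler geometry (or careful bookkeeping with the standard references). Everything else is then linear algebra together with routine elliptic regularity and Hodge theory. One could instead bypass the explicit identity via a Futaki-type integration argument over the complexified orbit, but the operator-theoretic proof sketched above is the cleanest, and is the one I would carry out in full.
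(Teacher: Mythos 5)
Your proof is correct: the paper does not prove this theorem but quotes it from Matsushima, Lichnerowicz and Gauduchon, and your argument — identifying $\mf{h}_0$ with $\ker\m{D}^*\m{D}$ modulo constants and using the Weitzenb\"ock identity to see that the operator becomes real exactly when $\Scal(\omega,J)$ is constant, so that the kernel splits into real and imaginary parts corresponding to $\mf{k}_0$ and $J\mf{k}_0$ — is precisely the standard operator-theoretic proof given in those references. The only point to be careful about in a full write-up is the sign/index convention in the first-order term $g^{j\bar k}\partial_j \Scal\,\partial_{\bar k}f$, but this does not affect the conclusion that it is the sole obstruction to reality.
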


\subsection{The moment map flow}\label{subsec:momentmapflow}
The moment map flow is the geometric flow along the gradient of the norm squared of the moment map. It appears frequently in differential-geometric approaches to GIT \cite{Donaldson_scalarcurvatureprojembeddings,ChenSun_CalabiFlow,GeorgoulasRobbinSalamon_GITbook,DervanMcCarthySektnan}.
We explain some properties of existence and stability that will be used in the following sections.

Let $(V, \omega)$ be a K\"ahler manifold, which we do not assume is compact.
A vector field $\eta$ is \emph{Hamiltonian} with respect to $\omega$ if there exists a function $h \in C^\infty(V, \bb{R})$ such that 
\[
\omega (\eta, \cdot)= -\dd h.
\]
We say that $h$ is the \emph{Hamiltonian function} of $\eta$.
On a K\"ahler manifold $\eta = J\nabla_g(h)$, where $\nabla_gh$ is the Riemannian gradient of $h$.
A Hamiltonian vector field with Hamiltonian $h$ is also called the \emph{symplectic gradient} of $h$, and denoted by $\mrm{grad}^\omega h$.

\begin{definition}
Let $K$ be a Lie group that acts on $V$ by symplectomorphisms, i.e.\ for any $g \in K$, $g^*\omega = \omega$.
Let $\mf{k}$ be the Lie algebra of $K$.
For any element $\xi \in \mf{k}$, the \emph{infinitesimal action} of $\xi$ is the vector field
\[
\sigma_x(\xi) = \left.\frac{\dd}{\dd t}\right\vert_{t=0} \left( \mrm{exp}(-t\xi)\cdot x\right).
\]
We say that the action is Hamiltonian if there exists a \emph{moment map}
\[
\mu : V \to \mf{k}^*
\]
that is equivariant with respect to the $K$-action on $V$ and the co-adjoint $K$-action on the dual Lie algebra $\mf{k}^*$ and such that for each $x \in V$
\[
\dd_x \langle\mu,\xi \rangle = \omega(\cdot, \sigma_x(\xi)),
\]
i.e.\ $\langle\mu,\xi\rangle$ is a Hamiltonian function for the vector field $\sigma(\xi)$ on $V$.
We often use the notation $\sigma_\xi$ for $\sigma(\xi)$.
\end{definition}
It is clear from the definition of a Hamiltonian vector field that the Hamiltonian function is only unique up to a constant.
The moment map then chooses a Hamiltonian function for the infinitesimal vector field.
We next fix a bi-invariant inner product on $\mf{k}$ with rational coefficients on the centre of $\mf{k}$, and we use it to identify $\mf{k}$ and $\mf{k}^*$; therefore, we view the moment map as a map with values in $\mf{k}$. %is this the killing form?

\begin{definition}\label{def:momentmapflow}
Let $x \in V$.
The \emph{moment map flow} associated with the moment map $\mu$ with starting point $x$ is
\[
\frac{\dd}{\dd t} x_t = J\sigma(\mu(x_t))
\]
where $\sigma(\mu(x_t))$ is the infinitesimal vector field associated to the function $\mu(x_t)$ and $J$ is the almost complex structure of $V$.
\end{definition}
In particular, the moment map flow is the negative gradient flow of the norm squared of the moment map, i.e.
\[
J\sigma(\mu(x)) = -\frac{1}{2}\nabla \norm{\mu(x)}^2.
\]
We denote the flow starting at $x$ as $\phi_t(x)$.

\begin{proposition}[Stability of the flow]\label{prop:stabilityflow}
Assume that $\mu(0)=0$. Then there exists a neighbourhood $N(0) \subset V$ such that for all $x \in N(0) \subset V$ the moment map flow starting at $x$ remains in $V$.
\end{proposition}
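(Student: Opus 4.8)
The plan is to use $f := \tfrac12\norm{\mu}^2$ as a Lyapunov function and to bound the \emph{total length} of the flow lines emanating from points near $0$. By the identity recorded after Definition~\ref{def:momentmapflow}, the moment map flow is the negative gradient flow of $f$ with respect to the Riemannian metric $g$ underlying the K\"ahler structure of $V$: along $x_t = \phi_t(x_0)$ we have $\dot x_t = J\sigma(\mu(x_t)) = -\nabla f(x_t)$, hence
\[
\frac{\dd}{\dd t} f(x_t) = -\norm{\nabla f(x_t)}_g^2 \le 0 ,
\]
so $f$ is non-increasing and $f(x_t)\le f(x_0)$ on the interval of existence. Since $\mu(0)=0$, the origin is a global minimum of $f \ge 0$, hence a critical point. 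The subtlety is that monotonicity alone does not confine the flow: the sublevel sets $\{f \le \varepsilon\}$ need not be localised near $0$ (the zero set $\mu^{-1}(0)$ may be positive-dimensional and escape every compact subset of $V$), so a flow line could in principle drift along such a set and leave $V$. What rules this out is a uniform bound on the length of $x_{[0,\infty)}$ in terms of $f(x_0)$.

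To get such a bound I would use that $V$ is finite-dimensional and that, in the Dervan--Hallam construction \cite{DervanHallam}, the universal family --- and therefore the moment map $\mu$ --- depends real-analytically on the deformation parameter, so that $f$ is real-analytic on $V$ (if $V$ is singular one works in a smooth ambient space and invokes the {\L}ojasiewicz inequality for the restriction of an analytic function to an analytic subset). The {\L}ojasiewicz gradient inequality at the critical point $0$ then yields an open neighbourhood $U$ with $\overline{U} \Subset V$ and constants $C>0$, $\theta\in(0,\tfrac12]$ such that $f(x)^{1-\theta}\le C\,\norm{\nabla f(x)}_g$ for all $x\in U$. As long as the flow line remains in $U$, the usual computation $\tfrac{\dd}{\dd t} f(x_t)^\theta = -\theta f(x_t)^{\theta-1}\norm{\nabla f(x_t)}_g^2 \le -\tfrac{\theta}{C}\norm{\dot x_t}_g$ integrates to
\[
\mathrm{length}_g\big(x_{[0,T]}\big) = \int_0^T \norm{\dot x_t}_g\,\dd t \;\le\; \frac{C}{\theta}\,f(x_0)^\theta .
\]

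The final step is a continuity (bootstrap) argument. Fix $r_0>0$ with $\overline{B_g(0,2r_0)}\subset U$, and then $\delta\in(0,r_0)$ small enough that $\tfrac{C}{\theta}f(x_0)^\theta < r_0$ whenever $d_g(x_0,0)<\delta$; this is possible since $f$ is continuous and $f(0)=0$. Put $N(0):=B_g(0,\delta)$. For $x_0\in N(0)$, let $T^\ast\in(0,+\infty]$ be maximal such that the flow exists and stays in $B_g(0,2r_0)$ on $[0,T^\ast)$. For $t<T^\ast$ the flow line lies in $U$, so the length bound applies and
\[
d_g\big(\phi_t(x_0),0\big) \le d_g\big(\phi_t(x_0),x_0\big) + \delta \le \frac{C}{\theta}f(x_0)^\theta + \delta < r_0 + \delta < 2r_0 .
\]
Thus $x_{[0,T^\ast)}$ stays in the compact set $\overline{B_g(0,r_0+\delta)}\subset B_g(0,2r_0)\Subset V$; if $T^\ast$ were finite, the flow would extend past $T^\ast$ inside $B_g(0,2r_0)$, contradicting maximality. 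Hence $T^\ast=+\infty$, and for every $x_0\in N(0)$ the moment map flow starting at $x_0$ is defined for all $t\ge0$ and remains in $B_g(0,2r_0)\subset V$. (It then has finite length and converges in $V$, although only confinement is asserted here.)

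I expect the main difficulty to be the analytic input --- ensuring that $\mu$, hence $f$, is real-analytic so that the {\L}ojasiewicz gradient inequality is available on the possibly singular Kuranishi space --- together with the fact that the length bound must be \emph{uniform} over the fixed neighbourhood $N(0)$, since it is this uniformity that later permits running the flow simultaneously from all nearby deformations. The conceptual point is that the naive Lyapunov argument fails and must be replaced by the quantitative {\L}ojasiewicz length estimate.
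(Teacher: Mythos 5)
Your route is genuinely different from the paper's, and the mechanical parts of it are sound: the identification of the flow with the negative gradient flow of $\tfrac12\norm{\mu}^2$, the observation that Lyapunov monotonicity alone does not confine the trajectory (the sublevel sets of $\norm{\mu}^2$ need not be localised), the {\L}ojasiewicz length estimate, and the bootstrap confining the trajectory to $B_g(0,2r_0)$ are all correct as written, and in fact yield a conclusion stronger than the one asserted (uniform confinement to a small ball, finite length, convergence). The paper instead views $V$ as an open ball in an affine chart of $\bb{P}^d$, extends the $K$-action, the K\"ahler form and $\mu$ to $\bb{P}^d$ following \cite[Lemma 4.7]{DervanMcCarthySektnan}, and then imports compact-setting results: long-time existence, Kirwan's description of the limit set \cite[Theorem 4.16]{Kirwan_cohomologybook}, and Duistermaat's theorem \cite{Lerman_gradientflowmomentmap} that the flow converges to a single point with $x\mapsto x_\infty$ continuous; openness of the semistable stratum then localises the limit (and, via the deformation retraction, the trajectory) near the origin.

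The genuine gap in your argument is the analytic input you yourself flag: the {\L}ojasiewicz gradient inequality requires $\norm{\mu}^2$ to be real-analytic near $0$, and this is available neither from the hypotheses of Proposition~\ref{prop:stabilityflow} (which is stated for an arbitrary K\"ahler manifold $V$ with a Hamiltonian $K$-action and $\mu(0)=0$) nor from the paper's construction of the relevant moment maps. In the applications, $\mu$ is built from the potentials $\varphi_x$ produced by the implicit function theorem in Theorem~\ref{Thm:Kuranishi} and Lemma~\ref{Lemma:perturbation_Kuranishi_map}, which gives only \emph{smooth} dependence on $x$, and the Weil--Petersson form is extended off the integrable locus of the Kuranishi space in an unspecified (a priori only smooth) way; so ``$f$ is real-analytic'' is an unproved and nontrivial claim, not a routine verification. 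This is precisely what the paper's compactification buys: after extending to a linear action on $\bb{P}^d$ one is in a setting where the convergence and continuity results are already established, and no analyticity of the original $\mu$ on $V$ needs to be checked. To make your proof complete you would either have to establish real-analyticity of $\mu$ (e.g.\ via an analytic implicit function theorem in the constructions above, and an analytic extension of $\Omega_{WP}$), or replace the {\L}ojasiewicz step by the extension-to-$\bb{P}^d$ device.
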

\begin{proof}
Let us view $V$ as an open ball inside a projective space $\bb{P}^{d}$: if $[x_0: \dots: x_d]$ are homogeneous coordinates on $\bb{P}^{d}$, the space $V$ can be viewed as an open ball around the origin in the affine open subset $x_0=0$.
Following \cite[Lemma 4.7]{DervanMcCarthySektnan}, we can extend the action of $K$, the K\"ahler form $\omega$, and the moment map $\mu$ to $\bb{P}^d$ in such a way that their restriction to $V$ agrees with the original data.
This allows us to appeal to results proven in the compact setting.
Let $\mu_{\bb{P}^d}$ be the extended moment map.
The moment map flow associated with $\mu_{\bb{P}^d}$ exists for all time. Moreover \cite[Theorem 4.16]{Kirwan_cohomologybook} the limit set of the flow is a disjoint union of path-connected closed subsets where the norm square of the moment map has constant value. Let $C$ be one connected component and let $S_C$ be the stratum of points in $\bb{P}^d$ whose flow ends in $C$.
A result of Duistermaat \cite{Lerman_gradientflowmomentmap} guarantees that the flow starting at $x\in S_C$ converges to a single point $x_\infty$ and the map
\[
\begin{aligned}
[0,+\infty] \times S_C &\to S_C\\
(t,x) &\mapsto \phi_t(x)
\end{aligned}
\]
is a deformation retraction. In particular, the map
\begin{equation}\label{Eq:limit_flow_continuous}
x \mapsto x_\infty
\end{equation}
is continuous.
Let now $C$ be the connected component of the limit set that contains the origin of $V$; $S_C$ is open because the origin is a zero of the moment map, hence in particular is GIT-semistable by the Kempf--Ness theorem \cite{KempfNess}, and semistability is an open condition. Since $V$ is open there exists a neighbourhood $N'(0) \subset C$ contained in $V$.
From the continuity of the map \eqref{Eq:limit_flow_continuous}, we can find a neighbourhood $N(0)$ in $V$ such that for each $x \in N(0)$ the limit point $x_\infty \in N'(0)$.
\end{proof}

Let us fix the starting point $x \in V \cap N(0)$.
We next explain how to restrict the flow by projecting orthogonally to the stabiliser group of $x$.
Let $K_x$ be the stabiliser of $x$ and let $T$ be a maximal torus in the stabiliser, with $\mf{t}$ its Lie algebra.
Let
\begin{equation}\label{eq:Lie_algebra_orthogonal}
\mf{k}_{T^\perp} = \set{\xi \in \mf{k} \mid \langle \xi, h\rangle = 0 \ \text{for all}\ h \in \mf{t}}.
\end{equation}
In particular, one can prove that, assuming $\langle \cdot, \cdot\rangle$ is a rational inner product, the Lie algebra $\mf{k}_{T^\perp}$ corresponds to a subgroup $K_{T^\perp} < K$ and the same is true for its complexification \cite[\S5.5]{Szekelyhidi_book}.
Thus we can define a moment map for the action of $K_{T^\perp}$ by projecting
\[
\mu_{T^\perp} : V \to \mf{k} \to \mf{k}_{T^\perp},
\]
and we can consider the moment map flow associated with the moment map $\mu_{T^\perp}$ with starting point $x$:
\[
\frac{\dd}{\dd t} x_t = J\sigma(\mu_{T^\perp}(x_t))
\]
where $\sigma(\mu_{T^\perp}(x_t))$ is the infinitesimal vector field associated to the function $\mu_{T^\perp}(x_t)$.

\begin{proposition}[{\cite[Theorem 3.3]{GeorgoulasRobbinSalamon_GITbook}}]\label{Prop:existence_limit_flow}
The limit point $x_\infty \in V$ of the moment map flow starting at $x$ belongs to the closure of the orbit $K_{T^\perp}^{\bb{C}} \cdot x$ and  $\mu_{T^\perp}(x_\infty)$ belongs to the Lie algebra $(\mf{k}_{T^\perp})_{x_\infty}$ of the stabiliser of $x_\infty$ under the action of $K_{T^\perp}$.
\end{proposition}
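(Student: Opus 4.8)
The plan is to read off the statement from the gradient-flow description of the moment map flow, using the all-time existence and the convergence to a single limit that were already obtained in the proof of Proposition \ref{prop:stabilityflow}. Throughout I keep the starting point $x \in V \cap N(0)$ fixed as above, so the flow stays in $V$ and converges to a point $x_\infty \in V$. First I would record that $\mu_{T^\perp}$ is itself a moment map for the $K_{T^\perp}$-action: under the rationality assumption on $\langle\cdot,\cdot\rangle$, $K_{T^\perp}$ and $K_{T^\perp}^{\bb{C}}$ are honest subgroups, and the orthogonal projection of a moment map is again a moment map for the restricted action. Consequently $\tfrac{1}{2}\norm{\mu_{T^\perp}}^2$ is a well-defined smooth function on $V$ (real-analytic in the projective model), and differentiating it and using the identity $\dd_x\langle\mu_{T^\perp},\xi\rangle = \omega(\cdot,\sigma_x(\xi))$ gives $\nabla_g\tfrac{1}{2}\norm{\mu_{T^\perp}}^2 = -J\sigma(\mu_{T^\perp})$, i.e. the flow of Definition \ref{def:momentmapflow} for $\mu_{T^\perp}$ is precisely the negative gradient flow of $\tfrac{1}{2}\norm{\mu_{T^\perp}}^2$.

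For the orbit statement, the key observation is that the generating vector field $J\sigma(\mu_{T^\perp}(x_t))$ is, at each time $t$, tangent to the $K_{T^\perp}^{\bb{C}}$-orbit through $x_t$: for any $\xi \in \mf{k}_{T^\perp}$ the real infinitesimal action $\sigma(\xi)$ is tangent to the $K_{T^\perp}$-orbit, and applying $J$ lands in the tangent space of the complexified orbit, whose real tangent space at a point is spanned by the vectors $\sigma(\xi)$ and $J\sigma(\xi)$. Since $K_{T^\perp}^{\bb{C}}\cdot x$ is an immersed complex submanifold of $V$ and the flow vector field is tangent to it, the integral curve starting at $x$ remains inside $K_{T^\perp}^{\bb{C}}\cdot x$ for all $t\geq 0$; letting $t\to\infty$ yields $x_\infty \in \overline{K_{T^\perp}^{\bb{C}}\cdot x}$.

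For the infinitesimal-stabiliser statement, I would invoke that a gradient flow of a real-analytic function converges to a critical point of that function -- which is exactly the mechanism by which convergence to the single limit $x_\infty$ was obtained in the proof of Proposition \ref{prop:stabilityflow}, after extending all data to $\bb{P}^d$. Hence $\nabla_g\norm{\mu_{T^\perp}}^2(x_\infty)=0$, i.e. $J\sigma_{x_\infty}(\mu_{T^\perp}(x_\infty))=0$, and since $J$ is an isomorphism, $\sigma_{x_\infty}(\mu_{T^\perp}(x_\infty))=0$. By definition $\Lie\big((K_{T^\perp})_{x_\infty}\big) = (\mf{k}_{T^\perp})_{x_\infty} = \set{\xi \in \mf{k}_{T^\perp} \mid \sigma_{x_\infty}(\xi)=0}$, so this says precisely $\mu_{T^\perp}(x_\infty) \in (\mf{k}_{T^\perp})_{x_\infty}$, as claimed.

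The hard part will be, as usual in the non-compact setting, making these two ingredients fully rigorous. The convergence of the gradient flow to an \emph{actual} critical point, rather than mere accumulation on the critical set of $\norm{\mu_{T^\perp}}^2$, genuinely requires passing to the compact projective model and using the real-analyticity / Duistermaat-type results cited in the proof of Proposition \ref{prop:stabilityflow}; and the orbit-staying argument must be run on $K_{T^\perp}^{\bb{C}}\cdot x$ regarded as a manifold in its own right, since the orbit is only immersed -- one checks that the flow vector field pulls back to a well-defined vector field there and then uses uniqueness of integral curves. This last point also explains why only the \emph{closure} of the orbit, and not the orbit itself, can appear in the conclusion. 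Everything else is routine bookkeeping with the moment map identities.
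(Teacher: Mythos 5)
The paper does not prove this proposition but cites it verbatim as \cite{GeorgoulasRobbinSalamon_GITbook}, Theorem 3.2; your argument is precisely the standard proof of that result --- tangency of $J\sigma(\mu_{T^\perp})$ to the complexified orbit for the first claim, and the identification of critical points of $\norm{\mu_{T^\perp}}^2$ with points where $\sigma_{x_\infty}(\mu_{T^\perp}(x_\infty))=0$ for the second --- and it is correct. The only delicate points are exactly the ones you flag yourself: convergence to a genuine critical point (which rests on the projective compactification and the Duistermaat/\L{}ojasiewicz argument already invoked in Proposition \ref{prop:stabilityflow}) and running the uniqueness-of-integral-curves argument on the immersed orbit, which is why only the closure of the orbit appears in the conclusion.
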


Moreover, for the action of $K_{T^\perp}$, the point $x$ has now discrete stabiliser, and we can state the following result \cite[Corollary 4.14]{DervanMcCarthySektnan}.
\begin{proposition}\label{Prop:not_in_orbit}
Let $x_\infty$ be the limit of the moment map flow starting at $x$. Then exactly one of the following holds:
\begin{enumerate}
\item \label{List:theta0} $\mu_{T^\perp}(x_\infty)= 0$ and $x_\infty \in K_{T^\perp}^{\bb{C}}\cdot x$;
\item \label{List:thetanegative} $x_\infty \notin K_{T^\perp}^{\bb{C}}\cdot x$ and there exists a point $\widetilde{x}\in V$ and an element $\xi \ne 0$ in $ \mf{k}_{T^\perp}$ such that
\[
\lim_{t\to \infty} \exp(-it\xi) \cdot x = \widetilde x
\]
and $\langle\mu_{T^\perp} (\widetilde{x}), \xi\rangle\ge 0$.
Further, when $\langle\mu_{T^\perp} (\widetilde{x}), \xi\rangle =0$, then $\widetilde x=x_\infty$ and  $\mu_{T^\perp}(x_\infty)= 0$.
\end{enumerate}
\end{proposition}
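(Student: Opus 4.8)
The plan is to read off the dichotomy from the convergence properties of the moment map flow (Proposition~\ref{Prop:existence_limit_flow}) together with the classical Kempf--Ness correspondence, applied to the reduced group $K_{T^\perp}$. First I would observe that the whole flow line $\phi_t(x)$ stays inside the single orbit $\m{O}:=K_{T^\perp}^{\bb{C}}\cdot x$, because at each time its velocity $J\sigma(\mu_{T^\perp}(\phi_t(x)))$ lies in the real span of the vectors $\sigma_\eta(\phi_t(x))$ and $J\sigma_\eta(\phi_t(x))$, $\eta\in\mf{k}_{T^\perp}$, which is the tangent space of $\m{O}$ at that point. By Proposition~\ref{Prop:existence_limit_flow} the flow converges to $x_\infty\in\overline{\m{O}}$ at which $\mu_{T^\perp}(x_\infty)$ lies in the Lie algebra $(\mf{k}_{T^\perp})_{x_\infty}$ of the $K_{T^\perp}$-stabiliser. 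The first clauses of the two alternatives are mutually exclusive, so it remains to establish (1) when $x_\infty\in\m{O}$ and (2) when $x_\infty\notin\m{O}$.

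Assume $x_\infty\in\m{O}$. On the orbit $\m{O}$ the moment map flow is the downward gradient flow of the Kempf--Ness (log-norm) functional $\Psi_x$, which is convex along the geodesic rays $s\mapsto\exp(is\eta)\cdot x$, $\eta\in\mf{k}_{T^\perp}$, and whose critical points are exactly the points at which $\mu_{T^\perp}$ vanishes. Since a negative gradient flow of a geodesically convex function that converges to a point of its domain necessarily converges to a critical point, we get $\mu_{T^\perp}(x_\infty)=0$; together with $x_\infty\in\m{O}=K_{T^\perp}^{\bb{C}}\cdot x$ this is precisely alternative~(1).

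Assume now $x_\infty\notin\m{O}$, so that $\Psi_x$ does not attain its infimum on $\m{O}$ and $x$ is not polystable for $K_{T^\perp}^{\bb{C}}$. Here I would invoke Kempf's construction of an optimal destabilising one-parameter subgroup, in its moment-map formulation as in \cite{GeorgoulasRobbinSalamon_GITbook,ChenSun_CalabiFlow,DervanMcCarthySektnan}: the asymptotics of the path in $K_{T^\perp}^{\bb{C}}$ realising the flow single out a nonzero $\xi\in\mf{k}_{T^\perp}$ for which $\lim_{t\to\infty}\exp(-it\xi)\cdot x=:\widetilde x$ exists in $V$ and for which $\mu_{T^\perp}(\widetilde x)$ is a non-negative multiple of $\xi$; in particular $\langle\mu_{T^\perp}(\widetilde x),\xi\rangle\ge 0$, in agreement with the monotonicity of $t\mapsto\langle\mu_{T^\perp}(\exp(-it\xi)\cdot x),\xi\rangle$ and its convergence to $\langle\mu_{T^\perp}(\widetilde x),\xi\rangle$. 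In the borderline case $\langle\mu_{T^\perp}(\widetilde x),\xi\rangle=0$ one gets $\mu_{T^\perp}(\widetilde x)=0$, so $\overline{\m{O}}$ contains a zero of $\mu_{T^\perp}$; then $x$ is semistable, the moment map flow from $x$ converges to a zero of $\mu_{T^\perp}$, whence $\mu_{T^\perp}(x_\infty)=0$, and by the uniqueness modulo $K_{T^\perp}$ of such a zero in $\overline{\m{O}}$ — with both $\widetilde x$ and $x_\infty$ arising as limits of the same degeneration of $x$ — one has $\widetilde x=x_\infty$. This is the content of the second alternative.

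The technical heart is the non-polystable case: constructing the algebraic one-parameter subgroup $\xi$ out of the transcendental flow and controlling the sign of $\langle\mu_{T^\perp}(\widetilde x),\xi\rangle$ — the content of Kempf's optimality theorem. The additional point requiring care compared to \cite{DervanMcCarthySektnan} is the presence of nontrivial automorphisms: one works throughout with $K_{T^\perp}$ rather than $K$ and holds the maximal torus $T$ of the stabiliser fixed, cf.~\eqref{eq:Lie_algebra_orthogonal}, which is exactly what makes the stabiliser of $x$ discrete and the arguments above available.
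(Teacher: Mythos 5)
The paper offers no independent proof of this proposition: it is stated as a direct citation of \cite[Corollary 4.14]{DervanMcCarthySektnan} (building on \cite{GeorgoulasRobbinSalamon_GITbook}) applied to the reduced group $K_{T^\perp}$, for which the starting point has discrete stabiliser. Your sketch reconstructs precisely the content of that cited dichotomy and correctly identifies the essential point — that projecting onto $\mf{k}_{T^\perp}$ kills the continuous stabiliser so that $(\mf{k}_{T^\perp})_{x_\infty}=0$ on the orbit — so it is consistent with, and somewhat more detailed than, the paper's treatment.
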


\subsection{K-stability}
We recall the notion of K-stability, introduced by Tian \cite{Tian_K-stability_Fano} and Donaldson \cite{Donaldson_1StabilityToric} to be an analogue for polarised varieties to the Hilbert-Mumford criterion for GIT.
\begin{definition}
Let $(X,L)$ be a polarised variety of dimension $n$. A test configuration for $(X,L)$ is the data of
\begin{enumerate}
\item a variety $\m{X}$ with a $\bb{C}^*$-equivariant flat proper morphism to $\bb{C}$;
\item a relatively ample line bundle $\m{L} \to \m{X}$ together with a lift of the $\bb{C}^*$-action to it;
\item an isomorphism $(\m{X}_1, \m{L}_1) \simeq (X, L^r)$ for some $r>0$.
\end{enumerate}
We say that $(\m{X},\m{L})$ is a product test configuration if $(\m{X}, \m{L}) \simeq (X,L^r) \times \bb{C}$, with a possibly nontrivial $\bb{C}^*$-action, and is a trivial test configuration if $(\m{X}, \m{L}) \simeq (X,L) \times \bb{C}$ with trivial $\bb{C}^*$-action.
\end{definition}
Given a test configuration, one associates a numerical invariant as follows. Consider the following expansions for the dimension of $H^0(\m{X}, \m{L}_0^j)$ and for the weight $w_j$ of the induced action of $\bb{C}^*$ on $H^0(\m{X}, \m{L}_0^j)$:
\[
\begin{aligned}
\dim H^0(\m{X}, \m{L}_0^j) &= a_0j^n + a_{1}j^{n-1} + O\left(j^{n-2}\right)\\
w_j &= b_0 j^{n+1} + b_1j^{n} + O\left(j^{n-1}\right).
\end{aligned}
\]
The \emph{Donaldson-Futaki} invariant, introduced by Donaldson in \cite{Donaldson_1StabilityToric}, is the number
\[
\mrm{DF}(\m{X}, \m{L}) := \frac{a_1b_0-a_0b_1}{a_0^2}.
\]
The following result \cite{Donaldson_1StabilityToric} (see also \cite{Legendre_localizingDFinvariant} for a generalisation) relates the Donaldson-Futaki invariant with the classical Futaki invariant
\[
\mrm{Fut}(\sigma_h) = \int_X (\Scal(\omega)-\widehat{S})h \omega^n,
\]
where $\sigma_h \in \mf{h}_0$ and $h$ is its holomorphy potential.
\begin{theorem}\label{Thm:classical_Futaki_equal_DF}
When the central fibre is smooth, the Donaldson-Futaki invariant satisfies
\[
\frac{\mrm{DF}(\m{X}, \m{L})}{n!} = -\pi \mrm{Fut}(\sigma_h),
\]
where $\sigma_h$ is the holomorphic vector field generating the $\bb{C}^*$-action.
\end{theorem}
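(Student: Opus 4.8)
The plan is to reduce the identity to a Riemann--Roch computation on the central fibre. Since $\m{X}_0$ is smooth it is a polarised projective manifold of dimension $n$ carrying a holomorphic $\bb{C}^*$-action and a $\bb{C}^*$-linearised ample line bundle $\m{L}_0 = \m{L}\vert_{\m{X}_0}$, and $H^0(\m{X},\m{L}_0^j) = H^0(\m{X}_0,\m{L}_0^j)$ is the induced finite-dimensional representation, with total weight $w_j$. The generating vector field $\sigma_h$ lifts to $\m{L}_0$, hence lies in $\Lie\Aut(\m{X}_0,\m{L}_0)$ and admits a holomorphy potential, so $\mrm{Fut}(\sigma_h)$ is well defined on $\m{X}_0$ (and the intersection numbers entering $\widehat{S}$ agree for $X$ and $\m{X}_0$ by flatness). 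I would then compute the leading coefficients $a_0,a_1,b_0,b_1$ by (equivariant) Hirzebruch--Riemann--Roch and substitute into $\mrm{DF}(\m{X},\m{L}) = (a_1b_0-a_0b_1)/a_0^2$.

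For the dimensions, Serre--Kodaira vanishing gives $\dim H^0(\m{X}_0,\m{L}_0^j) = \chi(\m{X}_0,\m{L}_0^j) = \int_{\m{X}_0}e^{jc_1(\m{L}_0)}\mrm{Td}(\m{X}_0)$ for $j\gg 0$. Fixing an $S^1$-invariant K\"ahler form $\omega$ in $c_1(\m{L}_0)$ with Ricci form $\rho$, the first two coefficients are, up to a universal normalisation constant $\kappa$ (a power of $2\pi$), $a_0 = \tfrac{\kappa}{n!}\int_{\m{X}_0}\omega^n$ and $a_1 = \tfrac{\kappa}{2(n-1)!}\int_{\m{X}_0}\rho\wedge\omega^{n-1}$; I will also use that, $\widehat{S}$ being topological, $\widehat{S}\int\omega^n = n\int\rho\wedge\omega^{n-1}$.

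The main step is the weight. Writing $w_j = \tfrac{\dd}{\dd t}\big|_{t=1}\mrm{tr}\bigl(t\mid H^0(\m{X}_0,\m{L}_0^j)\bigr)$, which for $j\gg0$ equals the equivariant holomorphic Euler characteristic by equivariant Serre vanishing, I would apply the equivariant Hirzebruch--Riemann--Roch theorem. With $h$ the holomorphy potential of $\sigma_h$ with respect to $\omega$, normalised by the linearisation so that $h$ equals the $S^1$-weight at each fixed point, the equivariant Chern character of $\m{L}_0$ contributes the moment $h$, while the $\bb{C}^*$-moment of $T\m{X}_0$ appearing in the equivariant Todd class is a Laplacian-type function which integrates to zero against $\omega^n$. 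Differentiating the resulting integral at $t=1$ and reading off the top two coefficients yields, with the same constant $\kappa$, $b_0 = \tfrac{\kappa}{n!}\int_{\m{X}_0}h\,\omega^n$ and $b_1 = \tfrac{\kappa}{2(n-1)!}\int_{\m{X}_0}h\,\rho\wedge\omega^{n-1}$. (These two expansions can equivalently be read off from the $S^1$-equivariant refinement of the Tian--Yau--Zelditch--Catlin asymptotics of the Bergman kernel of $\m{L}_0^j$.)

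Substituting into $\mrm{DF}(\m{X},\m{L}) = (a_1b_0-a_0b_1)/a_0^2$ and using $\Scal(\omega)\,\omega^n = n\,\rho\wedge\omega^{n-1}$ together with the identity for $\widehat{S}$, the numerator $a_1b_0-a_0b_1$ is seen to be a universal multiple of $a_0^2\int_{\m{X}_0}\bigl(\Scal(\omega)-\widehat{S}\bigr)h\,\omega^n = a_0^2\,\mrm{Fut}(\sigma_h)$; carefully tracking the Chern--Weil factors of $2\pi$ then pins the multiple down to $-\pi/n!$ and gives the claimed formula. I expect the main obstacle to be the equivariant Riemann--Roch computation of $b_1$ --- equivalently, the subleading term in the equivariant asymptotic Riemann--Roch --- and the bookkeeping of the normalisation constants, which is precisely what produces the factor $\pi$; the reduction to $\m{X}_0$ and the non-equivariant expansion are routine.
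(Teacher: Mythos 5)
The paper itself gives no proof of this statement: it is quoted from Donaldson (with Legendre cited for a generalisation), so there is no internal argument to compare against. Your proposal reconstructs the standard proof from the literature, which is exactly Donaldson's route: restrict to the smooth central fibre, get $a_0,a_1$ from Hirzebruch--Riemann--Roch plus Serre/Kodaira vanishing, get $b_0,b_1$ from equivariant Riemann--Roch (equivalently equivariant Bergman kernel asymptotics) with the Hamiltonian normalised by the linearisation, observe that the equivariant Todd correction is a divergence term integrating to zero, and substitute into $\mrm{DF}=(a_1b_0-a_0b_1)/a_0^2$. That outline, including the reduction to $\m{X}_0$ and the vanishing of the $\Delta h$ term, is correct and is the intended argument behind the cited result.

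One slip in the final bookkeeping: with the paper's unnormalised Futaki invariant $\mrm{Fut}(\sigma_h)=\int_X(\Scal(\omega)-\widehat{S})h\,\omega^n$, the numerator is a universal multiple of $a_0\cdot\mrm{Fut}(\sigma_h)$, not of $a_0^2\,\mrm{Fut}(\sigma_h)$. Indeed, using $n\rho\wedge\omega^{n-1}=\Scal(\omega)\,\omega^n$ and your identity $n\int\rho\wedge\omega^{n-1}=\widehat{S}\int\omega^n$, one gets $a_1b_0-a_0b_1=-\frac{\kappa\kappa'}{2(n!)^2}\bigl(\int\omega^n\bigr)\mrm{Fut}(\sigma_h)$, hence $\mrm{DF}=-\frac{\kappa'}{2\kappa}\,\mrm{Fut}(\sigma_h)\big/\int\omega^n$: a volume factor survives, so no choice of the $2\pi$-conventions alone produces a volume-free constant $-\pi/n!$. (This tension is already visible in the statement as quoted: $\mrm{DF}$ is invariant under $L\mapsto L^r$ while the unnormalised $\mrm{Fut}$ scales like $r^n$, so the clean constant requires a normalisation convention, e.g.\ dividing by the volume or using the normalised measure, as in Donaldson's formulation.) Since the paper only ever uses this identity up to a positive multiplicative constant --- all that matters in the proofs of Theorems \ref{Thm:rel_stability_osc} and \ref{Th:Sz_stability} is the sign and the vanishing locus --- this does not affect anything downstream, but your last step as written does not actually pin down the constant claimed in the statement.
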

If $(\m{X}, \m{L})$ is a test configuration, one can define the \emph{normalization} of $(\m{X}, \m{L})$ by taking the normalization $\widetilde{\m{X}}$ of $\m{X}$ and the pullback of $\m{L}$ to $\widetilde{\m{X}}$. The normalization is again a test configuration for $(X, L)$ \cite[\S 5]{RossThomas_HilbertMumford}.
\begin{definition}[\cite{Donaldson_1StabilityToric,Tian_KahlerEinstein}]
A polarised variety $(X, L)$ is
\begin{enumerate}
\item \emph{K-semistable} if $\mrm{DF}(\m{X}, \m{L}) \ge 0$ for all test configurations $(\m{X}, \m{L})$ for $(X, L)$;
\item \emph{K-polystable} if it is K-semistable and $\mrm{DF}(\m{X}, \m{L})$ vanishes only if $(\m{X}, \m{L})$ normalises to a product test configuration;
\item \emph{K-stable} if it is K-semistable and $\mrm{DF}(\m{X}, \m{L})$ vanishes only if $(\m{X}, \m{L})$ normalises to the trivial test configuration.
\end{enumerate}
\end{definition}

\subsection{Deformation theory of K\"ahler metrics with constant scalar curvature}
\label{Sec:Sz_def_theory}
The deformation theory of cscK manifolds was developed by Sz{\'e}kelyhidi \cite{Szekelyhidi_deformations}, Br\"onnle \cite{Bronnle_PhDthesis}, and more recently by Dervan--Hallam \cite{DervanHallam} with a different approach; in this section we explain and combine the two approaches.

Let $(M, L)$ be a polarised K\"ahler manifold and let $\omega$ be a fixed K\"ahler form in $c_1(L)$.
Define the infinite-dimensional manifold $\scr{J}$ of almost complex structures compatible with $\omega$.
The tangent space at a point $J \in \scr{J}$ can be identified with
\[
T_J^{0,1}\scr{J} = \set*{\left. \alpha \in \Omega^{0,1}(T^{1,0}M)\right\vert \omega(\alpha(u), v) + \omega(u, \alpha(x)) = 0}.
\]

Let $\scr{G}$ be the group of Hamiltonian symplectomorphisms of $(M, \omega)$. Then $\scr{G}$ acts on $\scr{J}$ by pull-back.
The Lie algebra of $\scr{G}$ can be identified with the space of smooth functions on $M$ with mean value zero, denoted $C_0^\infty(M, \bb{R})$.

We fix $J \in \scr{J}$ an integrable complex structure on $(M, \omega)$ such that $\Scal(\omega, J)$ is constant.
The infinitesimal action of $\scr{G}$ on $\scr{J}$ is defined by the operator
\begin{equation}\label{Eq:map_P}
\begin{aligned}
P : C_0^\infty(M, \bb{R}) &\to T_J\scr{J} \\
h &\mapsto \m{L}_{\eta_h} J,
\end{aligned}
\end{equation}
where $\eta_h$ is the Hamiltonian vector field with function $h$; let $P^{\bb{C}}$ be its extension to $C_0^\infty(M, \bb{C})$.

The deformations of the complex structure $J$ which are integrable to first-order are parametrised by the finite-dimensional vector space
\begin{equation}\label{Eq:H_tilde_cscK}
\widetilde{H}^1 = \ker \left(PP^*+(\delbar^*\delbar)^2\right)
\end{equation}
on $T_{J}\scr{J}$.
Consider the group of Hamiltonian isometries of $(M, \omega, J)$, denoted by $K$: it is the stabiliser of the complex structure $J$ for the action of $\scr{G}$ which means that, by definition, it is the intersection of $\scr{G}$ with $\mrm{Aut}(M, J)$.

The group $K$ can be complexified and from Matsushima's criterion (Theorem \ref{Thm:Matsushima_criterion}) its complexification is $\mrm{Aut}(M, L)$.
In particular, we can realise the Lie algebra of $K$, denoted $\mf{k}$, as the space of real holomorphy potentials with respect to $(\omega, J)$ with mean-value zero; it follows that the Lie algebra of $K$ can be identified with the kernel of $P$.
Changing the K\"ahler metric $\omega$ to $\omega+ i\del\delbar\varphi$ changes a holomorphy potential $h$ to
\[
h + \langle \nabla h, \nabla \varphi \rangle,
\]
see \cite[Lemma 4.10]{Szekelyhidi_book}.
Given $\xi \in \mf{k}$, we denote by $\sigma_\xi$ the corresponding infinitesimal vector field and by $h_\xi$ the corresponding holomorphy potential. 

The following theorem describes the deformations of the cscK complex structure $J$ and their scalar curvature.
In the following, when $J_x\in \scr{J}$ is a non-integrable almost complex structure, we denote by $g_x$ the Hermitian metric induced by $(\omega, J_x)$ and by $\Scal(\omega, J_x)$ its Chern-scalar curvature.
Moreover, for a function $\varphi$ in the Sobolev space $W^{2, p}(g_x)$ such that $\norm{\varphi}_{W^{2,p}} \ll 1$, let
\[
\omega_\varphi := \omega +\frac{1}{2} (\dd\dd^c\varphi)^{1,1},
\]
where the $(1,1)$-part is taken with respect to $J_x$ and $\dd\dd^c = -\dd J_x  \dd$.
While in general $\omega_\varphi$ is not closed, it satisfies the compatibility conditions
\[
\begin{aligned}
\omega_\varphi (J_x \cdot, J_x \cdot) = \omega_\varphi(\cdot,\cdot),\\
\omega_\varphi (\cdot, J_x \cdot) >0,
\end{aligned}
\]
thus $(\omega_\varphi, J_x)$ induces a Hermitian metric.
When $J_x$ is integrable, then $\omega_\varphi$ is closed and coincides with $\omega + i\del\delbar\varphi$.
%\[
%\omega_\varphi := \omega -\frac{1}{2} \dd J_x\dd\varphi.
%\]
%While in general $\omega_\varphi$ is not compatible with $J_x$, it satisfies
%\[
%\begin{aligned}
%\dd \omega_\varphi = 0,\\
%\omega_\varphi (\cdot, J_x \cdot) >0,
%\end{aligned}
%\]
%thus $(\omega_\varphi, J_x)$ induces a Hermitian metric.
%When $J_x$ is integrable, then $\omega_\varphi$ is closed and coincides with $\omega + i\del\delbar\varphi$.

\begin{theorem}\label{Thm:Kuranishi}
There exists a ball around the origin $V \subset \widetilde{H}^1$ and a $K$-equivariant holomorphic map
\begin{equation}\label{Eq:Kuranishi_map}
\Phi : V \to \scr{J}
\end{equation}
such that $\Phi (0) = J$ and
\begin{enumerate}
\item if two points $x$ and $x'$ of $V$ are in the same orbit for the complexified action of $K$, and $\Phi(x)$ is integrable, then their images $\Phi(x)$ and $\Phi(x')$ are isomorphic;
\item for every $x\in V$ there exists a smooth function $\varphi_x$ on $M$ and $\xi \in \mf{k}$ such that
\[
\Scal(\omega_{\varphi_x}, \Phi(x)) - \widehat{S} =  h_\xi + \langle \nabla_{g_x} h_\xi, \nabla_{g_x} \varphi_x\rangle_{g_x},
\]
where $\varphi_x$ varies smoothly with $x$.
\end{enumerate}
\end{theorem}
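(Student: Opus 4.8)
The plan is to obtain $\Phi$ from the classical Kuranishi construction and then to produce the K\"ahler potential $\varphi_x$ of part~(2) by an implicit function theorem argument; the cscK hypothesis on $(\omega,J)$ enters precisely in making the relevant map vanish at the base point and in linearising the scalar curvature to the Lichnerowicz operator. First I would construct $\Phi$. Passing to H\"older completions, parametrise a neighbourhood of $J$ in $\scr J$ by sections $\alpha\in\Omega^{0,1}(T^{1,0}M)$ obeying the symmetry constraint in the description of $T_J\scr J$; integrability of the associated almost complex structure $J_\alpha$ is a quadratic equation $N(\alpha)=0$ with $N$ a Nijenhuis-type operator. Let $G$ be the Green operator of the elliptic self-adjoint operator $PP^*+(\delbar^*\delbar)^2$, whose kernel is $\widetilde H^1$, and solve a fixed-point equation $\alpha=\iota(x)+Q(\alpha)$ for $x$ in a small ball $V\subset\widetilde H^1$, where $\iota\colon\widetilde H^1\hookrightarrow\Omega^{0,1}(T^{1,0}M)$ is the inclusion and $Q$ is built from $G$ and the quadratic part of $N$. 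The contraction principle yields a unique small solution $\alpha(x)$, and because $\iota$, $G$ and the differentiations involved are $\bb C$-linear while the quadratic term is $\bb C$-bilinear, $x\mapsto\alpha(x)$ is holomorphic; setting $\Phi(x)=J_{\alpha(x)}$ gives $\Phi(0)=J$, while $K$-equivariance of $\Phi$ follows from $K$-equivariance of $\delbar$, $P$, the metric and hence of $G$. Elliptic regularity upgrades $\Phi$ to a smooth (indeed real-analytic) family of smooth almost complex structures.

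Part~(1) is the \emph{Kuranishi slice} property. The tangent space to the $\scr G$-orbit of $J$ is $\operatorname{im}P$, which is $L^2$-orthogonal to $\widetilde H^1$, so $\Phi(V)$ is a local slice for the $\scr G$-action, and the construction intertwines the formal complexified infinitesimal action $P^{\bb C}$ on the slice with biholomorphic identifications of the integrable members of the family. Concretely, if $x,x'\in V$ lie in a common $K^{\bb C}$-orbit and $\Phi(x)$ is integrable, integrating $P^{\bb C}$ produces a biholomorphism $\Phi(x)\cong\Phi(x')$; the precise form of this slice package --- in particular that inside the slice two integrable points are $\scr G^{\bb C}$-equivalent exactly when they lie in one $K^{\bb C}$-orbit, together with the identification $K^{\bb C}\cong\Aut(M,L)$ from Theorem~\ref{Thm:Matsushima_criterion} --- I would quote from \cite{Szekelyhidi_deformations,Bronnle_PhDthesis,DervanHallam} rather than reprove.

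For part~(2) I would run an implicit function theorem argument in the spirit of \cite{DervanHallam}. Let $\mf k^M\subset C^\infty(M,\bb R)$ be the real holomorphy potentials for $(\omega,J)$ --- equivalently the real kernel of the Lichnerowicz operator $\m D=\delbar\circ\nabla^{1,0}$, since a real holomorphy potential generates a holomorphic Hamiltonian, hence Killing, vector field --- and let $\mf k^\perp$ be its $L^2(\omega)$-orthogonal complement in $C^\infty(M,\bb R)$; recall from \cite[Lemma~4.10]{Szekelyhidi_book} that $h\mapsto h+\langle\nabla h,\nabla\varphi\rangle$ identifies $\mf k^M$ with $\mf k^M_\varphi$. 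In suitable H\"older completions define
\[
F(x,\varphi,h)=\Scal\bigl(\omega+i\del\delbar\varphi,\ \Phi(x)\bigr)-\widehat S-h-\langle\nabla h,\nabla\varphi\rangle,\qquad x\in V,\ \varphi\in\mf k^\perp,\ h\in\mf k^M,
\]
where $\Scal$ is the Hermitian scalar curvature of the Chern connection of $\Phi(x)$, and solve $F=0$ modulo constants, the remaining constant being matched automatically by the cohomological normalisation of the total Hermitian scalar curvature. Since $(\omega,J)$ is cscK one has $F(0,0,0)=0$, and the partial derivative of $F$ in $(\varphi,h)$ at $(0,0,0)$ is $(\dot\varphi,\dot h)\mapsto-\m D^*\m D\dot\varphi-\dot h$: the lower-order correction to the linearised scalar curvature drops out because $\Scal(\omega,J)=\widehat S$ is constant, and the derivative of $\langle\nabla h,\nabla\varphi\rangle$ at the origin vanishes. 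As $\m D^*\m D$ is elliptic and self-adjoint with real kernel $\mf k^M$, it restricts to an isomorphism of $\mf k^\perp$ onto $\mf k^\perp$, so $(\dot\varphi,\dot h)\mapsto-\m D^*\m D\dot\varphi-\dot h$ is a Banach-space isomorphism from $\mf k^\perp\oplus\mf k^M$ onto $C^\infty(M,\bb R)$ in the chosen completions. The implicit function theorem then gives, for $x$ in a possibly smaller ball $V$, unique small $(\varphi_x,h_x)$ with $F(x,\varphi_x,h_x)=0$, that is $\Scal(\omega+i\del\delbar\varphi_x,\Phi(x))-\widehat S=h_x+\langle\nabla h_x,\nabla\varphi_x\rangle\in\mf k^M_{\varphi_x}$, varying smoothly (indeed real-analytically) with $x$; uniqueness together with $K$-equivariance of $F$ forces $(\varphi_x,h_x)$ to be $K$-equivariant, elliptic regularity makes $\varphi_x$ smooth on $M$, and $\omega+i\del\delbar\varphi_x$ is K\"ahler for $x$ small.

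The genuinely delicate step is part~(1): producing a $\Phi$ that is simultaneously holomorphic, a local $\scr G$-slice, and compatible with the complexified action so that $K^{\bb C}$-equivalent parameters yield isomorphic complex structures requires the full Kuranishi slice machinery, which I would invoke from the references rather than reconstruct. Part~(2) is by contrast a clean implicit function theorem argument; its only subtlety is the $\varphi$-dependence of the target space $\mf k^M_\varphi$, which is absorbed by the reparametrisation $h\mapsto h+\langle\nabla h,\nabla\varphi\rangle$, after which surjectivity of the linearised operator is exactly the statement that $(\omega,J)$ is cscK.
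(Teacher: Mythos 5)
Your proposal is correct and follows essentially the same route as the paper: part (1) is quoted from the Kuranishi/Chen--Sun slice machinery, and part (2) perturbs the K\"ahler potential (so that $\Phi$ stays holomorphic) via an implicit function theorem whose linearisation is the Lichnerowicz operator, an isomorphism off $\mf{k}^M$ precisely because $(\omega,J)$ is cscK. The only difference is packaging: you invoke the parametrised implicit function theorem once at $(0,0,0)$, whereas the paper applies a quantitative version at each $x$ and devotes most of its proof to showing that the right inverses of the $x$-dependent linearisations $-\m{D}_x^*\m{D}_x(\cdot)-h_\xi-\langle\nabla_{g_x}h_\xi,\nabla_{g_x}\cdot\rangle_{g_x}$ are bounded uniformly in $x$ (via uniform Poincar\'e and Schauder estimates), thereby making explicit the uniformity of the ball $V$ that the parametrised statement conceals.
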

\begin{proof}
The first claim is mainly due to Kuranishi \cite{Kuranishi_family_cpx_str}; a proof adapted to take into account the compatibility with the symplectic form can be found in \cite[\S6]{ChenSun_CalabiFlow}.
We prove the second claim. For each $x \in V$, let $g_x$ be the Hermitian metric induced by $\omega$ and $\Phi(x)$. Consider the operator

\[
\begin{aligned}
G_x:\mf{k} \times W^{2,p+4}(g_x) &\to W^{2,p}(g_x)\\
(\xi, \varphi) &\mapsto\Scal(\omega_\varphi, \Phi(x)) - \widehat{S} - h_\xi - \langle \nabla_{g_x} h_\xi, \nabla_{g_x} \varphi \rangle_{g_x}.
\end{aligned}
\]
By our assumption, $G_0(0,0)=0$.
To compute the linearisation of $G_x$ at $(0, 0)$ in the $\varphi$ variable we use the expression of the first variation of the Chern scalar curvature \cite[Proposition 3.6]{AngellaPediconi_ChernScal} in the direction of $(\dd\dd^c\varphi)^{1,1}$.
We first observe that, when $\dd^c = -\Phi(x) \circ \dd$, i.e.\ when we consider the almost complex structure $\Phi(x)$, then
\[
\dd\dd^c\varphi = (\dd\dd^c\varphi)^{1,1} + \frac{1}{2}\dd\varphi \circ N_x,
\]
where $N_x$ is the Nijenhuis tensor of $\Phi(x)$.
Then the linearisation of $G_x$ is the operator\footnote{More precisely, this is computed by applying \cite[Proposition 3.6]{AngellaPediconi_ChernScal} to the variation given by $\left(-(\dd\dd^c\varphi)^{1,1}\right)^{\sharp_{\omega}}$, where $\sharp_{\omega}$ denotes the operation of raising an index using $\omega$.}
\[
\begin{split}
DG_x \vert_{(0,0)} (\varphi) = &-\Delta_{g_x}^2 (\varphi) - \langle \mrm{Ric}_x, (\dd\dd^c\varphi)^{1,1} \rangle_{g_x} \\
&+ \Delta_{g_x} \left(-\langle \dd\varphi, \vartheta_x \rangle_{g_x} + \mrm{Tr}_{g_x}\left(\frac{1}{2}\dd\varphi \circ N_x\right)\right) + \langle \mrm{Ric}_x,\frac{1}{2}\dd\varphi\circ N_x\rangle_{g_x} \\
&+ \langle \dd (-\Delta_g\varphi +  \langle\dd\varphi, \vartheta_x \rangle_{g_x}) + \dd \mrm{Tr}_{g_x} \left(\frac{1}{2}\dd\varphi \circ N_x\right), \vartheta_x \rangle_{g_x},
\end{split}
\]
%\[
%\begin{aligned}
%DG_x \vert_{(0,0)} (\varphi) = -\m{D}^*_x\m{D}_x (\varphi) &+ \langle \nabla_{g_x}\mrm{Scal}(\omega, \Phi(x)), \nabla_{g_x}\varphi \rangle_{g_x} \\
%&- \Delta_{g_x} \langle \dd\varphi, \vartheta_x \rangle_{g_x} - \langle \dd (\Delta_g\varphi +  \langle\dd\varphi, \vartheta_x \rangle_{g_x}), \vartheta_x \rangle_{g_x},
%\end{aligned}
%\]
where $\vartheta_x$ is the torsion 1-form of the Chern connection of the metric $g_x$ and $\mrm{Ric}_x$ is the Chern-Ricci form of $g_x$.
Moreover,% when $\Phi(x)$ is integrable,
\[
-\Delta_{g_x}^2 (\varphi) - \langle \mrm{Ric}_x, (\dd\dd^c\varphi)^{1,1} \rangle_{g_x} = -\m{D}^*_x\m{D}_x (\varphi) + \langle \nabla_{g_x}\mrm{Scal}(\omega, \Phi(x)), \nabla_{g_x}\varphi \rangle_{g_x},
\]
which is the linearisation of the scalar curvature when $(\omega, \Phi(x))$ is K\"ahler. Consider the Lichnerowicz operator
\[
L_x (\varphi) = -\m{D}^*_x\m{D}_x (\varphi) .
\]
The kernel of $L_x$ is contained in $\ker \m{D}^*_0\m{D}_0 = \mf{k}$, hence $L_x$ is surjective. 
%The linearisation of $G_x$ at $(\xi, 0)$ is the operator
%\[
%DG_x \vert_{(\xi,0)} (\varphi) = -\m{D}^*_x\m{D}_x (\varphi) + \langle \nabla_{g_x}\mrm{Scal}(\omega, \Phi(x)), \nabla_{g_x}\varphi \rangle_{g_x} - h_\xi- \langle \nabla_{g_x} h_\xi, \nabla_{g_x} \varphi \rangle_{g_x}
%\]
%Consider the operator
%\[
%L_x (\xi, \varphi) = -\m{D}^*_x\m{D}_x (\varphi) - h_\xi- \langle \nabla_{g_x} h_\xi, \nabla_{g_x} \varphi \rangle_{g_x}.
%\]
%The kernel of $L_x$ is contained in $\ker \m{D}^*_x\m{D}_x$, which is contained in $\ker \m{D}^*_0\m{D}_0 = \mf{k}$, hence $L_x$ is surjective. 

Next, we show that its right inverse $Q_x$ satisfies
\begin{equation}\label{eq:rinverse_lich}
\norm{Q_x}< C
\end{equation}
for some constant $C$.
Let $\lambda_1$ be the first eigenvalue of the Laplacian $\Delta_0$ with respect to the Riemannian metric $g_0=g(\omega, \Phi(0))$; it satisfies
\[
\lambda_1 = \inf_{\varphi \in \mf{k}^\perp} \frac{\langle \Delta_0(\varphi), \varphi\rangle_{g_0} }{\norm{\varphi}^2_{g_0}},
\]
which is equivalent to the inequality
\[
\norm{\delbar_0\varphi}^2_{L^2(g_0)} \ge \lambda_1 \norm{\varphi}^2_{L^2(g_0)}, \quad \varphi \in \mf{k}^\perp.
\]
Then we can write
\[
\langle \Delta_x^{g_0}(\varphi), \varphi\rangle_{g_0} = \norm{\delbar_x (\varphi)}^2_{g_0} + \norm{\delbar^*_x (\varphi)}^2_{g_0},
\]
where the $\delbar$-operator is with respect to the complex structure $\Phi(x)$ but the adjoint operator is with respect to the metric $g_0$. Since $\delbar_x$ is continuous in $x$ it follows that
\[
\inf_{\varphi \in \mf{k}^\perp} \frac{\langle \Delta_x^{g_0}(\varphi), \varphi\rangle_{g_0} }{\norm{\varphi}^2_{g_0}} \ge \frac{\lambda_1}{2}.
\]
Since the metrics $g_x$ and $g_0$ are uniformly equivalent when $x$ is in a sufficiently small neighbourhood of the origin, we obtain the inequality
\[
\norm{\delbar_x\varphi}^2_{L^2(g_x)} \ge C \norm{\varphi}^2_{L^2(g_x)},
\]
with constant independent on $x$. This in turn gives a Poincar\'e type inequality for $\m{D}_x$ with constant independent on $x$:
\[
\norm{\m{D}_x \varphi}^2_{L^2(g_x)} \ge C \norm{\nabla_{g_x} \varphi}^2_{L^2(g_x)} = C \norm{\delbar_x \varphi}^2_{L^2(g_x)} \ge C \norm{\varphi}^2_{L^2(g_x)}.
\]
Let now $\varphi + \xi= Q_x(\psi)$, where $\xi$ is the $L^2(g_x)$-orthogonal projection of $\psi$ on $\mf{k}$. 
It follows that
\begin{align*}
\norm{Q_x\psi}_{L^{2}(g_x)} \le \norm{\varphi}_{L^{2}(g_x)}+ \norm{\xi}_{L^{2}(g_x)} \le C \norm{\psi}_{L^{2}(g_x)} + \norm{\psi}_{L^{2}(g_x)} \le (C+1)\norm{\psi}_{L^{2}(g_x)}.
\end{align*}
Combining this with the Schauder estimate
\[
\norm{\varphi}_{W^{2, p+4}(g_x)} \le C \left( \norm{\psi}_{L^2(g_x)} + \norm{\psi}_{W^{2,p}(g_x)} \right),
\]
we obtain
\[
\norm{Q_x\psi}_{W^{2,p}(g_x)}\le C\norm{\psi}_{W^{2,p}(g_x)},
\]
where again the constants can be made independent of $x$.
This proves the inequality \eqref{eq:rinverse_lich}.

Since
%\[
%\langle \nabla_{g_x}\mrm{Scal}(\omega, \Phi(x)), \nabla_{g_x}\varphi \rangle_{g_x} - \Delta_{g_x} \langle \dd\varphi, \vartheta_x \rangle_{g_x} - \langle \dd (\Delta_g\varphi +  \langle\dd\varphi, \vartheta_x \rangle_{g_x}), \vartheta_x \rangle_{g_x},
%\]
%and
\[
\begin{split}
\Scal (\omega, \Phi(x)) = \widehat S + O(\card{x}),\quad
\vartheta_x = O(\card{x}), \quad
N_x = O(\card{x}^2),
\end{split}
\]
then $L_x(\varphi)$ differs from the acual linearisation $DG_x \vert_{(\xi,0)}(\varphi)$ by a term which is $O(\card{x})$, so
the same estimates \eqref{eq:rinverse_lich} holds for the right inverse to $DG_x \vert_{(\xi,0)}$, up to shrinking $V$.
Therefore by the implicit function theorem for each $x$ we can find $(\xi, \varphi_x)$, varying smoothly on $x$, such that $G_x(\xi, \varphi_x) = 0$.
\end{proof}

We will refer to $V$ as the \emph{Kuranishi space} and to $\Phi$ as the \emph{Kuranishi map}.
Since we also allow non-integrable almost complex structures, the slice $V$ is an actual ball. Instead, in the original work by Kuranishi, the set he constructs parametrises only \emph{integrable} complex structures, hence it is a complex analytic subspace $V^{\mrm{int}}$ of our $V$.

%When $x \in V^{\mrm{int}}$, the second item of Theorem \ref{Thm:Kuranishi} says that the scalar curvature of $(\omega_{\varphi_x}, \Phi(x))$ is a holomorphy potential for $\Phi(x)$.

\begin{remark}
The above proof of Theorem \ref{Thm:Kuranishi} gives a different perturbation of the K\"ahler structure than the one given by Sz{\'e}kelyhidi \cite{Szekelyhidi_deformations} and Br\"onnle \cite{Bronnle_PhDthesis}, to ensure that the scalar curvature belongs to the Lie algebra of $K$.
In particular, they fix the K\"ahler form $\omega$ and perturb the Kuranishi map, which leads them to differentiate the function $\Scal(\omega, \Phi(x))$ in the $x$-variable. We instead perturb the K\"ahler form, thus the Kuranishi map remains holomorphic, and we differentiate the scalar curvature in the direction of the K\"ahler potential.
\end{remark}

Until this point, we have considered deformations of the complex structure of a fixed symplectic manifold as $(0,1)$-forms with values in the $(1,0)$-tangent bundle.
The point of view which we now adopt following Dervan--Hallam \cite{DervanHallam}, consists of considering deformations of the manifold $M$ as a smooth proper morphism $\m{U} \to V$, called the \emph{universal family}, such that $\m{U}$ is diffeomorphic to $M\times V$ and the morphism is equivariant with respect to the action of $K$.
The fibre over $x \in V$ is given by the almost complex manifold $\m{U}_x= (M, \omega, \Phi(x))$.

Let $\omega_{\m{U}}$ be the pull-back to $\m{U}$ of the 2-form $\omega$ of $M$; $\omega_{\m{U}}$ is symplectic when restricted to the fibres and K\"ahler when restricted to the integrable fibres. The universal family $\m{U}$ admits an almost complex structure $\bb{J}$ such that $\bb{J}\vert_x = \Phi(x)$. The two-form $\omega_{\m{U}}$ is closed and relatively symplectic, and there exists a moment map $\tau: \m{U} \to \mf{k}$ with respect to $\omega_{\m{U}}$: even if $\omega_{\m{U}}$ is only relatively symplectic, one can still define the moment map by requiring that $\tau$ is equivariant and that for each $\xi \in \mf{k}$
\[
\dd\langle \tau, \xi \rangle = \omega_{\m{U}}(\cdot, \sigma_\xi),
\]
where $\sigma_\xi$ is the infinitesimal vector field associated to $\xi$ \cite[Definition 3.3]{DervanHallam}.

The relatively symplectic form $\omega_{\m{U}}$ induces the two-form on $V$
\begin{equation}\label{Eq:WPmetric}
\Omega_{WP} =  \frac{\widehat{S}}{n+1}\int_{\m{U}/V}\omega_{\m{U}}^{n+1} - \int_{\m{U}/V} \rho_{\m{U}} \wedge \omega_{\m{U}}^n,
\end{equation}
where $\rho_{\m{U}}$ is the curvature  of the Hermitian connection induced by $\omega_{\m{U}}$ on the top wedge power of the vertical tangent bundle.
It follows form Fujiki--Schumacher \cite[\S 7]{FujikiSchumacher_moduli_cscK} that $\Omega_{WP}$ is a K\"ahler metric on $V$, called the \emph{Weil--Petersson metric}. In fact, the Weil-Petersson metric is the pull-back via $\Phi$ of the K\"ahler metric on $\scr{J}$ induced by the Hermitian inner product
\begin{align*}
\langle \alpha, \beta \rangle_J := \int_M \langle \alpha, \beta \rangle_{g_J} \frac{\omega^n}{n!}, \quad \alpha, \beta \in T_{J}\scr{J},
\end{align*}
which is the Donaldson-Fujiki K\"ahler metric on $\scr{J}$ \cite{Donaldson_momentmap,Fujiki_momentmap} (see also \cite[\S6.1]{Szekelyhidi_book}).
The following interpretation of the scalar curvature as a moment map is due to Dervan--Hallam {\cite[Theorem 4.6]{DervanHallam}}.

\begin{theorem}\label{Thm:DervanHallam}
There exists a moment map $\mu$ for the $K$-action on $V$ with respect to $\Omega_{WP}$, which, on the smooth locus of $V^{\mrm{int}}$, is given as the projection
\begin{equation}\label{Eq:Moment_map_DH}
\begin{aligned}
\langle \mu(x), \xi \rangle = \int_{\m{U}_x} \langle\tau, \xi \rangle\vert_x \left(\Scal(\omega, \Phi(x))-\widehat{S}\right) \omega^n,
\end{aligned}
\end{equation}
for $\xi \in \mf{k}$.
\end{theorem}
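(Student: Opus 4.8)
The plan is to reduce the statement to the Donaldson--Fujiki description of the scalar curvature as a moment map on the space $\scr{J}$, transported to the finite-dimensional slice $V$ via the Kuranishi map $\Phi$. Write $\mf{g} := \Lie(\scr{G}) \cong C_0^\infty(M, \bb{R})$. For the action of $\scr{G} = \mrm{Ham}(M, \omega)$ on $(\scr{J}, \Omega_{\scr{J}})$, where $\Omega_{\scr{J}}$ is the K\"ahler form of the $L^2$ metric $\langle \cdot, \cdot \rangle_J$, the Hermitian scalar curvature gives a moment map
\[
\mu_{\scr{J}} : \scr{J} \to \mf{g}^*, \qquad \langle \mu_{\scr{J}}(J'), h \rangle = \int_M \left( \Scal(\omega, J') - \widehat{S} \right) h \, \omega^n,
\]
valid for all $\omega$-compatible (not necessarily integrable) almost complex structures. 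Since $K \subset \scr{G}$, restricting functionals to $\mf{k}$ produces a moment map $\mrm{res}_{\mf{k}} \circ \mu_{\scr{J}}$ for the $K$-action on $(\scr{J}, \Omega_{\scr{J}})$, and the claim becomes that \eqref{Eq:Moment_map_DH} is its pull-back along $\Phi$.

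The first step is to establish, for $\xi \in \mf{k}$, the identity $\langle \mu(x), \xi \rangle = \langle \mu_{\scr{J}}(\Phi(x)), \xi \rangle$. Restricting the defining relation $\dd \langle \tau, \xi \rangle = \omega_{\m{U}}(\cdot, \sigma_\xi)$ of the relative moment map to a fibre $\m{U}_x$, and using that $\omega_{\m{U}}$ is the pull-back of $\omega$ and that the $K$-action on $\m{U}$ restricts, on each fibre, to the given action of $K$ on $M$ (the diffeomorphism $\m{U} \cong M \times V$ being $K$-equivariant, and $\Phi$ being $K$-equivariant for the push-forward action on $\scr{J}$), one sees that $\langle \tau, \xi \rangle\vert_x$ is a Hamiltonian function on $(\m{U}_x, \omega) \cong (M, \omega)$ for the vector field generated by $\xi$. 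Hence it differs from the canonical mean-value-zero potential $h_\xi$ --- the element of $\mf{g}$ representing $\xi$ --- by a constant depending only on $x$, and this constant drops out of the integral in \eqref{Eq:Moment_map_DH}, since $\int_M \left( \Scal(\omega, \Phi(x)) - \widehat{S} \right) \omega^n = 0$; indeed $\int_M \Scal(\omega, J') \, \omega^n$ is a topological quantity independent of the compatible almost complex structure $J'$, which is the very normalisation of $\widehat{S}$. Therefore $\langle \mu(x), \xi \rangle = \int_M h_\xi \left( \Scal(\omega, \Phi(x)) - \widehat{S} \right) \omega^n = \langle \mu_{\scr{J}}(\Phi(x)), \xi \rangle$, that is, $\langle \mu, \xi \rangle = \Phi^* \langle \mu_{\scr{J}}, \xi \rangle$.

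Then one concludes by differentiating and invoking the Donaldson--Fujiki moment map identity: for $v \in T_x V$,
\[
\dd_x \langle \mu, \xi \rangle(v) = \left( \Phi^* \dd \langle \mu_{\scr{J}}, \xi \rangle \right)(v) = \Omega_{\scr{J}}\!\left( \dd\Phi(v), \, \sigma_\xi^{\scr{J}}(\Phi(x)) \right),
\]
where $\sigma_\xi^{\scr{J}}$ is the infinitesimal $\xi$-action on $\scr{J}$. Since $\Phi$ is $K$-equivariant, $\dd\Phi \circ \sigma_\xi^V = \sigma_\xi^{\scr{J}} \circ \Phi$ (with $\sigma_\xi^V$ the infinitesimal action on $V$), so the right-hand side equals $\Omega_{\scr{J}}\!\left( \dd\Phi(v), \dd\Phi(\sigma_\xi^V(x)) \right) = (\Phi^* \Omega_{\scr{J}})(v, \sigma_\xi^V(x)) = \Omega_{WP}(v, \sigma_\xi^V(x))$, using that $\Phi^* \Omega_{\scr{J}} = \Omega_{WP}$, recalled above. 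Thus $\dd \langle \mu, \xi \rangle = \Omega_{WP}(\cdot, \sigma_\xi^V)$; and $\mu$ is $K$-equivariant, being the composite of the $K$-equivariant $\Phi$ and $\mu_{\scr{J}}$ with the co-adjoint-equivariant restriction $\mf{g}^* \to \mf{k}^*$.

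The main obstacle is the translation between the universal-family language and the infinite-dimensional picture: one must pin down that the fibrewise normalisation of $\tau$ in \eqref{Eq:Moment_map_DH} agrees, up to a fibre-constant, with the canonical potential $h_\xi$ (which is where the $K$-equivariance of the family $\m{U} \to V$ enters), and that the $L^2$ metric on $\scr{J}$ pulls back under the holomorphic map $\Phi$ to precisely $\Omega_{WP}$ --- the latter being the Fujiki--Schumacher computation, which we take as input. A more self-contained route would avoid $\scr{J}$ altogether: differentiate the fibre integral defining $\Omega_{WP}$ along $\sigma_\xi^V$ directly, via Cartan's magic formula, the relation $\dd \langle \tau, \xi \rangle = \omega_{\m{U}}(\cdot, \sigma_\xi)$, and the first-variation formula for the Hermitian scalar curvature under a variation of complex structure; there the bookkeeping --- in particular the contribution of the curvature term $\rho_{\m{U}}$ --- would be the principal technical burden.
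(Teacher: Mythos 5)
The paper does not actually prove this statement: it is imported verbatim from Dervan--Hallam, so there is no internal proof to match yours against. Your argument is correct, and it is a genuinely different route from the cited one. You reduce the theorem to two inputs that the surrounding text of the paper already asserts: the Donaldson--Fujiki description of the Hermitian scalar curvature as a moment map for $\mrm{Ham}(M,\omega)$ on $(\scr{J},\Omega_{\scr{J}})$, and the Fujiki--Schumacher identification $\Phi^*\Omega_{\scr{J}}=\Omega_{WP}$; given those, the statement is the formal fact that a $K$-equivariant map intertwining the symplectic forms pulls back a moment map to a moment map, after restricting $\mf{g}^*\to\mf{k}^*$. Your handling of the normalisation of $\tau$ is the one point needing care and you get it right: restricting $\dd\langle\tau,\xi\rangle=\omega_{\m{U}}(\cdot,\sigma_\xi)$ to a fibre only sees the vertical component of $\sigma_\xi$, so $\langle\tau,\xi\rangle\vert_x=h_\xi+c(x)$, and the constant is killed because $\int_M\Scal(\omega,J')\,\omega^n$ is topological and this is exactly the normalisation of $\widehat S$. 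By contrast, Dervan--Hallam prove the theorem by the ``self-contained route'' you sketch at the end: they differentiate the fibre-integral form directly and obtain the moment map property for an \emph{arbitrary} relatively K\"ahler metric $\omega_{\m{U}}$, without passing through $\scr{J}$. That direction of the logic matters: in their treatment the moment map identity is established first and does not presuppose that $\Omega_{WP}$ coincides with the pulled-back $L^2$ metric away from the cscK locus, whereas your proof leans entirely on that identity, which is where all the computational content is hidden. Since the paper asserts the identity on all of $V$ you are entitled to use it here, but you should be aware that your argument is only as strong as that assertion, and that the direct fibre-integral computation is what makes the theorem robust enough to apply (as the paper later does) to the perturbed metrics $\omega_{\m{U}}+i\del\delbar\varphi$ and to the adiabatic setting of Theorem \ref{Thm:osc_momentmap}.
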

%Note that Theorem \ref{Thm:DervanHallam} applies to any relatively K\"ahler metric on $\m{U}\to V$.
There is a difference in sign with \cite{DervanHallam}: this is due to the fact that we take the opposite sign for the infinitesimal action \eqref{Eq:map_P}.
Moreover, since the smooth locus of $V^{\mrm{int}}$ is dense in $V^{\mrm{int}}$, and the (Chern) scalar curvature is defined for all $x \in V$, the expression \eqref{Eq:Moment_map_DH} actually holds on all $V^{\mrm{int}}$.

We now change $(\omega, J_x)$ to $(\omega_{\varphi_x}, J_x)$ using Theorem \ref{Thm:Kuranishi} and, since $\varphi_x$ vary smoothly with $x$, we also change $\omega_{\m{U}}$ to $\omega_{\m{U}, \varphi}$ globally on $\m{U}$.
Consider the function $\m{M}_{\omega_{\m{U}}, \varphi} : V \to \bb{R}$ defined as
\[
\m{M}_{\omega_{\m{U}}, \varphi}(x) = \int_{0}^{1}\int_{\m{U}_x} \varphi_x \left(\Scal(\omega_{t\varphi_x}, \Phi(x))-\widehat{S}\right)\omega_{t\varphi_x}^n \dd t,
\]
where $(\omega_{t\varphi_x}, J_x) =( \omega + \frac{1}{2}(\dd\dd^c(t\varphi_x))^{1,1}, J_x)$.
This is a smooth function on $V$, and, when $x \in V^{\mrm{int}}$, $\m{M}_{\omega_{\m{U}}, \varphi}(x)$ coincides with (minus) the Mabuchi functional of $\varphi_x$ on the corresponding fibre $\m{U}_x$.
On the integrable locus, where both $\omega_{\m{U}}$ and $\omega_{\m{U}, \varphi}$ are relatively K\"ahler, the expression \eqref{Eq:WPmetric} defines the Weil-Petersson metrics $\Omega_{WP}$ and $\Omega_{WP, \varphi}$.
Then we have the following relation \cite[\S 7]{PhongRossSturm}, \cite[Proposition 4.2]{DervanNaumann_ModuliCscK}.

\begin{proposition}\label{Prop:WP_Mabuchi}
On the smooth locus of $V^{\mrm{int}}$,
\[
\Omega_{WP}-\Omega_{WP, \varphi} = -i\del\delbar\m{M}_{\omega_{\m{U}}, \varphi}.
\]
%where $\m{M}_{\omega_{\m{U}}, \varphi}$ is the vertical Mabuchi functional.
\end{proposition}

Since the function $\m{M}_{\omega_{\m{U}}, \varphi}$ is a smooth function defined on all of $V$, we can use Proposition \ref{Prop:WP_Mabuchi} to extend the Weil-Petersson metric obtained using $\omega_{\m{U},\varphi}$. In fact, although the expression \eqref{Eq:WPmetric} is only valid on the integrable locus, we define
\[
\Omega_{WP, \varphi} = \Omega_{WP} +i\del\delbar\m{M}_{\omega_{\m{U}}, \varphi},
\]
which is a K\"ahler metric defined on all of $V$ (possibly after shrinking $V$).
%Although the expression \eqref{Eq:WPmetric} is only valid on the integrable locus, we still denote this K\"ahler metric by $\Omega_{WP, \varphi}$.
Moreover, in the proof of Theorem \ref{Thm:Kuranishi}, we can restrict ourselves to work only with $K$-invariant potentials, so that the $K$-equivariant implicit function theorem guarantees that $\varphi$ is $K$-invariant; hence $\m{M}_{\omega_{\m{U},\varphi}}$ is $K$-invariant. Thus the moment map defined by Theorem \ref{Thm:DervanHallam} with respect to the Weil-Petersson metric $\Omega_{WP, \varphi}$, which has the expression \eqref{Eq:Moment_map_DH} on $V^{\mrm{int}}$, extends continuously to a moment map $\mu_{\varphi}$ on all $V$ as
\[
\langle\mu_{\varphi}, \xi \rangle = \langle\mu, \xi \rangle + \dd^c\m{M}_{\omega_{\m{U}, \varphi}}(\sigma_{\xi}).
\]
The extension is unique up to elements in the centre of $\mf{k}$.
This is the moment map we will work with for the rest of this section and in Appendix \ref{appendix}; in the following, we denote the moment map $\mu_{\varphi}$ again by $\mu$, making the change in the potential $\varphi$ implicit.

Let us denote by
\begin{enumerate}
%Moreover, we denote by
%\begin{enumerate}
\item $\mf{k}^M$ the realisation of $\mf{k}$ as the space of holomorphy potentials with respect to $(\omega,J)$;
\item $\mf{k}_\varphi^M$ the realisation of $\mf{k}$ as the space of holomorphy potentials with respect to $(\omega+ i\del\delbar\varphi, J)$.
%\end{enumerate}
%\item $\mf{k}^{\m{U}}$ the Lie algebra of $K$ realised as a space of fibrewise holomorphy potentials on $\m{U}$ with respect to $\omega_{\m{U}}$;
%\item $\mf{k}^{\m{U}}_\varphi$ the Lie algebra of $K$ realised as a space of fibrewise holomorphy potentials on $\m{U}$ with respect to $\omega_{\m{U}}+i\del\delbar\varphi$, for a K\"ahler potential $\varphi$,
\end{enumerate}
%and analogously for $\mf{k}^V$.
Moreover, for each $\xi \in \mf{k}$, we denote the induced vector fields $\sigma_\xi^{\m{U}}$ on $\m{U}$, $\sigma_\xi^{M}$ on $M$ and $\sigma_\xi^{V}$ on $V$.
Combining Theorem \ref{Thm:Kuranishi} with Lemma \ref{Thm:DervanHallam}, we obtain the following result, whose proof is identical to the one of \cite[Lemma 4.15]{OrtuSektnan_ssvb}, but we explicitly adapt it here to the context of cscK metrics for sake of completeness.

\begin{proposition}\label{prop:projection_moment_map_cscK}
A point $x \in V^{\mrm{int}}$ is a zero of the moment map $\mu$ if and only if the scalar curvature $\Scal(\omega+i\del\delbar\varphi_x, \Phi(x))$ is constant.
\end{proposition}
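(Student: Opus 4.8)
The plan is to unwind the definition of the moment map $\mu$ from Theorem \ref{Thm:DervanHallam} and combine it with the normalization from Theorem \ref{Thm:Kuranishi}(2). First I would recall that, by Theorem \ref{Thm:Kuranishi}(2), for each $x \in V$ there is a K\"ahler potential $\varphi_x$, varying smoothly with $x$, so that $\Scal(\omega + i\del\delbar\varphi_x, \Phi(x)) - \widehat{S} \in \mf{k}^M_{\varphi_x}$, i.e.\ the scalar curvature deficit is the holomorphy potential $h_{\xi(x)}$ of some $\xi(x) \in \mf{k}$ with respect to the perturbed K\"ahler form. The key point is then to show that $\mu(x) = 0$ if and only if $\xi(x) = 0$, since $\xi(x)=0$ is exactly the statement that the scalar curvature of $(\omega + i\del\delbar\varphi_x, \Phi(x))$ is the constant $\widehat{S}$.

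The main technical step is to rewrite the pairing $\langle \mu(x), \cdot\rangle$ using the perturbed metric rather than $\omega$. Since Theorem \ref{Thm:DervanHallam} applies to \emph{any} relatively K\"ahler metric on $\m{U} \to V$, I would apply it with the relatively K\"ahler form $\omega_{\m{U}} + i\del\delbar\varphi_{\m{U}}$, where $\varphi_{\m{U}}$ restricts to $\varphi_x$ on each fibre $\m{U}_x$; this produces the \emph{same} moment map $\mu$ (moment maps for a fixed $K$-action on $V$ with respect to a fixed K\"ahler form $\Omega_{WP}$ agree up to the additive constant fixed by equivariance, and here the equivariant normalization is canonical), but now expressed as
\[
\langle \mu(x), \xi \rangle = \int_{\m{U}_x} \langle \tau_{\varphi_x}, \xi\rangle\big\vert_x \left(\Scal(\omega + i\del\delbar\varphi_x, \Phi(x)) - \widehat{S}\right)(\omega + i\del\delbar\varphi_x)^n,
\]
where $\tau_{\varphi_x}$ is the moment map for the perturbed form. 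Substituting the identity from Theorem \ref{Thm:Kuranishi}(2), the integrand becomes $\langle \tau_{\varphi_x}, \xi\rangle\vert_x \, h_{\xi(x)} \, (\omega+i\del\delbar\varphi_x)^n$, and $\langle \tau_{\varphi_x}, \xi\rangle\vert_x$ is precisely the holomorphy potential $h_\xi$ of $\sigma_\xi$ on the fibre with respect to the perturbed metric. Hence $\langle \mu(x), \xi\rangle = \langle h_\xi, h_{\xi(x)}\rangle_{L^2(\omega+i\del\delbar\varphi_x)}$, the $L^2$ inner product of two fibrewise holomorphy potentials.

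From here the conclusion is formal. If the scalar curvature of $(\omega + i\del\delbar\varphi_x, \Phi(x))$ is constant then $\xi(x) = 0$ so $h_{\xi(x)} = 0$ and $\mu(x) = 0$. Conversely, if $\mu(x) = 0$, then $\langle h_\xi, h_{\xi(x)}\rangle_{L^2} = 0$ for all $\xi \in \mf{k}$; taking $\xi = \xi(x)$ gives $\|h_{\xi(x)}\|^2_{L^2} = 0$, hence $h_{\xi(x)} = 0$ and thus the scalar curvature is the constant $\widehat{S}$. I expect the main obstacle to be the bookkeeping in the second step: one must check carefully that replacing $\omega_{\m{U}}$ by its $\del\delbar$-perturbation genuinely yields the same moment map $\mu$ on $V$ (this uses that $\Omega_{WP}$ is unchanged up to the relevant cohomological normalization, or alternatively that the two candidate moment maps differ by an element of $\mf{k}^*$ which must vanish by the equivariance/integral normalization of \eqref{Eq:Moment_map_DH}), and that the fibrewise moment map $\tau_{\varphi_x}$ restricts on each fibre to the holomorphy potential with respect to the perturbed K\"ahler form — i.e.\ the compatibility of the change-of-potential formula $h \mapsto h + \langle\nabla h, \nabla\varphi\rangle$ with the definition of $\tau$. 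Since the corresponding statement for semistable vector bundles is \cite[Lemma 4.15]{OrtuSektnan_ssvb}, I would follow that argument line by line.
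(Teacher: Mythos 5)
Your overall strategy matches the paper's: invoke Theorem \ref{Thm:Kuranishi}(2) to write $\Scal(\omega+i\del\delbar\varphi_x,\Phi(x))-\widehat S$ as a holomorphy potential $h_{\xi(x)}$ for the perturbed fibre metric, apply Theorem \ref{Thm:DervanHallam} to the relatively K\"ahler form $\omega_{\m{U}}+i\del\delbar\varphi$, and reduce to linear algebra on $\mf{k}$. But there is a genuine gap at the decisive step. You assert that $\langle\tau_{\varphi_x},\xi\rangle\vert_{\m{U}_x}$ is \emph{precisely} the holomorphy potential of $\sigma_\xi$ on the fibre with respect to $\omega+i\del\delbar\varphi_x$, so that $\langle\mu(x),\xi\rangle$ becomes an honest $L^2$ inner product on a single copy of $\mf{k}$ and the conclusion follows from positive-definiteness by taking $\xi=\xi(x)$. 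This identification fails for $x\neq 0$. The vector field $\sigma_\xi^{\m{U}}$ on the universal family splits as $\sigma_\xi^{M}+\sigma_\xi^{V}$, and since $\varphi$ is a function on all of $\m{U}$ rather than on a single fibre, the change-of-potential formula on the total space gives
\[
\langle\tau_\varphi,\xi\rangle\vert_{\m{U}_x}=h_\xi+\sigma_\xi^{M}(\varphi_x)+\sigma_\xi^{V}(\varphi)\vert_{\m{U}_x},
\]
whereas the intrinsic fibrewise potential is $h_\xi+\sigma_\xi^{M}(\varphi_x)$. The extra term $\sigma_\xi^{V}(\varphi)\vert_{\m{U}_x}$ vanishes only at $x=0$ and is of order $\card{x}$ in general.

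Consequently $\mu(x)=0$ only tells you that $h_{\xi(x)}$, which lies in the image of one embedding $\iota_{2,x}$ of $\mf{k}$ into $C^\infty(M)$, is $L^2$-orthogonal to the image of a \emph{different} embedding $\iota_{1,x}$; you cannot take $\xi=\xi(x)$ and conclude $\norm{h_{\xi(x)}}^2=0$. To close the argument you need the additional fact that the $L^2$-orthogonal projection onto $\iota_{1,x}(\mf{k})$ restricts to an isomorphism on $\iota_{2,x}(\mf{k})$ for $x$ small; the paper proves this by expressing the projection in an orthonormal basis of holomorphy potentials as the identity matrix plus an $O(\card{x})$ correction. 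This is exactly the content of the argument of \cite[Lemma 4.15]{OrtuSektnan_ssvb}, which you cite at the very end and propose to follow ``line by line''; doing so would supply the missing step, but as written your proof asserts that the two potentials coincide and therefore skips the one part of the argument that is not formal.
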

\begin{proof}
Let $\mrm{pr}_M : \m{U} \to M$ be the projection. 
From the second item of Theorem \ref{Thm:Kuranishi}, for every $x \in V^{\mrm{int}}$ we can change $\omega$ by a K\"ahler potential $\varphi_x$ with respect to the complex structure $\Phi(x)$, in such a way that
\begin{equation}\label{eq:scal_deformation_ptwise}
\mrm{Scal}(\omega+i\del\delbar{\varphi_x}, \Phi(x)) - \widehat S \in \mf{k}^M_{\varphi_x}
\end{equation}
Since $\varphi_x$ varies smoothly with $x$, it defines a function $\varphi$ on $\m{U}$, so the condition \eqref{eq:scal_deformation_ptwise} can be rephrased as
\begin{equation*}%\label{eq:scal_deformation_pullback}
\mrm{Scal}((\omega_{\m{U},\varphi}, \bb{J})\vert_{\m{U}_x}) - \widehat S \in \mrm{pr}_M^*(\mf{k}^M_{\varphi_x}).
\end{equation*}
%Applying Theorem \ref{Thm:DervanHallam}, we obtain a moment map $\mu$ on $V$ induced by the relatively Hermitian metric $(\omega_{\m{U},\varphi}, \bb{J})$.

Next, we define two immersions of $\mf{k}$ in $C^\infty(M)$:
\begin{align*}
\iota_{1,x} (\xi) =& (h^{\m{U}}_\xi + \sigma_\xi^{\m{U}}(\varphi)) \vert_{U_x} \\
\iota_{2,x} (\xi) =& h_\xi + \sigma_\xi^{M}(\varphi_x) .
\end{align*}
%The two immersions are injective linear maps, hence their images are isomorphic to $\mf{k}$ and are given by
%$\iota_{1,x}(\mf{k}) = \mf{k}_\varphi^{\m{U}} \vert_{\m{U}_x}$, 
%$\iota_{2,x}(\mf{k}) = \mf{k}_{\varphi_x}^{M}$.
The two immersions are injective linear maps, hence their images are isomorphic to $\mf{k}$.
We can rephrase the condition \eqref{eq:scal_deformation_ptwise} as
\[
\Scal (\omega + i\del\delbar\varphi_x, \Phi(x))- \widehat S \in \iota_{2,x}(\mf{k}).
\]

%On the other hand, Theorem \ref{Thm:DervanHallam} implies that a point $x \in V$ is a zero of the moment map $\mu$ if and only if the projection of the scalar curvature of $(\omega+i\del\delbar\varphi_x, \Phi(x))$ is $L^2(g_x)$-orthogonal to $\mathfrak{k}_\varphi^{\m{U}}\vert_{\m{U}_x}$ in $C^{\infty}(M)$.
On the other hand, Theorem \ref{Thm:DervanHallam} implies that a point $x \in V^{\mrm{int}}$ is a zero of the moment map $\mu$ if and only if the projection of the scalar curvature of $(\omega+i\del\delbar\varphi_x, \Phi(x))$ is $L^2(g_x)$-orthogonal to $\iota_{1,x}(\mathfrak{k})$ in $C^{\infty}(M)$.
Denoting by $\Pi_{1,x}$ said orthogonal projection, Theorem \ref{Thm:DervanHallam} implies that $\mu(x)=0$ if and only if
\[
\Pi_{1,x} (\Scal (\omega + i\del\delbar\varphi_x, \Phi(x))- \widehat S) = 0.
\]

To conclude the proof, we must show that $\Pi_{1,x}$ restricted to $\iota_{2,x}(\mf{k})$ is an isomorphism, from which it follows that $\Pi_{1,x} (\Scal (\omega + i\del\delbar\varphi_x, \Phi(x))- \widehat S) = 0$ if and only if $\Scal (\omega + i\del\delbar\varphi_x, \Phi(x))- \widehat S=0$.
To see this, we write
\[
\iota_{1,x}(\xi) = \iota_{2,x}(\xi) + \sigma_\xi^V(\varphi) \vert_{\m{U}_x}.
\]
Let $\{ \xi_1, \dots, \xi_d\}$ be a basis of $\mf{k}$ such that the corresponding holomorphy potentials on $M$, denoted $\{ h_1, \dots, h_d \}$, are $\omega$-orthonormal.
By using this basis for $\mf{k}$, the restriction of $\Pi_{1,x}$ to $\iota_{2,x}(\mf{k})$ can be written as a linear map from $\bb{R}^d$ to itself as 
\[
(\lambda_1, \ldots, \lambda_d) \mapsto (c_1, \ldots, c_d),
\]
where
\begin{align*}
c_j =& \sum_{i=1}^d \lambda_i \left( \frac{\int_M \iota_{2,x} (\xi_i) \cdot \iota_{1,x} (\xi_j)  (\omega+i\del\delbar \varphi_{x})^{n}}{\|\iota_{1,x} (\xi_j)  \|_{L^2(\omega+i\del\delbar \varphi_{x})} } \right) .
\end{align*}
Since $\sigma^V_{\xi}$ vanishes at $x=0$ and $\varphi = O(|x|)$, we can write
\[
 \iota_{2,x} (\xi_i) \cdot \iota_{1,x} (\xi_j) =  \iota_{2,x} (\xi_i) \cdot \iota_{2,x} (\xi_j)  + O(|x|^2).
\]
Similarly, the contribution to $\iota_{2,x}(\xi)$ coming from $\varphi$ is of order $|x|$, so
\[
 \iota_{2,x} (\xi_i) \cdot \iota_{1,x} (\xi_j) =  h_i \cdot h_j + O(|x|).
\]
Therefore we have
\begin{align*}
\frac{\int_M \iota_{2,x} (\xi_i) \cdot \iota_{1,x} (\xi_j)  (\omega+i\del\delbar \varphi_{x})^{n}}{\|\iota_{1,x} (\xi_j)  \|_{L^2(\omega+i\del\delbar \varphi_{x})} } = \int_M h_i \cdot  h_j \omega^n  + O(|x|).
\end{align*}
Since the $ h_i$ form an orthonormal basis with respect to $\omega^n$, we obtain that
\begin{align*}
c_j =& \sum_{i=1}^d \delta_{j}^i \lambda_i + O(|x|) ,
\end{align*}
which is a perturbation of the identity map of $\mf{k}$. Thus for all sufficiently small $x$, $\Pi_{1,x}$ is also an isomorphism.
\end{proof}

Let $\omega_{\m{U}}$ be the relatively Hermitian metric on $\m{U}$ after the change given by the potential $\varphi$ coming from Theorem \ref{Thm:Kuranishi}.
We next compute an expansion of the induced moment map $\mu$ about the origin of $V$.
By definition,
\[
\dd_{x}\langle \mu, \xi \rangle (v) = \Omega_{x}(v, \m{L}_{\sigma_\xi}x),
\]
where we go back to intepreting $x \in V$ as $(0,1)$-form with values in the holomorphic tangent bundle to make sense of the Lie derivative.

The origin of $V$ is a fixed point of the action.
By identifying $T_0V$ with $\widetilde{H}^1$, we consider on $\widetilde{H}^1$ the linear symplectic form 
\begin{equation}\label{Eq:linear_metric_Htilde1}
\Omega_0 (\cdot, \cdot) = \bm{\Omega}_{J_0}(\dd_0\Phi \cdot, \dd_0 \Phi \cdot),
\end{equation}
and the linear action of $K$ induced by the one on $V$. For any $\xi \in \mf{k}$, consider the endomorphism of $\widetilde{H}^1$
\[
A_{\xi}(t) = \dd_0\left(y \mapsto \mrm{exp}(t\xi)\cdot y \right),
\]
where by $\mrm{exp}(t\xi)$ we denote the 1-parameter subgroup of $K$ defined by the element $\xi \in \mf{k}$. It corresponds via $\Phi$ to the flow of the Hamiltonian vector field $\sigma_\xi$ on $M$.
The operator $A_\xi(t)$ is a unitary operator, since it is linear and symplectic, because the group $K$ acts by symplectomorphisms on $V$. Let $A_\xi$ be the skew-hermitian endomorphism of $(\widetilde{H}^1, J_0)$
\begin{equation} \label{Eq:linearised_infinitesimal_action}
A_\xi := \left.\frac{\dd}{\dd t}\right\vert_{t=0} A_\xi(t).
\end{equation}
\begin{definition}\label{Def:map_nu}
We define a map $\nu : \widetilde{H}^1 \to \mf{k}$ by
\[
\langle \nu(v), \xi \rangle = \frac{1}{2}\Omega_0(A_\xi v, v).
\]
\end{definition}

The map $\nu$ can be characterised as a moment map and is related to the scalar curvature \eqref{Eq:Moment_map_DH} as follows \cite[\S 3]{Inoue_ModuliSpaceFano}:
\[
\begin{aligned}
\left.\frac{\dd^2}{\dd t^2}\right\vert_{t= 0} \langle \mu(tv), f \rangle = \langle \nu(v), f \rangle.
\end{aligned}
\]

\section{Optimal symplectic connections}\label{Sec:OSC}

Let $(Y, H_Y)\to (B,L)$ be a holomorphic submersion with a relatively ample line bundle $H_Y \to Y$.
We make the following assumptions to restrict the class of admissible fibrations to those whose fibres satisfy a stability property defined in terms of K-stability.
More precisely we assume that:
\begin{enumerate}
\item\label{Assumption1} the fibres $Y_b$ are analytically K-semistable, which means by definition that they each admit a degeneration to a cscK manifold $X_b$. We also assume that the degeneration is compatible with the fibration structure in the following sense: there exists a holomorphic map $\varpi:(\m{X}, \m{H}) \to (B, L) \times S$, parametrised by a disk $S$, such that for $s\ne 0$, the family $(\m{X}_s, \m{H}_s) \to B$ is isomorphic to the original fibration $\pi_Y:(Y, H_Y) \to B$ and the central fibration at $s=0$ is a family $\pi_X:(X, H_X) \to B$ whose fibres are cscK;
\item\label{Assumption2} the automorphism groups $\mrm{Aut}_0(X_b, H_b)$ of the fibres are all isomorphic.
\end{enumerate}
The first hypothesis is a stability assumption. We will refer to the submersion $X \to B$ as the \emph{relatively cscK degeneration} of $Y\to B$.
The second hypothesis holds if and only if the spaces $H^0(X_b, T^{1,0}X_b)$ are isomorphic as Lie algebras.

A relative version of Ehresmann's theorem \cite[Proposition 4.5]{Ortu_OSCdeformations} implies that $X$ and $Y$ are diffeomorphic. Let $M$ denote the underlying smooth manifold.
Since the Chern classes are integral classes we have that $c_1(H_X)$ coincides with $c_1(H_Y)$ as cohomology classes on $M$. Since $Y$ is a small deformation of $X$, the cohomology class $c_1(H_X)$ is of type $(1,1)$ on $Y$, so $\omega$ is a $(1,1)$-form with respect to the complex structure of $Y$ \cite[\S6.1]{Huybrechts_ComplexGeometry}.
By Moser's theorem \cite[Theorem 7.2]{DaSilva_Lectures_symplectic_geometry}, we can modify the complex structure of $Y$ by a small diffeomorphism so that $\omega$ restricted to the fibres of $\pi_Y$ is compatible with the restriction of the complex structure. Thus we can assume that $\omega$ is relatively K\"ahler on $Y$.

Therefore we can view $Y\to B$ and $X\to B$ as the same relatively symplectic fibration $(M, \omega) \to B$ with two different integrable almost complex structures $J$ and $I$
where $(\omega, I)$ is relatively cscK and $(\omega, J)$ is just relatively K\"ahler. The family $\m{X} \to B\times S$ corresponds to a family of complex structures $\{J_s\}$ on $(M, \omega) \to B$, such that for $s \ne 0$, $J_s$ is isomorphic to $J$ and $J_0$ is isomorphic to $I$.
In particular, for each $k \gg0$ we have a family of K\"ahler metrics
\[
(\omega_k:=\omega + k\omega_B, J_s),
\]
which are all isomorphic for $s \ne0$.

\subsection{Splitting of the function space}
The relatively K\"ahler form $\omega$ induces a splitting of the tangent space into a vertical and horizontal space
\[
TX = \m{V} \oplus \m{H}^\omega,
\]
where the vertical tangent space is given by the tangent space at every fibre and the horizontal tangent space is defined as the $\omega$-orthogonal space to $\m{V}$. In the language of symplectic geometry, $\omega$ is called a \emph{symplectic connection}.
This splitting extend to all tensor bundles of $TX$ and $T^*X$.

Under the relatively cscK assumption, we now explain how the function space $C^\infty(X, \bb{R})$ splits in a way that takes into account the fibration structure.
We begin by considering the vertical Lichnerowicz operator,
\begin{equation*}
\m{D}_{\m{V}}^*\m{D}_{\m{V}} : C^\infty(X, \bb{R}) \to C^\infty(X, \bb{R}),
\end{equation*}
defined fibrewise as $\left( \m{D}_{\m{V}}^*\m{D}_{\m{V}} \varphi \right)|_{X_b} = \m{D}_b^*\m{D}_b \left. \varphi\right|_{X_b}$. It is a real operator since the fibrewise metric is cscK. By integrating a function $\varphi \in C^\infty(X, \bb{R})$ over the fibres of $\pi$, we define a projection
\begin{equation*}
\begin{aligned}
C^\infty(X, \bb{R}) &\longrightarrow C^\infty(B, \bb{R}) \\
\varphi &\longmapsto \int_{X/B} \varphi \omega^m.
\end{aligned}
\end{equation*}
Its kernel is given by the space $C^\infty_0(X, \bb{R})$ of functions that have fibrewise mean value zero. A key step in the study of optimal symplectic connections is that we can further split this space as follows.

Consider the real vector bundle $E \to B$ \cite[\S3.1]{DervanSektnan_OSC1}, whose fibre over $b \in B$ is the real finite-dimensional vector space $\mrm{ker}_0(\m{D}_b^*\m{D}_b)$ of holomorphy potentials on the fibre $X_b$ with mean-value zero with respect to $\omega_b$. It is well defined as a vector bundle since we assume that the complex dimension of the space $H^0(X_b, T^{1,0}X_b)$ of holomorphic vector fields on $X_b$ is independent of $b$ \cite[\S2.3]{Hallam_geodesics}.
The space of smooth global sections of $E$, denoted by $C^\infty(E)$, is given by the kernel over the fibrewise mean-value-zero functions of the vertical Lichnerowicz operator $\m{D}_{\m{V}}^*\m{D}_{\m{V}}$. 
In \cite[Lemma 2.7]{Hallam_geodesics}, Hallam used the Cartan decomposition for the space $\mf{h}(X_b)$ of holomorphic vector fields of the fibre to show that $E_b$ can be also viewed as the vector space of all K\"ahler potentials $\varphi_b$ on $X_b$ of mean-value zero for which $\omega_b + i\del\delbar\varphi_b$ is still cscK.

We can split $C^\infty_0(X)$ as
\begin{equation*}
C^\infty_0(X, \bb{R}) = C^\infty(E) \oplus C^\infty(R),
\end{equation*}
where $C^\infty(R)$ is the fibrewise $L^2$-orthogonal complement with respect to the fibre metric $\omega_b$, i.e.\ for all $\varphi \in \mrm{ker}_0 \m{D}_b^*\m{D}_b$, $\psi \in C^\infty(R)$
\begin{equation*}
\langle \varphi , \psi \rangle_b := \int_{X_b} \varphi \psi \omega_b^m = 0.
\end{equation*}
So we obtain
\begin{equation}\label{Eq:splitting_function_space}
C^\infty(X, \bb{R}) = C^\infty(B) \oplus C^\infty(E)\oplus C^\infty(R).
\end{equation}
We denote by $p_E : C^\infty(X) \to C^\infty(E)$ the projection.
\begin{definition}
We denote by $\m{K}_E$ the space of functions $\varphi \in C^\infty(X)$ such that $\omega + i \del\delbar\varphi$ is still a \emph{fibrewise cscK metric}.
\end{definition}
In particular, if we change the relatively cscK metric $\omega$ to $\omega + i\del\delbar\varphi$ with $\varphi \in \m{K}_E$, the vector bundles $E(\omega)$ and $E(\omega+i\del\delbar\varphi)$ are isomorphic.

\subsection{Optimal symplectic connections}\label{Subsec:OSC}
The definition of optimal symplectic connections involves various curvature quantities and a description of the complex structure of $Y$ as a deformation of the one on $X$.
To describe the deformations of the complex structure $I$, we consider the space $\scr{J}_\pi$ of almost complex structures compatible with $\omega$ and such that $\dd\pi \circ J = J_B \circ \dd\pi$.
The tangent space at $I$ to $\scr{J}_\pi$ can be identified with
\[
T_{I}^{0,1}\scr{J}_\pi = \set*{\left.\alpha \in \Omega^{0,1}(\m{V}^{1,0}) \right| \omega_F (\alpha\cdot, \cdot) + \omega_F (\cdot, \alpha\cdot) = 0},
\]
where $\omega_F$ is the vertical part of $\omega$.
Consider the map
\begin{equation*}
\begin{aligned}
P_{\m{V}}:C^\infty_0(X, \bb{R}) &\longrightarrow T_{I}^{0,1}\scr{J}_\pi \\
\varphi &\longmapsto \delbar (\grad^{\omega_F}\varphi)^{1,0},
\end{aligned}
\end{equation*}
which is the relative version of the map \eqref{Eq:map_P}.
Let $\widetilde{H}^1_{\m{V}}$ be the kernel of the elliptic \cite[\S4.2]{Ortu_OSCdeformations} operator
\[
\square_{\m{V}} = P_{\m{V}}P_{\m{V}}^* + (\delbar^*\delbar)^2,
\]
where the adjoint is computed with respect to any K\"ahler metric on $X$ which restricts to $\omega_F$ vertically.
The space $\widetilde{H}^1_{\m{V}}$ is the space of integrable first-order deformations of $I$. Let $K_\pi$ be the group of biholomorphisms of $I$ which restrict to an isometry on each fibre, with respect to the fibrewise metric defined by $(\omega, I)$:
\[
K_\pi := \mrm{Isom}(\pi_x, \omega) = \set*{f \in \mrm{Diffeo}(X)\mid f^*\omega = \omega \ \text{and} \ \pi_X \circ f = \pi_X}.
\]
The group $K_\pi$ acts on $\widetilde{H}^1_{\m{V}}$ by pull-back.
The following theorem {\cite[\S4.2]{Ortu_OSCdeformations}} is a fibrewise version of Theorem \ref{Thm:Kuranishi}.

\begin{theorem}\label{Thm:relative_Kuranishi}
There exists a neighborhood of the origin $V_\pi \subset \widetilde{H}^1_{\m{V}}$ and a $K_\pi$-equivariant holomorphic map
\begin{equation}\label{Eq:rel_Kuranishi_map}
\Phi : V_\pi \rightarrow \scr{J}_\pi
\end{equation}
such that $\Phi(0) = I$ and
\begin{enumerate}
\item If $v_1, v_2 \in V_\pi$ and $v_1 \in K_b^{\bb{C}}\cdot  v_2$, and if $\Phi (v_1)$ is integrable, then $\Phi(v_1)$ and $\Phi(v_2)$ are isomorphic;
\item For any $J \in \scr{J}_\pi$ integrable close to $I$, there exists $J'$ in the image of $\Phi$ such that $J'$ is isomorphic to $J$;
\item For each $x \in V_\pi$ there is a relatively K\"ahler metric $\omega_x$ such that
\[
\Scal_{\m{V}}\left(\omega_x, \Phi(x)\right) \in C^\infty(E, I).
\]
\end{enumerate}
\end{theorem}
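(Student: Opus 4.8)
The plan is to read off the Kuranishi map $\Phi$ and properties (1)--(2) from the relative slice construction of \cite[\S4.2]{Ortu_OSCdeformations}, and to concentrate on (3), which is the fibrewise counterpart of the scalar-curvature perturbation in Theorem \ref{Thm:Kuranishi}(2). For the first part, I would recall that applying the Kuranishi construction fibrewise to the $K_\pi$-action on $\scr{J}_\pi$ — a relative analogue of \cite[\S6]{ChenSun_CalabiFlow} — produces a $K_\pi$-equivariant holomorphic map from a ball $V_\pi\subset\widetilde{H}^1_{\m{V}}$ into $\scr{J}_\pi$ with $\Phi(0)=I$ whose image is a slice for the $K_\pi$-action transverse to the orbit through $I$; completeness of the slice among integrable structures gives (2), and (1) follows just as for Theorem \ref{Thm:Kuranishi}(1). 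So the work is in (3), which I would prove by repeating the argument of Theorem \ref{Thm:Kuranishi}, now fibre by fibre and with all estimates made uniform over the base $B$.

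Concretely, write $\mf{k}_\pi = \Lie K_\pi$, identified with $C^\infty(E)$, and $h_\xi^{\m{V}}$ for the fibrewise holomorphy potential of $\xi\in\mf{k}_\pi$; let $\widehat{S}_{\m{V}}$ be the fixed topological constant (the fibrewise analogue of $\widehat{S}$), which is independent of $b$ since the fibres are deformation equivalent. It is worth noting that Assumption (2) — that the fibre automorphism algebras $H^0(X_b,T^{1,0}X_b)$ all agree — enters precisely here: it is what keeps the rank of $E$ constant under the deformation. Fix $x\in V_\pi$, let $g_x$ be the Riemannian metric induced by $\omega$ and $\Phi(x)$, and study the map
\[
\begin{aligned}
G_x : \mf{k}_\pi \times W^{2,p+4}(g_x) &\longrightarrow C^\infty_0(X,\bb{R}),\\
(\xi,\varphi) &\longmapsto \Scal_{\m{V}}(\omega+i\del\delbar\varphi,\Phi(x)) - \widehat{S}_{\m{V}} - h_\xi^{\m{V}} - \langle \nabla_{g_x} h_\xi^{\m{V}}, \nabla_{g_x}\varphi\rangle_{g_x},
\end{aligned}
\]
where $\Scal_{\m{V}}$ denotes the vertical Hermitian scalar curvature and all differential operators are fibrewise. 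Since $(\omega,I)$ is relatively cscK one has $G_0(0,0)=0$, and $G_x$ indeed takes values in the fibrewise-mean-zero functions because the fibrewise average of the vertical scalar curvature equals the topological constant $\widehat{S}_{\m{V}}$.

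As in the proof of Theorem \ref{Thm:Kuranishi}, the relevant linearisation is, up to a zeroth-order $O(|x|)$ term, the fibrewise Lichnerowicz operator $-\m{D}_{\m{V},x}^*\m{D}_{\m{V},x}$ together with the $\xi$-direction contributing $-h_\xi^{\m{V}}$: the cokernel of $\m{D}_{\m{V},x}^*\m{D}_{\m{V},x}$ is the space of fibrewise holomorphy potentials of $\Phi(x)$, of rank independent of $x$ by Assumption (2) and contained in $C^\infty(E)=\ker\m{D}_{\m{V},0}^*\m{D}_{\m{V},0}$, and it is cancelled by the $h_\xi^{\m{V}}$ term, so the combined operator is surjective onto the fibrewise-mean-zero functions. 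The step I expect to be the main obstacle is the uniformity of the right inverse $Q_x$: the bound $\norm{Q_x}<C$ must hold uniformly in $x\in V_\pi$ and, since everything is fibrewise, also uniformly over $b\in B$. I would establish it as in Theorem \ref{Thm:Kuranishi}: take $\lambda_1>0$ to be the infimum over $b\in B$ of the smallest fibrewise eigenvalue of the $g_0$-Laplacian on $C^\infty(R)$ (positive by compactness of $B$), use continuity of $\delbar_x$ in $x$ together with uniform equivalence of $g_x$ and $g_0$ for $x$ in a small ball to obtain a fibrewise Poincar\'e inequality $\norm{\delbar_x\psi}^2_{L^2(g_x)}\ge C\norm{\psi}^2_{L^2(g_x)}$ on $C^\infty(R)$ with $C$ independent of $x$ and $b$, hence $\norm{\m{D}_{\m{V},x}\psi}^2_{L^2(g_x)}\ge C\norm{\psi}^2_{L^2(g_x)}$, and combine this with the fibrewise Schauder estimate, taking $\xi$ to be the fibrewise $L^2(g_x)$-projection of $\varphi$ onto $C^\infty(E)$. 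The same bound survives the passage to the true linearisation, which differs by the $O(|x|)$ term $\langle\nabla_{g_x}\Scal_{\m{V}}(\omega,\Phi(x)),\nabla_{g_x}\cdot\rangle_{g_x}$, after shrinking $V_\pi$. The implicit function theorem then yields, for each $x$, a pair $(\xi_x,\varphi_x)$ depending smoothly on $x$ with $G_x(\xi_x,\varphi_x)=0$; setting $\omega_x:=\omega+i\del\delbar\varphi_x$ this reads $\Scal_{\m{V}}(\omega_x,\Phi(x)) = \widehat{S}_{\m{V}} + h_{\xi_x}^{\m{V}} + \langle\nabla_{g_x}h_{\xi_x}^{\m{V}},\nabla_{g_x}\varphi_x\rangle_{g_x}$, and by the fibrewise change-of-holomorphy-potential formula (\cite[Lemma 2.7]{Hallam_geodesics}, \cite[Lemma 4.10]{Szekelyhidi_book}) the right-hand side is, restricted to each fibre, a holomorphy potential; that is, $\Scal_{\m{V}}(\omega_x,\Phi(x))\in C^\infty(E,I)$ in the sense of the statement, which is (3).
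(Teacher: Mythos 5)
The paper gives no proof of this theorem — it is quoted from \cite[\S4.2]{Ortu_OSCdeformations} and described as a fibrewise version of Theorem \ref{Thm:Kuranishi} — and your proposal is exactly that fibrewise adaptation, matching both the proof of Theorem \ref{Thm:Kuranishi} and the uniform-in-the-fibre Poincar\'e/Schauder argument the paper itself carries out later for Lemma \ref{Lemma:perturbation_Kuranishi_map}, so the approach is correct and essentially the intended one. One caution: your ``identification'' $\mf{k}_\pi\cong C^\infty(E)$ is not the paper's $\mf{k}_\pi$ (the Lie algebra of $K_\pi$ consists of \emph{global} fibre-preserving holomorphic Killing fields and is finite-dimensional, in general a proper subspace of $C^\infty(E)$); the multiplier in your implicit function theorem must range over the full infinite-dimensional space $C^\infty(E)$ of sections of $E$ for the linearisation to surject onto $C^\infty_0(X,\bb{R})$, which is what your surjectivity and projection steps actually use — so only the label, not the argument, needs correcting.
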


We now describe the curvature quantities determined by $\omega$ involved in the definition of optimal symplectic connections:
\begin{enumerate}
\item the \emph{symplectic curvature} is a two-form on $B$ with values in the fibrewise Hamiltonian vector fields defined for $v_1, v_2 \in \mf{X}(B)$ as
\begin{equation*}
F_{\m{H}}(u_1, u_2) = [u_1^\sharp, u_2^\sharp]^{\mrm{vert}},
\end{equation*}
where $u_j^\sharp$ denotes the horizontal lift. Let $\gamma$ be the map which associates to a fibrewise Hamiltonian vector field its fibrewise Hamiltonian function with fibrewise mean value zero.
Thus we consider $\gamma^*(F_{\m{H}})$, which is a two-form on $B$ with values in $C^\infty_0(Y, \bb{R})$, and we pull it back to $Y$;
\item the curvature $\rho$ of the Hermitian connection induced by $(\omega, I)$ on the top wedge power $\ext{m}\m{V}$. We will primarily consider its purely horizontal part $\rho_{\m{H}}$;
\item the curvature of the deformation family, given by a global section $\nu$ of the vector bundle $E \to B$ of relatively holomorphy potentials of $X \to B$ defined as
\begin{equation}\label{Eq:map_nu_relative}
\nu(b) = \nu_b(v_b),
\end{equation}
where $\nu_b$ is the map of Definition \ref{Def:map_nu}.
Given $x \in V_\pi$, $v \in \widetilde{H}^1_{\m{V}}$, we will write $\nu_x(v)$
when we want to underline the dependence of the map $\nu$ on the complex structure $\Phi(x)$ and the deformation $v$.
\end{enumerate}
\begin{definition}
\cite[\S3.3]{Ortu_OSCdeformations}
A relatively K\"ahler metric $\omega$ on $Y \to B$ is called an \emph{optimal symplectic connection} if
\begin{equation}\label{Eq:OSC}
p_E \left(\Delta_{\m{V}}(\Lambda_{\omega_B} \gamma^*(F_{\m{H}}))+ \Lambda_{\omega_B}\rho_{\m{H}}\right) + \frac{\lambda}{2}\nu=0,
\end{equation}
for a positive number $\lambda$.
In the following, we will use the notation $\Theta(\omega, J) = \Delta_{\m{V}}(\Lambda_{\omega_B} \gamma^*(F_{\m{H}}))+ \Lambda_{\omega_B}\rho_{\m{H}}$.
\end{definition}

Equation \eqref{Eq:OSC} is a second-order elliptic equation on the vector bundle $E \to B$ \cite[\S5.3]{Ortu_OSCdeformations}.
It arises as the subleading order term in the expansion of the scalar curvature, as shown by the following proposition \cite[Proposition 5.4]{Ortu_OSCdeformations}.
\begin{proposition}\label{Prop:expansion_scalar_curvature}
The scalar curvature of $(\omega_k, J_s)$ admits an expansion
\[
\begin{aligned}
\Scal(\omega_k, J_s) = \Scal_\m{V}(\omega, J_s) + k^{-1}\left( \Scal(\omega_B) + \Delta_{\m{V}} (\Lambda_{\omega_B} \omega_{\m{H}}) + \Lambda_{\omega_B} \rho_{\m{H}}\right) + O\left(k^{-2}\right).
\end{aligned}
\]
The vertical scalar curvature admits an expansion
\begin{equation*}\label{Eq:mu_pi_expansion}
\Scal_{\m{V}}(\omega, J_s) = \mu_\pi(x_s) = \widehat{S}_b + \frac{s^2}{2} \nu_\pi(v) + O\left(s^3\right).
\end{equation*}
Therefore, by choosing $s^2 = \lambda k^{-1}$ for $\lambda >0$ we can combine them to give the single expansion
\[
\Scal(\omega_k, J_s) = \widehat{S}_b + k^{-1}\left( \psi_B + p_E(\Delta_{\m{V}} (\Lambda_{\omega_B} \omega_{\m{H}}) + \Lambda_{\omega_B} \rho_{\m{H}}) + \frac{\lambda}{2} \nu(v) + \psi_R\right) + O\left(k^{-3/2}\right).
\]
\end{proposition}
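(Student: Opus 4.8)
The plan is to establish the two displayed expansions independently and then merge them by the substitution $s^2 = \lambda k^{-1}$ together with the orthogonal decomposition \eqref{Eq:splitting_function_space}. For the expansion in the polarisation parameter $k$, I would run the standard adiabatic-limit computation for K\"ahler fibrations. Fix $s$ and write $d = n-m$ for the dimension of the base. Since $\omega$ is $\omega$-orthogonally split as $\omega = \omega_F + \omega_{\m{H}}$ with $\omega_F$ a section of $\ext{2}\m{V}^*$ and $\omega_{\m{H}}$ a section of $\ext{2}\m{H}^*$, and $\omega_B$ is pulled back from $B$ and hence also purely horizontal, one has $\omega_k^n = \binom{n}{m}\,\omega_F^m \wedge (\omega_{\m{H}}+k\omega_B)^d$; the identity $d\,\beta\wedge\omega_B^{d-1}/\omega_B^d = \Lambda_{\omega_B}\beta$ then gives $(\omega_{\m{H}}+k\omega_B)^d = k^d\omega_B^d\bigl(1 + k^{-1}\Lambda_{\omega_B}\omega_{\m{H}} + O(k^{-2})\bigr)$. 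Applying $-i\del\delbar\log$, the constant $\log(k^d)$ drops out and
\[
\mrm{Ric}(\omega_k) = \mrm{Ric}(\omega_B) + \rho - k^{-1}i\del\delbar(\Lambda_{\omega_B}\omega_{\m{H}}) + O(k^{-2}),
\]
with $\rho$ the curvature of the Hermitian connection on $\ext{m}\m{V}$. Contracting with $\Lambda_{\omega_k}$, which is block diagonal for the splitting $TX = \m{V}\oplus\m{H}$ and acts as $\Lambda_\omega$ on the vertical block and as $k^{-1}\Lambda_{\omega_B}+O(k^{-2})$ on the horizontal block, the vertical contraction of $\mrm{Ric}$ produces the leading term $\Scal_{\m{V}}(\omega, J_s)$, the vertical contraction of $-k^{-1}i\del\delbar(\Lambda_{\omega_B}\omega_{\m{H}})$ produces $k^{-1}\Delta_{\m{V}}(\Lambda_{\omega_B}\omega_{\m{H}})$, and the horizontal contractions of $\mrm{Ric}(\omega_B)$ and $\rho$ produce $k^{-1}\Scal(\omega_B)$ and $k^{-1}\Lambda_{\omega_B}\rho_{\m{H}}$, with all remaining contributions $O(k^{-2})$; this is the first expansion.

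For the expansion of the vertical scalar curvature in $s$, I would use the relative Kuranishi theorem. After applying fibre-preserving, $\omega$-preserving diffeomorphisms, which leave $\Scal_{\m{V}}$ unchanged, Theorem \ref{Thm:relative_Kuranishi} lets me write $J_s = \Phi(x_s)$ for a smooth path $x_s$ in $V_\pi$ with $x_0 = 0$ and $\dot x_0 = v$ the harmonic representative in $\widetilde{H}^1_{\m{V}}$ of the first-order deformation of $\m{X} \to B\times S$. Over each fibre $X_b$, the map $x \mapsto \Scal(\omega_b, \Phi(x)|_{X_b}) - \widehat{S}_b$ is (a normalisation of) the Donaldson--Fujiki moment map \cite{Donaldson_momentmap, Fujiki_momentmap}, the point $0$ is a fixed point of the $K_\pi$-action, and $\Phi(0) = I$ is fibrewise cscK. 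I would then invoke the computation behind Definition \ref{Def:map_nu} and the formula following it (due to Inoue \cite{Inoue_ModuliSpaceFano}, cf.\ \cite{Szekelyhidi_deformations, Bronnle_PhDthesis}): along the harmonic slice $\widetilde{H}^1_{\m{V}}$, the first variation of this moment map at $I$ vanishes and its Hessian at $I$ is the quadratic form $\nu$. Using the vanishing of the first variation to replace the Taylor expansion of $\Scal_{\m{V}}(\omega, \Phi(x_s))$ along $x_s$ by the one along $sv$ — the discrepancy $x_s - sv = O(s^2)$ then contributing only at order $s^3$ — and to discard the linear term, I obtain $\Scal_{\m{V}}(\omega, J_s) = \widehat{S}_b + \tfrac{s^2}{2}\nu_\pi(v) + O(s^3)$ with $\nu_\pi(v) \in C^\infty(E)$ as in \eqref{Eq:map_nu_relative}.

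Finally, substituting $s^2 = \lambda k^{-1}$ turns the $s$-expansion into $\Scal_{\m{V}}(\omega, J_s) = \widehat{S}_b + \tfrac{\lambda}{2}k^{-1}\nu(v) + O(k^{-3/2})$, and feeding this into the $k$-expansion (whose $O(k^{-2})$ remainder is absorbed into $O(k^{-3/2})$) gives $\Scal(\omega_k, J_s) = \widehat{S}_b + k^{-1}\bigl(\Scal(\omega_B) + \Delta_{\m{V}}(\Lambda_{\omega_B}\omega_{\m{H}}) + \Lambda_{\omega_B}\rho_{\m{H}} + \tfrac{\lambda}{2}\nu(v)\bigr) + O(k^{-3/2})$; decomposing the $k^{-1}$-coefficient according to \eqref{Eq:splitting_function_space} — its $C^\infty(B)$-part being $\psi_B$ (which collects $\Scal(\omega_B)$ and the fibrewise averages of the remaining terms), its $C^\infty(E)$-part being $p_E(\Delta_{\m{V}}(\Lambda_{\omega_B}\omega_{\m{H}}) + \Lambda_{\omega_B}\rho_{\m{H}}) + \tfrac{\lambda}{2}\nu(v)$ since $\nu(v)$ already lies in $C^\infty(E)$, and its $C^\infty(R)$-part being $\psi_R$ — yields the single combined expansion. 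I expect the main obstacle to be the $k$-expansion: identifying the $k^{-1}$-coefficient of $\omega_k^n/(k^d\omega_F^m\wedge\omega_B^d)$ precisely as $\Lambda_{\omega_B}\omega_{\m{H}}$, correctly handling the mixed-bidegree components of $\mrm{Ric}(\omega_k)$ under contraction with the block-diagonal $\Lambda_{\omega_k}$, and checking that the lower-order terms are genuinely $O(k^{-2})$ so that, after the rescaling $s^2 = \lambda k^{-1}$, they remain below the $O(k^{-3/2})$ error coming from the $s$-expansion; the vanishing of the first variation and the identification of the Hessian with $\nu$ in the $s$-expansion also require care, but these are furnished by the Donaldson--Fujiki moment-map picture together with the formula following Definition \ref{Def:map_nu}.
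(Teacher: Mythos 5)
The paper does not actually prove this proposition: it is quoted verbatim from \cite[Proposition 5.4]{Ortu_OSCdeformations}, so there is no in-text argument to compare against. That said, your reconstruction follows the route taken in that reference and is essentially correct: the $k$-expansion is the standard adiabatic computation (expand $\omega_k^{n+m}$ using the vertical--horizontal splitting, apply $-i\del\delbar\log$ to get the Ricci form as $\rho + \pi^*\mrm{Ric}(\omega_B) - k^{-1}i\del\delbar(\Lambda_{\omega_B}\omega_{\m{H}}) + O(k^{-2})$, and contract with the block-diagonal $\Lambda_{\omega_k}$), and the $s$-expansion is exactly the second-order Taylor expansion of the fibrewise Donaldson--Fujiki moment map at the fixed point $I$, with vanishing first variation and Hessian $\nu$ as in the formula following Definition \ref{Def:map_nu}. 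Two small points deserve explicit mention: the $O(k^{-2})$ remainder in the $k$-expansion must be uniform in $s$ near $0$ (it is, by smoothness and compactness) so that the substitution $s^2=\lambda k^{-1}$ is legitimate; and the statement that $\Scal_{\m{V}}(\omega,J_s)-\widehat S_b$ has no $C^\infty(R)$-component at order $s^2$ uses the Kuranishi normalisation of Theorem \ref{Thm:relative_Kuranishi}(3) (equivalently, one reads $\mu_\pi$ as the $E$-valued moment map), which you do invoke but should make load-bearing. Otherwise the argument is sound.
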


\begin{remark}
The equation with $\nu=0$ is the condition for an optimal symplectic connection in the sense of \cite{DervanSektnan_OSC1}, where all the fibres are required to be cscK, i.e.\ when $X=Y$.
\end{remark}

The linearisation of the equation at a solution is given by the operator \cite[\S5.3]{Ortu_OSCdeformations}
\[
\widehat{\m{L}} = \m{R}^*\m{R} + \m{A}^*\m{A}
\]
on $C^\infty(E)$, where
\begin{equation}\label{Eq:operator_R}
\m{R}(\varphi_E) = \delbar_B \nabla_{\m{V}}^{1,0} \varphi_E
\end{equation}
and
\begin{equation}\label{Eq:operator_mA}
\m{A}(\varphi_E) = \dd_0\Phi\left(A_{\varphi_E} v\right).
\end{equation}
The adjoint is computed with respect to $\omega_F + \omega_B$. Here $\nabla_{\m{V}}^{1,0} \varphi_E$ is a section of the holomorphic tangent bundle; the vertical part of $\delbar \nabla_{\m{V}}^{1,0} \varphi_E$ vanishes since $\varphi_E \in C^\infty(E)$ and the horizontal part is denoted by the expression \eqref{Eq:operator_R}.
The operator \eqref{Eq:operator_R} can be described as follows \cite[\S4.3]{DervanSektnan_OSC1}: let $\m{D}_k^*\m{D}_k$ be the Lichnerowicz operator with respect to the K\"ahler metric $\omega_k$. It admits a power series expansion in negative powers of $k$:
\begin{equation*}
\m{D}_k^*\m{D}_k = \m{L}_0 + k^{-1}\m{L}_1 + O\left(k^{-2}\right),
\end{equation*}
where $\m{L}_0$ is the \emph{vertical} Lichnerowicz operator $\m{D}_{\m{V}}^*\m{D}_{\m{V}}$. Then for $\varphi, \psi$ fibrewise holomorphy potentials
\begin{equation*}
\int_X \varphi \m{L}_1(\psi) \omega^m \wedge \omega_B^n = \int_X \langle \m{R}\varphi, \m{R}\psi \rangle_{\omega_F+\omega_B} \omega^m\wedge \omega_B^n.
\end{equation*}
This means that the operator $\m{R}^*\m{R}$ can actually be seen as $p_E \circ \m{L}_1$ restricted to $\m{C}^\infty_E(X)$. The kernel of $\m{R}$, thus of $\m{R}^*\m{R}$, consists of fibrewise holomorphy potentials which are global holomorphy potentials on $X$ with respect to $\omega_k$, and it is independent of $k$.
The operator \eqref{Eq:operator_mA} is described as
\begin{equation}\label{Eq:linearised_relative_map_nu}
\langle \dd_v \nu (\m{L}_{\nabla_{\m{V}}{\varphi}}v), \psi \rangle = \int_X \langle \dd_0 \Phi \left(A_{\varphi}v\right), \dd_0 \Phi\left(A_\psi v\right) \rangle_{\omega_F} \omega_F^m \wedge \omega_B^n,
\end{equation}
where $\varphi, \psi \in C^\infty(E)$. A function $\psi \in C^\infty(E)$ is in the kernel of $A$ if and only if $\psi$ is a fibrewise holomorphy potential with respect to all $J_s$, i.e. $\psi \in C^\infty(E, J_s)$.
Thus the kernel of $\widehat{\m{L}}$ consists of those functions $\psi \in C^\infty(E, J_0)$ such that  $\delbar_s (\nabla_{s, \m{V}}^{1,0}\psi )= 0$ for all $s$ \cite[Proposition 5.8]{Ortu_OSCdeformations}.

\subsection{Automorphisms of the optimal symplectic connection equation}
Let $(X, H_X) \to B$ be a relatively cscK fibration. Consider the complex group $\mrm{Aut}(X, H_X)$ of automorphisms of $X$ lifting to $H_X$. Its Lie algebra is given by
the holomorphic vector fields which vanish somewhere, and we denote it by $\mf{h}_0$.
Recall from \S \ref{Subsec:OSC} the group of relative Hamiltonian isometries $K_\pi$.
\begin{definition}
The group of relative automorphisms is
\[
\mrm{Aut}(\pi_X) = \set*{f \in \mrm{Aut}(X, H_X) \mid \pi_X \circ f = \pi_X}.
\]
\end{definition}
We denote by $\mf{h}_\pi$ the Lie algebra of $\mrm{Aut}(\pi_X)$ and $\mf{k}_\pi$ the Lie algebra of $K_\pi$. An element in $\mf{h}_\pi$ is a holomorphic vector field which vanishes somewhere and whose flow lies in $\mrm{Aut}(\pi_X)$, while an element of $\mf{k}_\pi$ is a holomorphic vector field which corresponds to a Killing vector field under the identification of the real tangent bundle $T_{\bb{R}}X$ with the holomorphic tangent bundle $T^{1,0}X$.
The following fibration version of Theorem \ref{Thm:Matsushima_criterion} is a result of Dervan and Sektnan \cite{DervanSektnan_OSC1,DervanSektnan_OSC3}.
\begin{theorem}\label{Thm:autom_OSC}
\begin{enumerate}
\item Let $\omega$ be an optimal symplectic connection on the relatively cscK fibration $X \to B$ and let $f \in \mrm{Aut}(\pi_X)$. Then $f^*\omega$ is an optimal symplectic connection.
\item Let $\omega$ be an optimal symplectic connection on $X \to B$. Then
\[
\mf{h}_\pi = \mf{k}_\pi \oplus I \mf{k}_\pi.
\]
\end{enumerate}
\end{theorem}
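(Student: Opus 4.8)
The plan is to run, in the fibred setting, the classical Matsushima--Lichnerowicz argument behind Theorem~\ref{Thm:Matsushima_criterion}, with the operator $\widehat{\m{L}}=\m{R}^*\m{R}+\m{A}^*\m{A}$ of \eqref{Eq:operator_R}--\eqref{Eq:linearised_relative_map_nu} playing the role of the Lichnerowicz operator $\m{D}^*\m{D}$. For part (1), let $f\in\Aut(\pi_X)$. Since $\pi_X\circ f=\pi_X$, the biholomorphism $f$ covers $\mathrm{id}_B$ and preserves each fibre $X_b$, so $f^*\omega$ is a relatively K\"ahler form in $c_1(H_X)$, and $(f^*\omega)|_{X_b}=(f|_{X_b})^*(\omega|_{X_b})$ is cscK because scalar curvature is natural under biholomorphisms; thus $f^*\omega$ is relatively cscK. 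Every ingredient of \eqref{Eq:OSC} is natural under $f$: the $\omega$-orthogonal splitting $TX=\m{V}\oplus\m{H}^\omega$ is carried by $f$ to the $f^*\omega$-orthogonal splitting (as $f$ preserves $\m{V}$); the curvature quantities $F_{\m{H}},\rho_{\m{H}}$, the operators $\Delta_{\m{V}},\Lambda_{\omega_B}$ (with $\omega_B$ unchanged) and the section $\nu$ of \eqref{Eq:map_nu_relative} transform equivariantly, since $f$ identifies the fibrewise Kuranishi data of $(\omega,I)$ with that of $(f^*\omega,I)$; and $p_E$ commutes with $f^*$ because $f$ identifies $E(\omega)$ with $E(f^*\omega)$. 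Hence the left-hand side of \eqref{Eq:OSC} for $(f^*\omega,I)$ is $f^*$ of the one for $(\omega,I)$, so $f^*\omega$ is again an optimal symplectic connection.

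For part (2), the inclusion $\mf{k}_\pi\oplus I\mf{k}_\pi\subseteq\mf{h}_\pi$ is formal: $\mf{h}_\pi$ is a complex Lie algebra, $\mf{k}_\pi\subseteq\mf{h}_\pi$ because a relative Hamiltonian isometry lifts to $H_X$, hence also $I\mf{k}_\pi\subseteq\mf{h}_\pi$; the sum is direct since if $\xi$ and $I\xi$ are both Killing then the fibrewise mean-value-zero holomorphy potential of $\xi$ is at once real and purely imaginary, so $\xi=0$. For the reverse inclusion we argue as Matsushima. Let $\xi\in\mf{h}_\pi$: it is a vertical holomorphic vector field on $X$, fibrewise vanishing somewhere, whose fibrewise mean-value-zero holomorphy potential with respect to $\omega$ is some $h=h_1+ih_2\in C^\infty(E,\bb{C})$, so $\xi=\nabla_{\m{V}}^{1,0}h$. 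Since $h\in C^\infty(E)$ the vertical part of $\delbar\nabla_{\m{V}}^{1,0}h$ vanishes, and since $\xi$ is holomorphic on the total space $X$ (and, when the deformation $v$ is present, along the whole family) its horizontal part $\m{R}h$ vanishes too, so $h\in\ker\m{R}^{\bb{C}}\cap\ker\m{A}^{\bb{C}}=\ker\widehat{\m{L}}^{\bb{C}}$ by the description of $\ker\widehat{\m{L}}$ recalled above.

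Here the hypothesis that $\omega$ solves \eqref{Eq:OSC} enters: by Proposition~\ref{Prop:expansion_scalar_curvature} the operator $\widehat{\m{L}}=\m{R}^*\m{R}+\m{A}^*\m{A}$ is the linearisation of \eqref{Eq:OSC} \emph{at a solution}, with no surviving (non-self-adjoint) first-order term of the form $\langle\nabla(\Theta+\tfrac{\lambda}{2}\nu),\nabla_{\m{V}}\,\cdot\,\rangle$ --- precisely the way $\langle\nabla\Scal,\nabla\,\cdot\,\rangle$ drops out of the linearised scalar curvature operator when $\Scal$ is constant. Since $\m{R}^*\m{R}+\m{A}^*\m{A}$ is manifestly real and formally self-adjoint, $\ker\widehat{\m{L}}^{\bb{C}}$ is conjugation-invariant, whence $h_1,h_2\in\ker\widehat{\m{L}}$. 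The vector fields $\nabla_{\m{V}}^{1,0}h_1,\nabla_{\m{V}}^{1,0}h_2$ are then vertical and holomorphic on $X$ with real holomorphy potentials, hence lie in $\mf{k}_\pi$ (using part (1) to replace $\omega$, if needed, by another optimal symplectic connection in its $\Aut(\pi_X)$-orbit so that they are honest relative isometries). As $i$ acts as $I$ on $T^{1,0}X$,
\[
\xi=\nabla_{\m{V}}^{1,0}h_1+I\,\nabla_{\m{V}}^{1,0}h_2\in\mf{k}_\pi\oplus I\mf{k}_\pi .
\]

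The principal obstacle is the structural input used above: that at a solution of \eqref{Eq:OSC} the linearisation is exactly $\widehat{\m{L}}=\m{R}^*\m{R}+\m{A}^*\m{A}$ with no residual non-self-adjoint terms, together with the identifications $\ker\widehat{\m{L}}^{\bb{C}}\cong\mf{h}_\pi$ and $\ker\widehat{\m{L}}\cong\mf{k}_\pi$. Establishing this requires a Bochner--Weitzenb\"ock-type computation in the fibred setting --- the analogue of the identity that makes the Lichnerowicz operator real when the scalar curvature is constant --- carried out along the expansion $\m{D}_k^*\m{D}_k=\m{L}_0+k^{-1}\m{L}_1+O(k^{-2})$ and combined with the moment-map interpretation of $\nu$ (Definition~\ref{Def:map_nu}), which is what forces the $\m{A}^*\m{A}$ term to be a genuine sum of squares; in the case $\nu=0$ one has $\m{A}=0$ and this reduces to the statement, already isolated in \cite{DervanSektnan_OSC1}, that $\ker\m{R}$ consists of the global holomorphy potentials on $X$. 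Everything else is naturality and the formal Matsushima dichotomy.
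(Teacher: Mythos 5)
The paper does not actually prove Theorem \ref{Thm:autom_OSC}: it is imported verbatim from Dervan--Sektnan \cite{DervanSektnan_OSC1,DervanSektnan_OSC3}, so there is no internal argument to compare yours against, and your proposal has to stand on its own. Part (1), the naturality argument, is fine in outline: $f$ covers $\mathrm{id}_B$, preserves fibres, intertwines the splittings, the curvature quantities, and the projections $p_{E(\omega)}$, $p_{E(f^*\omega)}$, so the OSC operator is equivariant. The problem is in part (2). The pivotal step is your claim that $\widehat{\m{L}}=\m{R}^*\m{R}+\m{A}^*\m{A}$ is \emph{manifestly} real, so that its complexified kernel is conjugation-invariant and $h=h_1+ih_2$ splits. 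Being a sum of operators of the form $P^*P$, with $P$ complex-linear and the adjoint taken for the Hermitian $L^2$-pairing, makes $\widehat{\m{L}}$ formally self-adjoint and nonnegative, but it does \emph{not} make it commute with complex conjugation: the classical Lichnerowicz operator $\m{D}^*\m{D}=\left(\delbar\nabla^{1,0}\right)^*\delbar\nabla^{1,0}$ is exactly of this shape and is a real operator only when the scalar curvature is constant, because of the surviving term $g^{j\bar k}\,\partial_j\Scal\,\partial_{\bar k}\varphi$. In the fibred setting the realness of the relevant operator at an optimal symplectic connection is precisely the nontrivial Bochner/Lichnerowicz-type identity that constitutes the heart of the Dervan--Sektnan theorem; you first assert it is manifest and then, in your closing paragraph, concede that it requires a Bochner--Weitzenb\"ock computation which you do not carry out. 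As it stands, the conjugation-invariance of $\ker\m{R}^{\bb{C}}$ (equivalently, that $\m{R}h_1=-i\m{R}h_2$ forces $\m{R}h_1=\m{R}h_2=0$) is unproved, and with it the entire decomposition $\mf{h}_\pi=\mf{k}_\pi\oplus I\mf{k}_\pi$.

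Two further points would need repair even granting that identity. First, Theorem \ref{Thm:autom_OSC} concerns the relatively cscK fibration $X\to B$, where the $\nu$-term is absent, so the operator $\m{A}$ plays no role; moreover in the paper the kernel of $\widehat{\m{L}}$ is identified with $\mf{g}_{\pi,v}$ (potentials holomorphic for \emph{all} $J_s$), not with $\mf{h}_\pi$, so the identification $\ker\widehat{\m{L}}^{\bb{C}}\cong\mf{h}_\pi$ you invoke is not the one available and would itself need justification for the operator actually relevant here, namely $p_E\circ\m{L}_1$ (equivalently $\m{R}^*\m{R}$) on $C^\infty(E)\otimes\bb{C}$. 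Second, your final step asserts that a vertical $I$-holomorphic vector field with \emph{real} fibrewise holomorphy potential lies in $\mf{k}_\pi$; but membership in $\mf{k}_\pi$ requires the flow to preserve the relatively K\"ahler form $\omega$ (including its horizontal data), not merely each fibrewise metric, and this global statement is exactly where the optimal symplectic connection hypothesis must be used again; the parenthetical appeal to part (1) does not supply it. So the overall Matsushima-style architecture is the right one, but the two genuinely fibration-theoretic inputs --- the realness of the linearised operator at a solution and the passage from real fibrewise potentials to relative isometries --- are assumed rather than proved.
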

In particular, the theorem implies that $K_\pi^{\bb{C}}$ is contained in $\mrm{Aut}(\pi)$ with equality holding if $(\omega, I)$ is an optimal symplectic connection.

We next prove an analogous result for the optimal symplectic connection equation \eqref{Eq:OSC} on a fibration with K-semistable fibres.
Let $(Y, H_Y)\to (B,L)$ be such a fibration admitting a degeneration to $(X, H_X)\to (B,L)$ and let $V_\pi$ be the Kuranishi space of $\pi_X$.
Let $(\m{X}, \m{H})\to (B, L)\times S$ be the degeneration family. The family of complex structures $\{J_s\}$ with $J_0=I$ corresponds to a family $\{y_s\}$ of points in $V_\pi$ such that $x_0$ is the origin of $V_\pi$.
Let $v$ be the tangent vector at the origin of $V_\pi$ that represents the degeneration family, i.e.\
\[
v = \del_s \vert_{s=0} y_s.
\]
Consider the stabiliser of $v$ for the action of $K_\pi$,
\[
K_{\pi,v} := \set*{f \in K_\pi \mid f^*v = v},
\]
and
\begin{equation}\label{Eq:autom_group_genOSC}
G_{\pi,v} := (K_\pi^{\bb{C}})_v.
\end{equation}
For $f \in G_{\pi,v}$
\[
\del_s\vert_{s=0} y_s = v = f^*v = f^*\left(\del_s\vert_{s=0}y_s\right) = \del_s\vert_{s=0} \left(f^*y_s\right).
\]
Therefore
\[
\del_s\vert_{s=0}(y_s - f^*y_s) = 0,
\]
so $v = f^*v$.
So the elements of $G_{\pi,v}$ are automorphisms of the complex structure $I$ of the relatively cscK degeneration $X\to B$ that preserve the projection $\pi_X$ and are also automorphisms of the complex structures $J_s$.
Moreover, the pull-back of the optimal symplectic connection operator via $f\in G_{\pi,v}$ satisfies
\[
f^* \left( \frac{1}{2} \nu (v) + p_E (\Theta(\omega, I))\right) = \frac{1}{2} \nu (v) + p_E (\Theta(f^*\omega, I)).
\]
Indeed, since $\nu$ is $K_\pi^{\bb{C}}$-equivariant,
\[
f^*\nu(v) = \nu (f^*v) = \nu(v),
\]
and by Theorem \ref{Thm:autom_OSC},
\[
f^*(p_E (\Theta(\omega, I))) = p_E (\Theta(f^*\omega, I)).
\]
We have proven the following.
\begin{lemma}
Let $\omega$ be an optimal symplectic connection and $f \in G_{\pi,v}$. Then $f^*\omega$ is an optimal symplectic connection. Moreover, if $\varphi$ is a fibrewise $I$-holomorphy potential whose flow of the gradient lies in $G_{\pi,v}$, $\varphi$ is in the kernel of the linearisation $\widehat{\m{L}}$.
\end{lemma}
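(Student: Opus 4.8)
The plan is as follows. The first assertion has in fact already been established in the computation carried out just above the statement: the optimal symplectic connection operator $\tfrac{1}{2}\nu(v) + p_E(\Theta(\cdot, I))$ is $G_{\pi,v}$-equivariant, since $\nu$ is $K_\pi^{\bb{C}}$-equivariant and $f^*v = v$ for $f \in G_{\pi,v}$, while $f^*(p_E(\Theta(\omega, I))) = p_E(\Theta(f^*\omega, I))$ by Theorem \ref{Thm:autom_OSC}. Applying $f^*$ to the (vanishing) operator of the optimal symplectic connection $\omega$ therefore shows that the operator of $f^*\omega$ vanishes as well, i.e.\ $f^*\omega$ is an optimal symplectic connection.

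For the second assertion I would argue that $\varphi$ lies in the kernel of each of the two non-negative, self-adjoint summands of $\widehat{\m{L}} = \m{R}^*\m{R} + \m{A}^*\m{A}$, so that $\widehat{\m{L}}\varphi = \m{R}^*\m{R}\varphi + \m{A}^*\m{A}\varphi = 0$. First I would observe that, $\omega$ being an optimal symplectic connection, Theorem \ref{Thm:autom_OSC} gives $G_{\pi,v} \subseteq K_\pi^{\bb{C}} \subseteq \mrm{Aut}(\pi_X)$; hence the hypothesis that the flow of $\nabla_{\omega_k}\varphi$ lies in $G_{\pi,v}$ forces $\nabla_{\omega_k}^{1,0}\varphi$ to be a genuine holomorphic vector field on $X$ preserving $\pi_X$, that is, $\varphi$ is a global holomorphy potential with respect to $\omega_k$. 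By the description of $\ker\m{R}$ recalled in \S\ref{Subsec:OSC}, this gives $\m{R}\varphi = \delbar_B\nabla_{\m{V}}^{1,0}\varphi = 0$.

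Second, I would invoke the property of $G_{\pi,v}$ recorded above the lemma, namely that its elements are automorphisms of every complex structure $J_s$ of the degeneration family. Consequently the flow of $\nabla_{\omega_k}\varphi$ preserves each $J_s$, and restricting it to a fibre $X_b$ shows that $\varphi|_{X_b}$ is a $J_s$-holomorphy potential on the fibre for every $s$, i.e.\ $\varphi \in C^\infty(E, J_s)$ for all $s$. By the characterisation of $\ker\m{A}$ in \S\ref{Subsec:OSC} this gives $\m{A}\varphi = 0$, and we conclude $\widehat{\m{L}}\varphi = 0$; equivalently, $\varphi$ is an element of $C^\infty(E, J_0)$ with $\delbar_s(\nabla_{s,\m{V}}^{1,0}\varphi) = 0$ for all $s$, which is precisely the explicit description of $\ker\widehat{\m{L}}$.

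I do not expect a genuine obstacle here: the whole content is to translate the single hypothesis ``the gradient flow of $\varphi$ lies in $G_{\pi,v}$'' into the two kernel conditions. The one point deserving care is to keep track of which gradient is used --- the $\omega_k$-gradient for $k \gg 0$, whose flow is an honest biholomorphism of $X$ rather than a fibrewise one --- and to notice that membership in $\mrm{Aut}(\pi_X)$ is what controls the horizontal term $\m{R}\varphi$, while being an automorphism of all the $J_s$ is what controls the deformation term $\m{A}\varphi$; thus the two conditions packaged into the definition of $G_{\pi,v}$ are exactly those needed to annihilate $\widehat{\m{L}}\varphi$.
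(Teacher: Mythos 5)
Your proposal is correct and follows essentially the same route as the paper: the first assertion is exactly the equivariance computation the paper carries out immediately before stating the lemma ($f^*\nu(v)=\nu(f^*v)=\nu(v)$ together with Theorem \ref{Thm:autom_OSC}). For the second assertion the paper is terse, but your argument --- translating membership of the gradient flow in $G_{\pi,v}$ into the two conditions $\varphi\in\ker\m{R}$ (global holomorphy potential preserving $\pi_X$) and $\varphi\in\ker\m{A}$ (fibrewise holomorphy potential for all $J_s$), and using that $\widehat{\m{L}}=\m{R}^*\m{R}+\m{A}^*\m{A}$ is a sum of non-negative operators --- is precisely the identification of $\ker\widehat{\m{L}}$ with $\mf{g}_{\pi,v}$ that the paper itself invokes in the proof of the Matsushima-type theorem that follows.
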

Let $\mf{g}_{\pi,v}$ be the Lie algebra of $G_{\pi,v}$, consisting on those holomorphic vector fields whose flow lies in $K_\pi^{\bb{C}}$ and which preserve $v$. In particular, preserving $v$ means that they extend to holomorphic vector fields with respect to all $J_s$.
Let $\mf{k}_{\pi,v}$ be the Lie algebra of $K_{\pi,v}$, of Killing holomorphic vector fields whose flow preserves $v$.
We can then prove a version of Theorem \ref{Thm:Matsushima_criterion} for our setting.

\begin{theorem}
Let $\omega$ be an optimal symplectic connection. Then
\[
\mf{g}_{\pi,v} = \mf{k}_{\pi,v} \oplus I \mf{k}_{\pi,v}.
\]
In particular $K_{\pi,v}$ is a reductive subgroup of $G_{\pi,v}$.
\end{theorem}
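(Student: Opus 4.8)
The plan is to mimic the classical proof of the Matsushima--Lichnerowicz criterion (Theorem \ref{Thm:Matsushima_criterion}), with the r\^ole played there by the Lichnerowicz operator $\m{D}^*\m{D}$ now taken over by the linearisation $\widehat{\m{L}}=\m{R}^*\m{R}+\m{A}^*\m{A}$ of the optimal symplectic connection equation \eqref{Eq:OSC} at the solution $\omega$. In other words, I would reduce the Cartan decomposition of $\mf{g}_{\pi,v}$ to the statement that $\widehat{\m{L}}$ is a real, formally self-adjoint operator, whose kernel complexifies $\mf{k}_{\pi,v}$.

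First I would set up the algebraic dictionary. Since $\omega$ is an optimal symplectic connection, Theorem \ref{Thm:autom_OSC} gives $\mrm{Aut}(\pi_X)=K_\pi^{\bb{C}}$ and $\mf{h}_\pi=\mf{k}_\pi\oplus I\mf{k}_\pi$. Realising $\mf{k}_\pi$ as the space of real fibrewise $I$-holomorphy potentials that are global holomorphy potentials on $X$, and hence $\mf{h}_\pi$ as the corresponding complex-valued potentials, the subalgebra $\mf{g}_{\pi,v}=(\mf{k}_\pi^{\bb{C}})_v$ is cut out by the extra requirement that the potential also be a fibrewise holomorphy potential for every $J_s$, i.e.\ that it lie in $\ker\m{A}$. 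Combined with the description of $\ker\m{R}$ recalled in \S\ref{Subsec:OSC}, this identifies $\mf{g}_{\pi,v}$ with the complexification of $\ker\widehat{\m{L}}$ and $\mf{k}_{\pi,v}$ with the real kernel $\ker\widehat{\m{L}}\cap C^\infty(E,\bb{R})$; the inclusion $\mf{k}_{\pi,v}\oplus I\mf{k}_{\pi,v}\subseteq\mf{g}_{\pi,v}$ is then immediate.

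The heart of the argument is that $\widehat{\m{L}}$ is a \emph{real} operator on $C^\infty(E)$, i.e.\ it preserves real-valued sections. For $\m{A}^*\m{A}$ this is visible from the description \eqref{Eq:linearised_relative_map_nu} of the associated quadratic form, which is manifestly real and symmetric on real sections; for $\m{R}^*\m{R}$ it follows, exactly as for the reality of $\m{D}^*\m{D}$ at a cscK metric, from the fibrewise cscK condition together with the fact (recalled in \S\ref{Subsec:OSC}) that $\m{R}^*\m{R}=p_E\circ\m{L}_1$ with $\m{L}_1$ the subleading term of $\m{D}_k^*\m{D}_k$ --- here it is essential that $\omega$ solves \eqref{Eq:OSC}, so that the linearisation of \eqref{Eq:OSC} carries no first-order non-self-adjoint remainder, precisely the analogue of the vanishing of the $\langle\nabla\Scal,\nabla\cdot\rangle$ term at a cscK metric. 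Granting this, take $\xi\in\mf{g}_{\pi,v}$ with complex potential $\varphi=\varphi_1+i\varphi_2$, $\varphi_j\in C^\infty(E,\bb{R})$; then $\widehat{\m{L}}\varphi=0$ and reality force $\widehat{\m{L}}\varphi_1=\widehat{\m{L}}\varphi_2=0$, whence $0=\langle\widehat{\m{L}}\varphi_j,\varphi_j\rangle=\norm{\m{R}\varphi_j}^2+\norm{\m{A}\varphi_j}^2$ gives $\m{R}\varphi_j=\m{A}\varphi_j=0$, so that each $\varphi_j$ lies in $\ker\widehat{\m{L}}\cap C^\infty(E,\bb{R})$ and thus represents an element of $\mf{k}_{\pi,v}$. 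Hence $\xi\in\mf{k}_{\pi,v}\oplus I\mf{k}_{\pi,v}$, which gives the reverse inclusion. Finally, since $K_{\pi,v}$ is a closed subgroup of the compact group $K_\pi$ and its complexified Lie algebra is $\mf{g}_{\pi,v}$, the identity component of $G_{\pi,v}$ equals $K_{\pi,v}^{\bb{C}}$, so $K_{\pi,v}$ is a maximal compact subgroup and $G_{\pi,v}$ is reductive.

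The main obstacle I anticipate is precisely the reality of $\m{R}^*\m{R}$: one must check with care that passing from $\m{D}_{\m{V}}^*\m{D}_{\m{V}}$ (real because the fibres are cscK) to $\m{R}^*\m{R}=p_E\m{L}_1$ does not introduce an imaginary part, and that at an optimal symplectic connection the full linearisation of \eqref{Eq:OSC} really is $\m{R}^*\m{R}+\m{A}^*\m{A}$ with no lower-order terms breaking self-adjointness. A secondary point to pin down is the tightness of the identification $\mf{g}_{\pi,v}\cong(\ker\widehat{\m{L}})\otimes\bb{C}$, in particular that membership in $\ker\m{R}$ is genuinely equivalent to the potential being that of a global $I$-holomorphic vector field preserving $\pi_X$; this is routine but uses the characterisations of $\ker\m{R}$ and $\ker\m{A}$ from \cite{DervanSektnan_OSC1, Ortu_OSCdeformations}.
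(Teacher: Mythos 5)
Your proposal is correct and follows essentially the same route as the paper: identify $\mf{g}_{\pi,v}$ with the complexified kernel of the linearisation $\widehat{\m{L}}=\m{R}^*\m{R}+\m{A}^*\m{A}$ (via the characterisations of $\ker\m{R}$ and $\ker\m{A}$), then use the reality of $\widehat{\m{L}}$ to split that kernel into real and imaginary parts, giving $\mf{k}_{\pi,v}\oplus I\mf{k}_{\pi,v}$. The paper's argument is simply a compressed version of yours, quoting the description of $\ker\widehat{\m{L}}$ from \cite[Proposition 5.8]{Ortu_OSCdeformations} instead of re-deriving it through the quadratic form $\norm{\m{R}\varphi}^2+\norm{\m{A}\varphi}^2$.
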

\begin{proof}
As recalled in \S\ref{Subsec:OSC}, the kernel $\widehat{\m{L}}$ of the linearisation of the optimal symplectic connection equation consists of fibrewise $I$-holomorphy potentials which are also global $J_s$-holomorphy potentials for all $s$.
From the discussion above, this is in bijection with the Lie algebra $\mf{g}_{\pi,v}$, and $\mf{k}_{\pi,v}$ corresponds to the real vector fields in $\mf{g}_{\pi,v}$. Since $\widehat{\m{L}}$ is a real operator, $\widehat{\m{L}}(u+iv) = 0$ if and only if $\widehat{\m{L}}(u) = 0$ and $\widehat{\m{L}}(v)=0$.
\end{proof}

\subsection{Stability of fibrations}
We recall the notion of fibration degeneration and of fibration stability \cite{DervanSektnan_OSC2, Hallam_geodesics, Hattori_f-stability}.
We then give a notion of stability for a fibration in terms of a numerical invariant associated with fibration degenerations, derived from the adiabatic expansion of the Donaldson-Futaki invariant on the total space.
\begin{definition} 
A \emph{fibration degeneration} for a fibration $(Y, H_Y) \to (B,L)$ is the data of:
\begin{enumerate}
\item a variety $\m{Y}$ together with a flat projective morphism $\m{Y} \to B \times \bb{C}$ with connected fibres and equivariant with respect to a $\bb{C}^*$-action on $\bb{C}$;
\item A $\bb{C}^*$-equivariant line bundle $\m{H} \to \m{Y}$ that is relatively ample over $B \times \bb{C}$;
\item An isomorphism $(\m{Y}_1, \m{H}_1) \simeq (Y, H^r)$ as fibrations over $B$.
\end{enumerate}
A fibration degeneration is called \emph{trivial} if there exists a $\bb{C}^*$-equivariant isomorphism $\m{Y} \simeq Y \times \bb{C}$ with respect to the trivial $\bb{C}^*$-action. A fibration degeneration is called a \emph{product} fibration degeneration if there exists a $\bb{C}^*$-equivariant isomorphism $\m{Y} \simeq Y \times \bb{C}$ where the action is not necessarily trivial.
\end{definition}
A fibration degeneration is essentially a test configuration that preserves the fibration structure.
We next associate a fibration degeneration with a numerical invariant. For each $k \gg 0$ the map $(\m{Y}, \m{H} + kL) \to \bb{C}$ obtained projecting $B \times \bb{C}$ to the second factor is a test configuration for $Y$ with respect to the polarisation $H_Y + kL$.
Thus we expand the Donaldson-Futaki invariant as follows:
\begin{equation}\label{Eq:DF_invariant_expansion}
DF(\m{Y}_k, \m{H} + k L) = W_0 + k^{-1}W_1 + \Ok{-2}.
\end{equation}

\begin{definition}\label{Def:fibration_stability}
We say that the fibration $(Y, H_Y) \to (B, L)$ is
\begin{enumerate}
\item \emph{semistable} if $W_0 \ge 0$ and when $W_0 = 0$ then $W_1 \ge 0$ for all fibration degenerations $(\m{Y}, \m{H}) \to B \times \bb{C}$;
\item \emph{stable} if it is semistable and when $W_0 = 0$ and $W_1 = 0$, then there exists an open $U \subseteq B$ whose complement has codimension at least 2 such that the fibration degeneration normalises to a trivial fibration degeneration over $U$;
\item \emph{polystable} if it is semistable and when $W_0 = 0$ and $W_1 = 0$, then there exists an open $U \subseteq B$ whose complement has codimension at least 2 such that the fibration degeneration normalises to a product fibration degeneration over $U$.
\end{enumerate}
\end{definition}

\section{Stability and optimal symplectic connections}
Let $Y\to B$ be a fibration with K-semistable fibres and let $\m{X} \to B \times S$ be a degeneration family to a relatively cscK fibration $X \to B$, with relatively cscK metric $(\omega,I)$.
The volume of the K\"ahler form $\omega_k$ admits the following expansion in powers of $k$:
\begin{equation}\label{eq:expansions}
\omega_k^{n+m} = k^n \binom{n+m}{n}\omega^m\wedge \omega_B^n + k^{n-1}\binom{n+m}{n-1}\omega^{m+1}\wedge\omega_B^{n-1}+ \Ok{n-2}.
\end{equation}

Let $V_\pi$ be the relative Kuranishi space defined in Theorem \ref{Thm:relative_Kuranishi} and let $y^0$ be the point of $V_\pi$ which corresponds to $J$ via the Kuranishi map $\Phi$ \ref{Eq:rel_Kuranishi_map} and $y^0_s$ be the degeneration family. In particular, the origin of $V_\pi$ corresponds to the relatively cscK fibration $X\to B$.
Let
\[
v = \del_s\vert_{s=0} y^0_s.
\]

The following result \cite[Lemma 3.7]{Ortu_OSCmoduli} describes the subspace of $V_\pi$ which parametrises relatively K-semistable fibrations with smooth fibres admitting a degeneration to a relatively cscK fibration.
\begin{lemma}\label{Lemma:openness_setting}
There exists a locally closed subvariety $V_\pi^+$ of the relative Kuranishi space $V_\pi$ such that, for any $y'\in V^+_\pi$ the fibration $Y'=(M, \omega, \Phi(y'))\to B$ degenerates to a fibration $X'=(M, \omega, I') \to B$ satisfying the following properties:
\begin{enumerate}
\item $(\omega, I')$ is relatively cscK;
\item the groups $\mrm{Aut}(X'_b, H_b')$ are isomorphic for all $b \in B$.
\end{enumerate}
\end{lemma}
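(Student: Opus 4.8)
The plan is to reduce the statement to the deformation theory of cscK metrics applied \emph{fibrewise}, combined with two standard features of GIT: semistability is an open condition, while the conjugacy type of the stabiliser of the closed orbit contained in an orbit closure jumps only along locally closed sets. First I would assemble the fibrewise Kuranishi data. For each $b\in B$ the fibre $X_b$ is a cscK manifold with $K_b:=\mrm{Isom}_0(X_b,\omega_b)$ having complexification $\Aut_0(X_b,H_b)$; applying Theorems \ref{Thm:Kuranishi} and \ref{Thm:DervanHallam} to $X_b$ yields a Kuranishi ball $V_b\subset\widetilde{H}^1(X_b)$, an equivariant holomorphic map $\Phi_b$, and a moment map $\mu_b$ for the $K_b$-action whose zeroes modulo $K_b^{\bb{C}}$ parametrise the cscK deformations of $X_b$. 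By Assumption \ref{Assumption2} the Lie algebras $H^0(X_b,T^{1,0}X_b)$ are all isomorphic, so these data glue into a holomorphic bundle over $B$ of fibrewise Kuranishi spaces acted on by a bundle of reductive groups, and any $y'\in V_\pi$ restricts to a holomorphically varying family $b\mapsto\Phi(y')|_{X_b}\in V_b$.

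Next I would describe the candidate locus. By the cscK deformation theory \cite{Szekelyhidi_deformations,Bronnle_PhDthesis,DervanHallam} and the Kempf--Ness theorem, the fibre $(M_b,\omega_b,\Phi(y')|_{X_b})$ admits a degeneration to a cscK manifold precisely when $\Phi(y')|_{X_b}$ is GIT-semistable for the $K_b^{\bb{C}}$-action on $V_b$, equivalently when $\overline{K_b^{\bb{C}}\cdot\Phi(y')|_{X_b}}$ meets $\mu_b^{-1}(0)$; the unique closed orbit in that closure contains a cscK point $X'_b$ whose automorphism group is the stabiliser of that orbit. One can also realise the degeneration $Y'_b\rightsquigarrow X'_b$ by running the fibrewise moment-map flow, whose convergence and the continuity of its limit are controlled exactly as in \S\ref{subsec:momentmapflow}. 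I would then take $V_\pi^+$ to be the set of $y'\in V_\pi$, near the origin, such that for every $b\in B$ the point $\Phi(y')|_{X_b}$ is semistable and the stabiliser of its closed orbit has conjugacy type in $K_b^{\bb{C}}$ independent of $b$. Semistability is open in $(y',b)$ and $B$ is compact, so the first requirement is open in $y'$; the GIT stratification by stabiliser type, together with compactness of $B$ (a tube-lemma argument) and the fact that only finitely many stabiliser types occur near the origin, makes the second requirement locally closed. Since $V_\pi$ is itself a complex analytic subspace of the ball in $\widetilde{H}^1_{\m{V}}$, the intersection $V_\pi^+$ is a locally closed subvariety.

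It then remains to verify that for $y'\in V_\pi^+$ the fibrewise degenerations assemble into a holomorphic submersion $X'=(M,\omega,I')\to B$ with $(\omega,I')$ relatively cscK and with pairwise isomorphic fibre automorphism groups. The isomorphism of the $\Aut(X'_b,H'_b)$ is automatic, since constancy of the stabiliser type was imposed; the relatively cscK property of the central fibre follows from Theorem \ref{Thm:relative_Kuranishi}(3) after the usual perturbation of the relatively K\"ahler metric. For the assembly, I would realise each fibrewise degeneration inside the fibrewise Kuranishi family --- by a one-parameter subgroup of $K_b^{\bb{C}}$, or by the fibrewise moment-map flow --- and use the bundle structure over $B$ together with compactness of $B$ to conclude that these can be chosen to vary holomorphically in $b$, so that they glue to a family $(\m{X},\m{H})\to(B,L)\times S$ exhibiting the required fibration degeneration.

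The step I expect to be the main obstacle is precisely this last one: showing that the destabilising fibrewise degenerations do not ``jump'' as $b$ varies and therefore descend to an honest fibration over all of $B$. The point is that the relevant degeneration is determined by the position of $\Phi(y')|_{X_b}$ relative to the (locally constant) GIT picture of $V_b$, so that on the locus $V_\pi^+$, where the semistability and stabiliser-type strata are constant in $b$, the construction is uniform; Assumption \ref{Assumption2} and the bundle of fibrewise Kuranishi data are exactly what make this uniformity available. A subsidiary point is keeping the GIT strata honestly algebraic rather than merely analytic, so that $V_\pi^+$ is a genuine subvariety; this is arranged by working with the universal family over $V_\pi$ and invoking properness.
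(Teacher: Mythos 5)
First, note that the paper does not actually prove this lemma; it is imported verbatim from \cite[Lemma 3.7]{Ortu_OSCmoduli}, so I am judging your argument on its own terms. Your overall strategy --- fibrewise Kuranishi theory for the cscK fibres, openness of semistability, and a stratification by stabiliser type --- is the right circle of ideas. However, the step you yourself flag as the main obstacle, namely assembling the fibrewise degenerations $Y'_b\rightsquigarrow X'_b$ into a single holomorphic family $(\m{X},\m{H})\to (B,L)\times S$, is a genuine gap, and the resolution you sketch does not close it. The limit of the fibrewise moment map flow depends only \emph{continuously} on the initial data (this is exactly the Duistermaat input used in Proposition \ref{prop:stabilityflow}), not holomorphically; and a destabilising one-parameter subgroup of $K_b^{\bb{C}}$ is neither unique nor canonically chosen, so ``the GIT picture of $V_b$ is locally constant in $b$'' does not by itself produce a degeneration varying holomorphically over $B$. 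The way to avoid this problem altogether is not to go fibrewise: work from the start with the global relative Kuranishi space $V_\pi$, the global groups $K_\pi$, $K_\pi^{\bb{C}}$, and the universal family over $V_\pi$ provided by Theorem \ref{Thm:relative_Kuranishi}. A degeneration of $Y'$ compatible with the fibration structure is then a path in $V_\pi$ from $y'$ to a point $x'$ with $\Phi(x')$ relatively cscK (the pair $(x_y,v_y)$ of the paper), and restricting the universal family to that path automatically yields a holomorphic family over $B\times S$; the fibrewise statements are obtained by restriction rather than reassembled from below. A smaller glossed point of the same flavour: the restriction of $y'\in\widetilde{H}^1_{\m{V}}$ to a fibre is not a priori an element of the fibrewise harmonic slice $V_b$, since the gauge in \eqref{Eq:H_tilde_cscK} is fixed by a global metric; one must project, holomorphically in $b$, before invoking the fibrewise GIT picture.

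The second issue is the local closedness of your candidate $V_\pi^+$. The tube lemma handles the semistability part (an intersection over compact $B$ of \emph{open} conditions is open), but the condition ``the stabiliser type of the closed orbit is independent of $b$'' is an intersection over all $b\in B$ of \emph{locally closed} conditions, and such intersections are not locally closed in general; finiteness of the strata does not repair this. What does work is to replace conjugacy type by the upper semicontinuous function $(y',b)\mapsto \dim H^0(X'_b,T^{1,0}X'_b)$: since the projection $V_\pi\times B\to V_\pi$ is both open and closed ($B$ compact), the locus $\{y':\dim\le d\ \text{for all}\ b\}$ is open and the locus $\{y':\dim\ge d\ \text{for all}\ b\}$ is closed, so their intersection is locally closed. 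One then upgrades constancy of the dimension to the isomorphism of the Lie algebras $H^0(X'_b,T^{1,0}X'_b)$ (which, as the paper notes, is what Assumption \eqref{Assumption2} really requires) using connectedness of $B$ and rigidity of the reductive Lie algebras appearing near the origin. Your writeup asserts the conclusion of this argument but does not supply it, and as stated the ``tube-lemma argument'' proves the wrong thing.
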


\subsection{A finite-dimensional moment map on fibrations}\label{Sec:momentmap}
In this section, we prove that optimal symplectic connections can be described as zeroes of a moment map on $V_\pi^+$: the strategy is the same as that illustrated in \S \ref{Sec:Sz_def_theory} for the cscK equation.
We begin by describing a symplectic form on $V_\pi^+$, defined as follows: given two tangent vectors $\alpha_1, \alpha_2$ at a point $y \in V_\pi^+ \subset V_\pi$, the differential of the Kuranishi map \ref{Eq:rel_Kuranishi_map} is an injective map
\[
\dd_y\Phi : T_yV_\pi \hookrightarrow \widetilde{H}^1_{\m{V}}
\]
inside a space of $(0,1)$-forms with values in the $(1,0)$-vertical tangent bundle of $X \to B$.
Then the Hermitian metric $\omega_F+\omega_B$ induces an Hermitian metric on $T_yV_\pi^+$ whose imaginary part is given by:
\begin{equation}\label{Eq:WP_metric}
\langle \alpha_1,\alpha_2\rangle_{\omega_{F}+\omega_B} = \int_{X} \Lambda_{\omega_{F}+\omega_B}\mrm{Tr}_{\omega_{F}} (\alpha_1\overline{\alpha_2}) \omega^m\wedge\omega_B^n.
\end{equation}

\begin{proposition}[{\cite[Theorem 4.7]{Ortu_OSCmoduli}}]
The 2-form 
\[
\Omega_y (\alpha_1, \alpha_2) = \langle J_y\alpha_1, \alpha_2\rangle_{\omega_F+\omega_B},
\]
where $J_y = \Phi(y)$, is a Weil--Petersson type K\"ahler form on $V_\pi^+$.
\end{proposition}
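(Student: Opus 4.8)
The plan is to verify the three requirements for $\Omega$ to be a K\"ahler form on $V_\pi^+$: that it is of type $(1,1)$ and positive-definite with respect to $J_y$ (pointwise, essentially algebraic statements), and that it is closed (the global content). The last point is the heart of the matter and is handled exactly as in the Fujiki--Schumacher fibre-integration construction \cite{FujikiSchumacher_moduli_cscK} already invoked for the cscK case in \S\ref{Sec:Sz_def_theory}.

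For the pointwise part, note that $\Omega_y$ is the fundamental $2$-form associated to the Hermitian metric on $T_yV_\pi^+$ obtained by restricting the $L^2$-inner product $\langle\cdot,\cdot\rangle_{\omega_F+\omega_B}$ of \eqref{Eq:WP_metric} along the differential $\dd_y\Phi$ of the Kuranishi map. Since $\Phi$ is holomorphic (Theorem \ref{Thm:relative_Kuranishi}) and $J_y=\Phi(y)$ is the complex structure of $V_\pi^+$ at $y$, the map $\dd_y\Phi$ is $\bb{C}$-linear and injective, so this metric is a genuine $J_y$-Hermitian positive-definite metric on the smooth locus of $V_\pi^+$, and $\Omega_y$ is a positive $(1,1)$-form there, in particular non-degenerate. (On the singular part of the locally closed subvariety $V_\pi^+$ of Lemma \ref{Lemma:openness_setting}, one takes $\Omega$ in the usual sense of K\"ahler metrics on complex spaces, via a local holomorphic embedding.)

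To prove closedness, I would pass to the universal family $\varpi\colon\m{U}\to V_\pi^+$ whose fibre over $y$ is the total space $(M,\omega,\Phi(y))\to B$; since all of these are fibrations over $B$, one regards $\m{U}$ as a smooth proper family over $B\times V_\pi^+$, diffeomorphic to $M\times V_\pi^+$, carrying the integrable complex structure $\bb{J}$ with $\bb{J}\vert_{\m{U}_y}=\Phi(y)$. Pulling back the fixed relatively K\"ahler form $\omega$ and the base form $\omega_B$ gives closed forms $\omega_{\m{U}},\omega_{B,\m{U}}$ on $\m{U}$, and the Hermitian connection induced by $\omega_{\m{U}}$ on the fibrewise anticanonical line bundle $\bigwedge^m\m{V}$ has a closed curvature form $\rho_{\m{U}}$. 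I would then express $\Omega$ as a fibre integral $\int_{\m{U}/V_\pi^+}\Psi$, where $\Psi$ is a closed form on $\m{U}$ of real degree $2(n+m+1)$ built as a fixed polynomial in $\omega_{\m{U}},\omega_{B,\m{U}},\rho_{\m{U}}$ with coefficients dictated by the subleading term of the scalar-curvature expansion of Proposition \ref{Prop:expansion_scalar_curvature}, mirroring the cscK Weil--Petersson form written just before Theorem \ref{Thm:DervanHallam}. Since $\Psi$ is closed and the fibres of $\varpi$ are compact without boundary, fibre integration commutes with $d$, so the resulting $2$-form on $V_\pi^+$ is closed.

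The step I expect to be the main obstacle is checking that this fibre integral really does reproduce the $L^2$-expression \eqref{Eq:WP_metric}. This is a second-variation computation: a tangent vector $v$ at $y$ is identified via $\dd_y\Phi$ with the vertical Kodaira--Spencer tensor $\alpha$, and differentiating the fibre integral twice in the $V_\pi^+$-directions -- using that $\omega_{\m{U}}$ restricts to $\omega$ on each fibre, together with the standard Kodaira--Spencer and Hodge-theoretic identities -- yields $\int_{X}\Lambda_{\omega_F+\omega_B}\mrm{Tr}_{\omega_F}(\alpha_1\overline{\alpha_2})\,\omega^m\wedge\omega_B^n$ up to the chosen constants, which is precisely \eqref{Eq:WP_metric}; equivalently, it identifies $\Omega$ with the $\Phi$-pullback of the $L^2$-metric on $\scr{J}_\pi$, closed by the same mechanism. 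The genuinely delicate points are the bookkeeping of the constants in the relative setting over $B$ and the reduction to the smooth locus of the possibly singular $V_\pi^+$. Combining this identification with the pointwise properties shows that $\Omega$ is a K\"ahler form on $V_\pi^+$.
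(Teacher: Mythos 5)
The paper does not actually prove this statement here; it is quoted from \cite[Theorem 4.7]{Ortu_OSCmoduli}, so the comparison is with the mechanism behind that result and with the way the analogous cscK statement is handled in \S\ref{Sec:Sz_def_theory}. Your pointwise discussion (Hermitian, positive, type $(1,1)$ on the image of the injective, $\bb{C}$-linear $\dd_y\Phi$) is fine, up to the minor conflation of $J_y=\Phi(y)$, an almost complex structure on the fibration $X\to B$ acting on $\widetilde{H}^1_{\m{V}}\subset\Omega^{0,1}(\m{V}^{1,0})$, with ``the complex structure of $V_\pi^+$ at $y$''; these agree on $T_yV_\pi^+$ only via the holomorphicity of $\Phi$, which is exactly the identification you need to make explicit.

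The genuine gap is the closedness argument, which is the entire content of the proposition. You propose to realise $\Omega$ as a fibre integral $\int_{\m{U}/V_\pi^+}\Psi$ of an unspecified closed polynomial in $\omega_{\m{U}},\omega_B,\rho_{\m{U}}$ and you yourself flag the identification of that fibre integral with \eqref{Eq:WP_metric} as ``the main obstacle'', offering only an unperformed second-variation computation. Two problems: first, such a $\Psi$ is never constructed, so nothing is proved; second, the Fujiki--Schumacher mechanism you invoke does not transfer as stated. In \cite{FujikiSchumacher_moduli_cscK} the equality of the fibre-integral Weil--Petersson form with the $L^2$-pairing of (harmonic) Kodaira--Spencer representatives relies on the fibres of the family being cscK, whereas the fibres of $\m{U}\to V_\pi^+$ are the total spaces $(M,\omega,\Phi(y))\to B$, which are only relatively K\"ahler (and not even relatively cscK away from the degeneration locus); moreover the pairing \eqref{Eq:WP_metric} uses the mixed contraction $\Lambda_{\omega_F+\omega_B}\mrm{Tr}_{\omega_F}$ on vertical-valued $(0,1)$-forms, which is not the pairing that arises from such fibre integrals. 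The robust route, consistent with the cited source and with the cscK discussion before Theorem \ref{Thm:DervanHallam}, is to note that \eqref{Eq:WP_metric} is the pullback along the holomorphic Kuranishi map $\Phi$ of the $L^2$ Hermitian structure on $\scr{J}_\pi$, whose fundamental form is closed because all the contraction data ($\omega_F$, $\omega_B$, the fixed volume form $\omega^m\wedge\omega_B^n$) are independent of the point of $\scr{J}_\pi$ -- pointwise one integrates the canonical K\"ahler forms of the finite-dimensional spaces of compatible complex structures -- so closedness survives pullback, and positivity follows from injectivity of $\dd_y\Phi$. As written, your argument leaves precisely this step unestablished.
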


From Lemma \ref{Lemma:openness_setting}, to any point $y \in V_\pi^+$ we can associate a pair $(x_y, v_y) \in TV_\pi$ where $\Phi(x_y)$ is relatively cscK.
We can then write the optimal symplectic connection equation \eqref{Eq:OSC} as
\begin{equation}\label{Eq:OSCfindim}
p_{E(x_y)}\left( \Theta(\omega, \Phi(x_y))\right) + \frac{1}{2}\nu_{x_y}(v_y)=0.
\end{equation}

\begin{lemma}
The group $K_{\pi,v}$ acts on $V_\pi^+$ by pull-back and the action preserves $V_\pi^+$.
\end{lemma}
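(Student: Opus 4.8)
The plan is to derive the statement from the $K_\pi$-equivariance of the relative Kuranishi map recorded in Theorem \ref{Thm:relative_Kuranishi}, together with the fact that $K_{\pi,v}$ is a subgroup of $K_\pi$. First I would recall that $K_\pi$ acts on $V_\pi$ by pull-back: this is implicit in the $K_\pi$-equivariance of $\Phi$, and concretely $K_\pi$ acts linearly on $\widetilde{H}^1_{\m{V}}$ by pull-back of $(0,1)$-forms with values in $\m{V}^{1,0}$, and since each $f\in K_\pi$ satisfies $f^*\omega=\omega$ and $\pi_X\circ f=\pi_X$ it preserves the fibrewise Hermitian metric $\langle\cdot,\cdot\rangle_{\omega_F+\omega_B}$ of \eqref{Eq:WP_metric}, hence acts unitarily and preserves the ball $V_\pi$, with $\Phi(f^*y)=f^*\Phi(y)$. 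In particular $K_{\pi,v}$ acts on $V_\pi$ by pull-back, so it remains only to prove that the subvariety $V_\pi^+$ is invariant.

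For the invariance I would fix $y'\in V_\pi^+$ and $f\in K_{\pi,v}$ and verify the two conditions of Lemma \ref{Lemma:openness_setting} for $f^*y'$. By that lemma there is a point $x_{y'}\in V_\pi$ with $\Phi(x_{y'})$ relatively cscK such that $(M,\omega,\Phi(y'))\to B$ degenerates to $(M,\omega,\Phi(x_{y'}))\to B$ through a family of points of $V_\pi$; applying the pull-back action of $f$ to this family — which stays in $V_\pi$ by its $K_\pi$-invariance — degenerates $(M,\omega,\Phi(f^*y'))\to B$ to $(M,\omega,f^*\Phi(x_{y'}))\to B$, using $\Phi(f^*x_{y'})=f^*\Phi(x_{y'})$. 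Since $f^*\omega=\omega$ and $\pi_X\circ f=\pi_X$, the diffeomorphism $f$ restricts on each fibre to an isometry from $g(\omega,f^*\Phi(x_{y'}))$ onto $g(\omega,\Phi(x_{y'}))$, and as being cscK is an isometry invariant, $(\omega,f^*\Phi(x_{y'}))$ is again relatively cscK, giving condition (1); and $f$ maps each fibre of the degeneration of $f^*y'$ biholomorphically onto the corresponding fibre of the degeneration of $y'$, so the associated automorphism groups are conjugate and hence remain mutually isomorphic, giving condition (2). Therefore $f^*y'$ lies in $V_\pi^+$, the locus of points satisfying these conditions.

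Most of this is bookkeeping; the point needing care is the second step — checking that the pull-back under $f$ of a degeneration family lying in $V_\pi$ is again an admissible degeneration family lying in $V_\pi$ (which uses the $K_\pi$-invariance of the Kuranishi neighbourhood $V_\pi$ and the equivariance of $\Phi$), and that its central fibration stays relatively cscK, which relies crucially on $f^*\omega=\omega$ so that $f$ is a genuine fibrewise isometry rather than merely a biholomorphism. Note that only $f\in K_\pi$ is used, so this in fact shows that all of $K_\pi$ preserves $V_\pi^+$; we state the lemma for $K_{\pi,v}$ since that is the group with which the moment map will later be constructed.
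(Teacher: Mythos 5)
Your proof is correct and follows essentially the same route as the paper: pull back the degeneration family using the $K_\pi$-equivariance of the Kuranishi map and observe that, since $f$ preserves $\omega$ and the projection, the central fibration $f^*\Phi(x_y)$ remains relatively cscK, so $f^*y\in V_\pi^+$. You simply spell out more detail than the paper does (unitarity of the action on $\widetilde{H}^1_{\m{V}}$, and the automorphism-group condition of Lemma \ref{Lemma:openness_setting}, which the paper leaves implicit), which is fine.
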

\begin{proof}
The claim follows from the fact that the group $K_{\pi, v}$ is a group of biholomorphisms of the complex structure $I$. Consider a point $y \in V_\pi^+$ and a function $f \in K_{\pi,v}$. Then the relatively cscK degeneration $x_y$ is preserved by $f$, i.e.\ $f^*\Phi(x_y)$ is relatively cscK. Therefore the fibration associated with $f^*y$ degenerates to the relatively cscK fibration associated with $f^*x_y$, so $f^*y$ is in $V_\pi^+$.
\end{proof}

For $\xi\in \mf{k}_{\pi,v}$, let $\sigma_\xi$ be the corresponding fibrewise holomorphic vector field and $h_\xi$ be its fibrewise holomorphy potential with respect to the relative K\"ahler metric $(\omega, I)$. If we perturb $\omega$ to $\omega + i\del\delbar\varphi$, for some K\"ahler potential $\varphi \in \m{K}_E(I)$, then \cite[Lemma 4,10]{Szekelyhidi_book} the function
\[
h_{\xi,\varphi} =h_\xi + \sigma_\xi(\varphi)
\]
is a fibrewise holomorphy potential for the same fibrewise holomorphic vector field with respect to the relative metric $(\omega + i \del\delbar \varphi, I)$.
The following result, analogous to the second part of the Kuranishi Theorem \ref{Thm:Kuranishi}, allows us to perturbs the relatively K\"ahler metric in such a way that the optimal symplectic operator has values in the Lie algebra of $K_{\pi, v}$.

\begin{lemma}\label{Lemma:perturbation_Kuranishi_map}
Assume that $Y$ admits an optimal symplectic connection. Then for each $y \in V_\pi^+$ there exists a K\"ahler potential $\varphi$ for the complex structure $\Phi(x_y)$ and $\xi \in \mf{k}_{\pi,v}$, also depending on $y$, such that
\[
p_{E(x_y)}\left( \Theta(\omega + i\del\delbar\varphi, \Phi(x_y))\right) + \frac{1}{2}\nu_{x_y}(\dd_y\Phi(v_y)) = h_{\xi,\varphi}.
\]
\end{lemma}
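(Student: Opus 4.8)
The plan is to mimic the proof of the second part of Theorem~\ref{Thm:Kuranishi} (and of the Dervan--Hallam perturbation in the cscK case), but now for the optimal symplectic connection operator, using the fact that its linearisation $\widehat{\m{L}} = \m{R}^*\m{R} + \m{A}^*\m{A}$ has kernel exactly $\mf{k}_{\pi,v}$ (realised as fibrewise $I$-holomorphy potentials which are global $J_s$-holomorphy potentials for all $s$). First I would set up, for each $y \in V_\pi^+$, the map
\[
G_y : \mf{k}_{\pi,v} \times C^\infty_{E}(X) \longrightarrow C^\infty(E(x_y)),
\qquad
(\xi, \varphi) \longmapsto p_{E(x_y)}\!\left( \Theta(\omega + i\del\delbar\varphi, \Phi(x_y))\right) + \tfrac{1}{2}\nu_{x_y}(\dd_y\Phi(v_y)) - h_{\xi,\varphi},
\]
where $h_{\xi,\varphi} = h_\xi + \sigma_\xi(\varphi)$, defined on suitable weighted Sobolev completions. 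At $y = 0$ the hypothesis that $Y$ admits an optimal symplectic connection is used: after pulling back by an element of $G_{\pi,v}$ if necessary, we may assume $(\omega, I)$ itself solves \eqref{Eq:OSC}, so $G_0(0,0) = 0$. The statement then follows from the implicit function theorem applied to $G_y$ at $(0,0)$, provided the linearisation in the $(\xi,\varphi)$-variables is uniformly invertible for $y$ in a neighbourhood of the origin.

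The key step is therefore the analysis of the linearisation. The partial derivative of $G_y$ with respect to $\varphi$ at $(\xi, 0)$ is $\widehat{\m{L}}_{x_y}$ (the linearisation of the OSC operator at the complex structure $\Phi(x_y)$) plus lower-order terms coming from the $\Theta$-term and the term $\sigma_\xi(\varphi)$, while the derivative with respect to $\xi$ recovers $-h_\xi$. As in Theorem~\ref{Thm:Kuranishi}, one forms the ``model'' operator
\[
L_y(\varphi) = \widehat{\m{L}}_{x_y}(\varphi) - h_\xi - \sigma_\xi(\varphi),
\]
on the pair $(\xi,\varphi)$ with $\xi$ the $L^2(\omega_F + \omega_B)$-orthogonal projection of the input onto $\mf{k}_{\pi,v}$, and one checks: (i) surjectivity, because $\ker \widehat{\m{L}}_{x_y} \subseteq \ker \widehat{\m{L}}_0 = \mf{k}_{\pi,v}$ (by continuity of $\m{R}$ and $\m{A}$ in $y$, exactly as $\ker \m{D}^*_x\m{D}_x \subseteq \ker \m{D}^*_0\m{D}_0$ in the cscK argument, together with the fact that the kernel of $\m{A}$ detects being a $J_s$-holomorphy potential), and (ii) a uniform bound $\norm{Q_y} < C$ on the right inverse, obtained from a Poincar\'e-type inequality for $\m{R}$ and $\m{A}$ with constant independent of $y$ (since $\widehat{\m{L}}_0$ is elliptic with kernel $\mf{k}_{\pi,v}$, one uses the spectral gap of $\widehat{\m{L}}_0$ on $\mf{k}_{\pi,v}^\perp$ and the uniform equivalence of the relevant metrics for $y$ near the origin) plus elliptic (Schauder) estimates for the second-order elliptic operator \eqref{Eq:OSC} on $E \to B$. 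Since the difference between $L_y$ and the true linearisation $\dd G_y|_{(\xi,0)}$ is $O(\card{y})$ (the extra $\Theta$-contribution vanishes at $y$ such that $\Phi(x_y)$ is close to $I$, and $\nu_{x_y}$ enters only at second order in $v_y$), the same estimate holds for the true linearisation after shrinking $V_\pi^+$, and the implicit function theorem produces $(\xi, \varphi)$ depending smoothly on $y$ with $G_y(\xi,\varphi) = 0$.

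The main obstacle I anticipate is (i): verifying that the kernel of the linearised OSC operator cannot jump \emph{up} as $y$ varies, i.e.\ $\ker \widehat{\m{L}}_{x_y} \subseteq \ker \widehat{\m{L}}_0$. In the cscK case this is the elementary inclusion $\ker \m{D}^*_x\m{D}_x \subseteq \ker \m{D}^*_0\m{D}_0 = \mf{k}$, which follows because a $\Phi(x)$-holomorphic vector field with a zero is, a fortiori, a candidate in the finite-dimensional space $\widetilde{H}^1$ and one checks it lies in the kernel at $0$; here one needs the analogous statement for \emph{fibrewise} holomorphy potentials that are simultaneously global holomorphy potentials for the whole family $\{J_s\}$, using the relative Kuranishi theory (Theorem~\ref{Thm:relative_Kuranishi}) and the description of $\ker \m{R}$ and $\ker \m{A}$. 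A secondary subtlety is that here the K\"ahler potential $\varphi$ must be constrained to keep $\Phi(x_y)$ relatively cscK (i.e.\ $\varphi \in \m{K}_E$ in an appropriate sense), so that the bundle $E(x_y)$ and the projection $p_{E(x_y)}$ vary smoothly; this is exactly what the third part of Theorem~\ref{Thm:relative_Kuranishi} provides, and one should apply that result first to fix a reference relatively cscK metric $\omega_{x_y}$ before running the implicit function theorem in the remaining directions.
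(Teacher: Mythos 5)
Your proposal follows essentially the same route as the paper: the same operator $(\varphi,\xi)\mapsto p_{E}(\Theta(\omega+i\del\delbar\varphi,\Phi(x_y)))+\tfrac{\lambda}{2}\nu - h_{\xi,\varphi}$ on $\m{K}_E^{2,p+2}\times\mf{k}_{\pi,v}$, surjectivity of the linearisation from $\ker\widehat{\m{L}}_{x_y}\subseteq\mf{k}_{\pi,v}$, a $y$-uniform bound on the right inverse via Poincar\'e-type inequalities and Schauder estimates, and the implicit function theorem. The only cosmetic difference is that the paper reduces the uniform-inverse estimate to the $\m{R}_y^*\m{R}_y$ part alone, observing that the $\m{A}_y^*\m{A}_y$ and $\xi$-terms are zeroth order as operators on the finite-rank bundle $E(x_y)\to B$, whereas you propose spectral-gap control of both pieces; either works.
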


\begin{proof}
For each $y \in V_\pi^+$ let $g_y$ be the Riemannian metric induced by the volume form $\omega^m \wedge \omega_B^n$ with respect to the complex structure $\Phi(x_y)$.
Consider the operator
\begin{equation*}\label{Eq:OSC_operator}
\begin{aligned}
\m{F}_y : \m{K}_E^{2,p+2}(g_y) \times \mf{k}_{\pi,v} &\to W^{2, p}(g_y) \\
(\varphi, \xi) &\mapsto p_{E(\varphi, x_y)} \left( \Theta(\omega + i \del\delbar\varphi, \Phi(x_y))\right) + \frac{\lambda}{2}\nu_{\varphi, x_y}\left(v_y\right)-h_{\xi, \varphi}.
\end{aligned}
\end{equation*}
In this expression, $E(\varphi, x_y)$ is the vector bundle of fibre holomorphy potentials with respect to the K\"ahler structure $(\omega +i\del\delbar\varphi, \Phi(x_y))$ and $\m{K}_E^{2,p+2}(g_y)$ is the $W^{2,p+2}(g_y)$-completion of $\m{K}_E(\Phi(x_y))$.
As recalled in \S \ref{Subsec:OSC}, the differential of $\m{F}_y$ at $(\xi,0)$ with respect to $\varphi$ is \cite[\S 5.3]{Ortu_OSCdeformations}
\[
L_y(\psi) = -\frac{1}{2}\m{R}_y^*\m{R}_y(\psi) -\m{A}_y^*\m{A}_y(\psi)-h_{\xi}-\sigma_\xi(\psi),
\]
where $\m{R}_y$ and $\m{A}_y$ are the operators \eqref{Eq:operator_R} and \eqref{Eq:operator_mA}, computed with respect to the complex structure $\Phi(x_y)$.
As an operator on the vector bundle $E(x_y) \to B$ of fibrewise $\Phi(x_y)$-holomorphy potentials, the kernel of $L_y$ is contained in $\mf{k}_{\pi,v}$, so it is surjective.

We next prove that $L_y$ has a bounded right inverse.
As a differential operator on the vector bundle $E(x_y)$, the term $-\m{A}_y^*\m{A}_y(\psi)-h_{\xi}-\sigma_\xi(\psi)$ is linear \cite[\S 5.3]{Ortu_OSCdeformations}, so $L_y$ has a bounded right inverse if and only if $\m{R}_y^*\m{R}_y$ does.
As in the proof of Theorem \ref{Thm:Kuranishi}, the bound follows from a Poincar\'e type inequality and Schauder estimates.
Recall that by definition
\[
\m{R}_y(\psi) = \delbar_B \nabla^{1,0}_{\m{V},y}\psi.
\]
The vertical-horizontal splitting of $TX$ induced by $\omega$ gives a splitting of the operator
\[
\dd = \dd_{\m{H}} + \dd_{\m{V}},
\]
and analogously for the holomorphic operators $\del_0$, $\delbar_0$ with respect to the complex structure $\Phi(0)$.
These splittings define two Laplace operators on $X$ \cite[\S 2]{LuSeyyedali_extremal}:
\begin{align*}
\Delta_{H,0} (\psi) &= \delbar_B^*\delbar_B\psi,\\
\Delta_{\m{V},0}(\psi) &= \delbar_{\m{V}}^* \delbar_{\m{V}}\psi,
\end{align*}
where the adjoint is defined with respect to $g_0$, and similarly the Laplace operator $\Delta_{\m{V},y}(\psi) = \delbar_{\m{V},y}^* \delbar_{\m{V},y}$ where the adjoint is again defined with respect to $g_0$ but the vertical $\delbar$-operator is the one induced by the complex structure $\Phi(x_y)$.
Arguing as in the proof of Theorem \ref{Thm:Kuranishi}, we obtain the Poincar\'e inequality for $\delbar_B$,
\[
\norm{\delbar_B\psi}^2_{L^2(g_y)} \ge C \norm{\psi}^2_{L^2(g_y)}, \quad \psi \in \mf{k}_{\pi,v}^\perp \cap C^\infty(E(x_y)),
\]
where $C$ is independent on $y$, for $y$ sufficiently close to the origin.
It remains to give a Poincar\'e type inequality for $\Delta_{\m{V},y}$.
This again follows as in the proof of Theorem \ref{Thm:Kuranishi}: in fact, on each fibre the operator $\Delta_{\m{V}, y}$ is the Laplacian on the cscK fibre of the fibration corresponding to $\Phi(x_y)$, so the inequality
\[
\norm{\nabla^{1,0}_{\m{V}}(\psi)}_{L^2(g_y)} \ge C \norm{\psi}_{L^2(g_y)}
\]
holds at any given fibre. Hence it holds in a neighbourhood of a given fibre with the constant independent on the fibre. By compactness of the total space, it holds globally with a uniform constant.
The conclusion then follows applying the Schauder estimates as in Theorem \ref{Thm:Kuranishi}: the right inverse $Q_y$ of $L_y$ satisfies
\[
\norm{Q_y} \le C.
\]

From the implicit function theorem, locally around $(0, y_0)$ we can then solve $\m{F}_y(\varphi_y, \xi) = 0$, and moreover $\varphi_y$ varies smoothly with $y$.
\end{proof}

Next we wish to prove an analogue of Theorem \ref{Thm:DervanHallam} to show that the optimal symplecic connection operator is a moment map.
Let $\m{U} \to B\times V_\pi^+$ be the universal family of fibrations such that $\m{U}_0 = Y \to B$ admits an optimal symplectic connection.
The fibres of $\m{U} \to V_\pi^+$ are diffeomorphic to $M$, where $M$ is the underlying smooth manifold to the total space $Y$.
Let $\omega_{\m{U},k}$ be a relatively K\"ahler metric on $\m{U}$ obtained pulling back the metric $\omega_k$ from $Y$.
Then
\[
\omega_{\m{U}, k} =\omega_{\m{U}}+k\omega_B,
\]
where $\omega_{\m{U}}$ is the pullback of $\omega$ from $Y$.
The group $K_{\pi,v}$ acts on $\m{U}$ so that the family $\m{U}\to B \times V_\pi^+$ is $K_{\pi, v}$-equivariant, where the action is trivial on $B$.

Since the K\"ahler potentials $\varphi_y$ obtained in Lemma \ref{Lemma:perturbation_Kuranishi_map} vary smoothly with $y$, they define a global potential $\varphi$ for $\omega_{\m{U}}$ that makes the optimal symplectic connection operator lie in the Lie algebra $\mf{k}_{\pi,v}^{M}$, i.e.\ for each $y \in V_\pi^+$,
\[
p_{E(x_y)}\left( \Theta(\omega_y, \Phi(x_y))\right) + \frac{1}{2}\nu_{x_y} \in (\mf{k}_{\pi,v}^M)_y.
\]

Let $\tau$ be a moment map on $\m{U}$ for the action of $K_{\pi,v}$ with respect to $\omega_{\m{U}}$.
As a consequence of Theorem \ref{Thm:DervanHallam}, we obtain the following interpretation of the optimal symplectic connection operator as a moment map.
\begin{theorem}\label{Thm:osc_momentmap}
The map $\theta$ defined as
\begin{equation}\label{Eq:osc_momentmap}
\langle \theta(y), v \rangle = \int_{\m{U}_y} \langle \tau, v \rangle\vert_y \left( p_{E(x_y)}\left( \Theta(\omega_y, \Phi(x_y))\right) + \frac{1}{2}\nu_{x_y}\right) \omega_y^m\wedge \omega_B^n
\end{equation}
is a moment map on $V_\pi^+$ with respect of the Weil--Petersson metric $\Omega $.
\end{theorem}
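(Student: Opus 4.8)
The plan is to deduce the statement from the Dervan--Hallam moment map identity (Theorem~\ref{Thm:DervanHallam}) applied to the universal family $\m{U}\to V_\pi^+$ equipped with the relatively K\"ahler metric $\omega_{\m{U},k}=\omega_{\m{U}}+k\omega_B$, and then to extract the optimal symplectic connection operator as the first nontrivial coefficient in the adiabatic expansion in $k$. One works throughout with the global relative K\"ahler potential $\varphi$ of Lemma~\ref{Lemma:perturbation_Kuranishi_map}, so that along $V_\pi^+$ the operator $p_{E(x_y)}(\Theta(\omega_y,\Phi(x_y)))+\tfrac12\nu_{x_y}$ takes values in $(\mf{k}_{\pi,v}^M)_y$, and one normalises $\tau$ so that $\langle\tau,\xi\rangle$ is fibrewise of mean value zero; since $K_{\pi,v}$ acts trivially on $B$, the same $\tau$ (up to a constant) serves as a moment map for $\omega_{\m{U},k}$ for every $k$. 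Then, for fixed $k\gg 0$, because the Dervan--Hallam formula uses only the family structure, a relatively K\"ahler metric and a fibrewise moment map, Theorem~\ref{Thm:DervanHallam} gives that
\[
\langle\mu_k(y),\xi\rangle=\int_{\m{U}_y}\langle\tau,\xi\rangle\big|_y\,\big(\Scal(\omega_{\m{U},k},\Phi(y))\big|_{\m{U}_y}-\widehat{S}_k\big)\,\omega_{\m{U},k}^{\,n+m}
\]
is a moment map for the $K_{\pi,v}$-action on $V_\pi^+$ with respect to the Weil--Petersson form $\Omega_{WP,k}$ of $\omega_{\m{U},k}$, where $\widehat{S}_k$ is the (topological) average scalar curvature.

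The next step is to expand $\mu_k$ in powers of $k$. By Proposition~\ref{Prop:expansion_scalar_curvature}, with $s^2=\lambda k^{-1}$ and using that the fibrewise cscK constant $\widehat{S}_b$ is a ratio of intersection numbers, hence locally constant and therefore constant on the connected base $B$, one has $\Scal(\omega_{\m{U},k},\Phi(y))|_{\m{U}_y}-\widehat{S}_k=k^{-1}\big(p_{E(x_y)}(\Theta(\omega_y,\Phi(x_y)))+\tfrac{\lambda}{2}\nu_{x_y}+\psi_B(y)+\psi_R(y)\big)+\Ok{-3/2}$, with $\psi_B(y)\in C^\infty(B)$ and $\psi_R(y)\in C^\infty(R)$. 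Combining this with the volume expansion \eqref{eq:expansions} and integrating over $\m{U}_y$, the contributions of $\psi_B(y)$, of $\psi_R(y)$ and of the correction $\widehat{S}_k-\widehat S$ all vanish, since $\langle\tau,\xi\rangle|_{\m{U}_y}$ lies in $C^\infty(E(x_y))$ and is therefore fibrewise $L^2$-orthogonal to $C^\infty(R)$ and fibrewise of mean value zero. Hence the leading nontrivial coefficient of $\langle\mu_k(y),\xi\rangle$ is, up to a positive combinatorial constant, equal to $\langle\vartheta(y),\xi\rangle$ from \eqref{Eq:osc_momentmap}; here, exactly as in the proof of Proposition~\ref{prop:projection_moment_map_cscK}, one also uses that the perturbation $\varphi$ of Lemma~\ref{Lemma:perturbation_Kuranishi_map} makes the $L^2$-orthogonal projection onto $\mf{k}_{\pi,v}^{\m{U}}|_{\m{U}_y}$ implicit in the Dervan--Hallam formula agree, to leading order in $k$, with pairing the algebraic projection $p_{E(x_y)}$ against $\tau$.

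To conclude, one expands the Weil--Petersson form: from the definition of $\Omega_{WP,k}$, the volume expansion \eqref{eq:expansions}, and the Fujiki--Schumacher identification of the Weil--Petersson form with the metric induced by the fibrewise $L^2$-inner product, one gets $\Omega_{WP,k}=c\,k^{n}\,\Omega+\Ok{n-1}$ for a positive constant $c$, with $\Omega$ the form \eqref{Eq:WP_metric} on $V_\pi^+$. Since the moment map identity $\dd_y\langle\mu_k,\xi\rangle=\Omega_{WP,k}(\cdot,\sigma_\xi^{V_\pi^+})$ holds for every $k$, matching the coefficients of the leading power of $k$ on the two sides yields $\dd_y\langle\vartheta,\xi\rangle=(\text{positive const})\cdot\Omega(\cdot,\sigma_\xi^{V_\pi^+})$, which after the harmless rescaling of $\vartheta$ by this constant is precisely the moment map equation; the required $K_{\pi,v}$-equivariance of $\vartheta$ is immediate from the equivariance of $\tau$, of $p_E$ and $\Theta(\cdot,I)$ (Theorem~\ref{Thm:autom_OSC}) and of $\nu$, together with the fact that $K_{\pi,v}$ preserves $V_\pi^+$ and the splitting \eqref{Eq:splitting_function_space}.

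The main obstacle will be the bookkeeping in the last two steps: one must track the powers of $k$ and the combinatorial and analytic constants so that the expansions of $\mu_k$ and of $\Omega_{WP,k}$ are homogeneous of the same degree in $k$ and their leading coefficients reproduce exactly $\vartheta$ and $\Omega$, and, as in Proposition~\ref{prop:projection_moment_map_cscK}, one must check that the potential perturbation of Lemma~\ref{Lemma:perturbation_Kuranishi_map} makes the orthogonal projections hidden in the Dervan--Hallam formula compatible with the algebraic projection $p_E$ in \eqref{Eq:OSC}. A secondary point is to justify the applicability of the Dervan--Hallam identity to the relative Kuranishi family $\m{U}\to V_\pi^+$ rather than to the Kuranishi family of a genuinely cscK manifold; this is fine because that identity relies only on the family structure, a relatively K\"ahler metric and a fibrewise moment map, all of which are available here.
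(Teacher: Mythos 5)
Your proposal is correct and follows essentially the same route as the paper: apply the Dervan--Hallam identity (Theorem \ref{Thm:DervanHallam}) to the universal family with the adiabatic metric $\omega_{\m{U},k}$, observe that the base contribution to the fibrewise moment map drops because the deformations (and the $K_{\pi,v}$-action) are vertical, and then extract the leading nontrivial coefficient using the volume expansion \eqref{eq:expansions} and Proposition \ref{Prop:expansion_scalar_curvature}, which is exactly \eqref{Eq:osc_momentmap}. The paper's proof is in fact terser than yours (it does not spell out the vanishing of the $\psi_B$, $\psi_R$ contributions, the expansion of the Weil--Petersson form, or equivariance), so your additional bookkeeping is consistent with, and a refinement of, the argument given there.
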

\begin{proof}
We write an expansion of the expression \eqref{Eq:Moment_map_DH}, in powers of k.
First observe that a moment map on $\m{U}$ with respect to the relatively K\"ahler metric $\omega_{\m{U},k}$ is given by
\[
\tau_k = \tau + k\tau_B
\]
In fact, this follows from expanding in $k$ the expression
\[
\dd \langle \tau_k, v \rangle = \omega_{\m{U}, k}(\cdot, v).
\]
From Theorem \ref{Thm:DervanHallam}, we obtain a moment map $\mu_k$ defined as
\[
\langle \mu_k(y), v \rangle = \int_{\m{U}_y} \langle\tau_k, v \rangle\vert_y \left(S(\omega_{\m{U},k}\vert_y, \Phi(x)) - \widehat{S}_k\right) \omega_k^{n+m}.
\]
Since $v$ is a vertical deformation, $\omega_B (\cdot, v) =0$, so we are left with the expansion of
\[
\langle \mu_k(y), v \rangle = \int_{\m{U}_y} \langle\tau, v \rangle\vert_y \left(S(\omega_{\m{U},k}\vert_y, \Phi(x)) - \widehat{S}_k\right) \omega_k^{n+m}.
\]
Using the expansion of the volume \eqref{eq:expansions} and Proposition \ref{Prop:expansion_scalar_curvature}, we obtain that the leading order term is precisely the expression \eqref{Eq:osc_momentmap}.
\end{proof}

We finish this section by observing that zeroes of the moment map $\theta$ correspond to optimal symplectic connections.
In fact, Theorem \ref{Thm:osc_momentmap} implies $y$ is a zero of the moment map $\theta$ if and only if the optimal symplectic connection operator is orthogonal to the Lie algebra $\mf{k}_{\pi,v}$.
Arguing as in Proposition \ref{prop:projection_moment_map_cscK}, we obtain the following.
\begin{proposition}
A point $y \in V_\pi^+$ is a zero of the moment map $\theta$ if and only if
\[
p_{E(x_y)}\left( \Theta(\omega_y, \Phi(x_y))\right) + \frac{1}{2}\nu_{x_y} =0.
\]
\end{proposition}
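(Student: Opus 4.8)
The plan is to mirror the argument of Proposition~\ref{prop:projection_moment_map_cscK}, with the scalar curvature replaced by the optimal symplectic connection operator and the group $K$ replaced by $K_{\pi,v}$. First I would fix the global relative K\"ahler potential $\varphi$ on $\m{U}$ provided by Lemma~\ref{Lemma:perturbation_Kuranishi_map}, so that for every $y \in V_\pi^+$ the optimal symplectic connection operator
\[
\m{O}(y) := p_{E(x_y)}\left( \Theta(\omega_y, \Phi(x_y))\right) + \frac{1}{2}\nu_{x_y}
\]
lies in the space $(\mf{k}_{\pi,v}^M)_y$ of fibrewise $\Phi(x_y)$-holomorphy potentials on $M$. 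By Theorem~\ref{Thm:osc_momentmap} the map $\vartheta$ is then a moment map for the $K_{\pi,v}$-action with respect to $\Omega$, and from formula~\eqref{Eq:osc_momentmap} the condition $\vartheta(y) = 0$ is equivalent to $\m{O}(y)$ being $L^2$-orthogonal, with respect to the volume $\omega_y^m \wedge \omega_B^n$, to the subspace $\mf{k}_{\pi,v}^{\m{U}}|_{\m{U}_y} \subset C^\infty(M)$ of restrictions to $\m{U}_y$ of the fibrewise holomorphy potentials $\langle\tau, \cdot\rangle$ on $\m{U}$. Denote by $\Pi_{1,y}$ the corresponding orthogonal projection onto that subspace.

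The core of the proof is then the same linear-algebra observation as in Proposition~\ref{prop:projection_moment_map_cscK}. For each $y$ one introduces the two injective linear maps $\iota_{1,y}, \iota_{2,y} : \mf{k}_{\pi,v} \to C^\infty(M)$ sending $\xi$ to the restriction to $\m{U}_y$ of its $\m{U}$-holomorphy potential, and to its fibrewise $\Phi(x_y)$-holomorphy potential, respectively; their images are $\mf{k}_{\pi,v}^{\m{U}}|_{\m{U}_y}$ and $(\mf{k}_{\pi,v}^M)_y$, and they are related by $\iota_{1,y}(\xi) = \iota_{2,y}(\xi) + \sigma_\xi(\varphi)|_{\m{U}_y}$, where $\sigma_\xi$ is the vector field induced by $\xi$ on $V_\pi^+$. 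Since $\m{O}(y) \in \iota_{2,y}(\mf{k}_{\pi,v})$ by Lemma~\ref{Lemma:perturbation_Kuranishi_map}, it suffices to prove that the restriction of $\Pi_{1,y}$ to $\iota_{2,y}(\mf{k}_{\pi,v})$ is an isomorphism: then the vanishing of $\Pi_{1,y}(\m{O}(y))$ forces $\m{O}(y) = 0$, which is exactly equation~\eqref{Eq:OSCfindim}, and the converse implication is immediate.

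To establish that $\Pi_{1,y}|_{\iota_{2,y}(\mf{k}_{\pi,v})}$ is an isomorphism, I would choose a basis $\{\xi_1, \dots, \xi_d\}$ of $\mf{k}_{\pi,v}$ whose fibrewise holomorphy potentials $\{h_1, \dots, h_d\}$ on $M$ are orthonormal with respect to $\omega^m \wedge \omega_B^n$, and compute the Gram matrix of $\Pi_{1,y}$ in this basis. Because the origin of $V_\pi^+$ is a fixed point of the $K_{\pi,v}$-action, the vector fields $\sigma_{\xi_i}$ vanish at $y = 0$, the relative potential $\varphi$ is $O(|y|)$, and $\omega_y^m \wedge \omega_B^n$ differs from $\omega^m \wedge \omega_B^n$ by $O(|y|)$; hence this Gram matrix equals the identity up to an error of order $O(|y|)$ and is invertible for $y$ in a small enough neighbourhood of the origin. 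I expect the technical heart of the argument to be precisely this perturbation-of-the-identity estimate, and the only point requiring genuine care — as in Proposition~\ref{prop:projection_moment_map_cscK} — is checking that the $O(|y|)$ bounds are \emph{uniform} in $y$, which follows from the smooth dependence on $y$ of $\varphi$, of $x_y$, and of $\Phi(x_y)$ guaranteed by Lemma~\ref{Lemma:perturbation_Kuranishi_map} and Theorem~\ref{Thm:relative_Kuranishi}.
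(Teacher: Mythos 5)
Your proposal is correct and follows essentially the same route as the paper, which simply asserts the result by ``arguing as in Proposition~\ref{prop:projection_moment_map_cscK}'': you carry out exactly that transplant, using the potential from Lemma~\ref{Lemma:perturbation_Kuranishi_map}, the two embeddings $\iota_{1,y},\iota_{2,y}$ of $\mf{k}_{\pi,v}$, and the perturbation-of-the-identity Gram matrix argument to show the projection is an isomorphism near the origin. Nothing essential is missing.
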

The upshot is that we can re-define $\omega_{\m{U}}$ to take into account the change in the K\"ahler potential $\varphi$ and that the zeroes of the moment map $\theta$ obtained from $\omega_{\m{U}}$ are optimal symplectic connections.

\subsection{Optimal symplectic connections on stable deformations}
In this section, we use the moment map property to prove a finite-dimensional stability result analogue to Theorem \ref{Thm:App}: we prove that a stable deformation of a fibration with an optimal symplectic connection still admits an optimal symplectic connection.

As observed in \S \ref{Sec:Sz_def_theory}, Theorem \ref{Thm:osc_momentmap} only works on the subspace of $V_\pi^+$ that parametrises integrable complex structures.
However, we can extend the K\"ahler metric \eqref{Eq:WP_metric} on $V_\pi^+$ to a K\"ahler metric on the whole $V_\pi$, so the optimal symplectic connection moment map $\theta$ extends in a unique way to $V_\pi$.
We do this because $V_\pi$ is smooth, so we can apply to it the theory of the moment map flow.
We will prove at the end that the zero of the moment map we find is in fact in $V_\pi^+$ and it corresponds to an integrable complex structure.

\begin{theorem}\label{Thm:rel_stability_osc}
Let $Y_w \to B$ be a deformation of $Y\to B$, with $w \in V_\pi^+$. Assume that $Y \to B$ admits an optimal symplectic connection and that $Y_w \to B$ is polystable. Then there is a $w_1 \in V_\pi^+$ in the $K_{\pi,v}^{\bb{C}}$-orbit of $w$ such that $\theta(w_1) = 0$.
\end{theorem}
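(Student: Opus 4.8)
The plan is to run the moment map flow for the optimal symplectic connection moment map $\vartheta$ starting from $w$, mirroring the proof of Theorem~\ref{Thm:App}, and to use the polystability of $Y_w\to B$ to force the limit of the flow to be a zero of $\vartheta$ in the $K_{\pi,v}^{\bb{C}}$-orbit of $w$. Since $\m{U}_0=Y\to B$ admits an optimal symplectic connection, $\vartheta$ vanishes at the point of $V_\pi$ corresponding to $Y$, which we take as the base point $0$, so that Proposition~\ref{prop:stabilityflow} applies after shrinking $V_\pi$ to lie inside the neighbourhood $N(0)$; then the flow starting at $w$ stays in $V_\pi$ and converges. To apply the dichotomy of Proposition~\ref{Prop:not_in_orbit} we first reduce to discrete stabiliser: fix a maximal torus $T$ of the stabiliser of $w$ in $K_{\pi,v}$, let $\mf{k}_{\pi,v,T^\perp}$ be the orthogonal complement of $\mf{t}$ in $\mf{k}_{\pi,v}$ as in \eqref{eq:Lie_algebra_orthogonal} (for the rational inner product), let $K_{\pi,v,T^\perp}$ be the corresponding subgroup, and let $\vartheta_{T^\perp}$ be the projection of $\vartheta$ to a moment map for the $K_{\pi,v,T^\perp}$-action. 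Running the flow for $\vartheta_{T^\perp}$ from $w$, Propositions~\ref{Prop:existence_limit_flow} and~\ref{Prop:not_in_orbit} give a limit $w_\infty\in\overline{K_{\pi,v,T^\perp}^{\bb{C}}\cdot w}$ such that either (a) $\vartheta_{T^\perp}(w_\infty)=0$ and $w_\infty\in K_{\pi,v,T^\perp}^{\bb{C}}\cdot w$, or (b) there exist $\xi\ne 0$ in $\mf{k}_{\pi,v,T^\perp}$ (which, using rationality, we take to generate an algebraic $\bb{C}^*$) and $\widetilde w\in V_\pi$ with $\exp(-it\xi)\cdot w\to\widetilde w$ and $\langle\vartheta(\widetilde w),\xi\rangle=\langle\vartheta_{T^\perp}(\widetilde w),\xi\rangle\ge 0$, equality forcing $\widetilde w=w_\infty$ and $\vartheta_{T^\perp}(w_\infty)=0$.

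The crux is to exclude case (b) with strict inequality. The action of $t\mapsto\exp(-it\xi)$ on $V_\pi$ induces a $\bb{C}^*$-degeneration of the polarised fibration $Y_w\to B$ with central fibre $Y_{\widetilde w}\to B$, hence a fibration degeneration in the sense of Definition~\ref{Def:fibration_stability}; expand its Donaldson--Futaki invariant as in \eqref{Eq:DF_invariant_expansion}. Its leading coefficient $W_0$ is proportional to the fibrewise Futaki invariant $\mrm{Fut}_{(Y_{\widetilde w})_b}(\xi)$, which vanishes: since $\xi\in\mf{k}_{\pi,v}$ preserves the degeneration direction, it acts compatibly on the relatively cscK degeneration of $Y_{\widetilde w}$, and the Futaki invariant, being a deformation invariant, equals $\mrm{Fut}_{X_b}(\xi)=0$ because $X_b$ is cscK. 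With $W_0=0$, the adiabatic expansion together with the moment map interpretation of Theorem~\ref{Thm:osc_momentmap} --- the fibration analogue of Theorem~\ref{Thm:classical_Futaki_equal_DF} --- gives $W_1=-c\,\langle\vartheta(\widetilde w),\xi\rangle$ with $c>0$. Hence $\langle\vartheta(\widetilde w),\xi\rangle>0$ would give $W_0=0$ and $W_1<0$, contradicting semistability of $Y_w\to B$. Therefore $\langle\vartheta(\widetilde w),\xi\rangle=0$, so by Proposition~\ref{Prop:not_in_orbit} we have $\widetilde w=w_\infty$ and $\vartheta_{T^\perp}(w_\infty)=0$, and the associated fibration degeneration is non-trivial with $W_0=W_1=0$.

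Polystability of $Y_w\to B$ then forces this degeneration to normalise to a product fibration degeneration over an open $U\subseteq B$ whose complement has codimension at least $2$. Since $Y_{w_\infty}$ is smooth (a point of $V_\pi$), it is isomorphic to $Y_w$ over $U$, hence, by a Hartogs-type extension on the total spaces, over all of $B$; as $w_\infty$ lies in the closure of $K_{\pi,v}^{\bb{C}}\cdot w$ and the corresponding fibration is isomorphic to $Y_w$, the separatedness of orbits in the Kuranishi slice (cf.\ Theorem~\ref{Thm:relative_Kuranishi}) puts $w_\infty$ in $K_{\pi,v}^{\bb{C}}\cdot w$. In case (a) this is part of the statement. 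It remains to pass from $\vartheta_{T^\perp}(w_\infty)=0$ to $\vartheta(w_\infty)=0$: decompose $\mf{k}_{\pi,v}=\mf{t}\oplus\mf{k}_{\pi,v,T^\perp}$, so that we only need to control the $\mf{t}$-component. The torus $T$ fixes $w$, hence $T^{\bb{C}}$ does by analytic continuation; since $K_{\pi,v,T^\perp}^{\bb{C}}$ commutes with $T$ (cf.\ \cite[\S5.5]{Szekelyhidi_book}), $T$ fixes $w_\infty$ as well, so each rational $t\in\mf{t}$ is a fibrewise holomorphic vector field on $Y_{w_\infty}\cong Y_w$ and generates a product fibration degeneration with $W_0=0$ by the same Futaki vanishing. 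Applying semistability to $\pm t$ gives $W_1=0$, whence $\langle\vartheta(w_\infty),t\rangle=0$. Thus $\vartheta(w_\infty)=0$; setting $w_1:=w_\infty$ gives a zero of $\vartheta$ in the $K_{\pi,v}^{\bb{C}}$-orbit of $w$, and $w_1\in V_\pi^+$ because $\Phi(w_1)$ is isomorphic to the integrable, relatively K-semistable fibration $\Phi(w)=Y_w$.

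I expect the main obstacle to be the second paragraph: identifying the limiting pairing $\langle\vartheta(\widetilde w),\xi\rangle$ with the subleading Donaldson--Futaki coefficient $W_1$ of the induced fibration degeneration, and checking that the leading coefficient $W_0$ vanishes --- these are exactly what let the fibration stability hypothesis constrain the sign of the relevant numerical invariant, and are the fibration counterpart of the classical fact recorded in Theorem~\ref{Thm:classical_Futaki_equal_DF}. The remaining ingredients are the moment map flow machinery of \S\ref{subsec:momentmapflow} --- in particular the stability of the flow, Proposition~\ref{prop:stabilityflow} --- here adapted to a non-discrete stabiliser via the torus reduction.
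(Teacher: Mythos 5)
Your proposal is correct and follows essentially the same route as the paper: run the moment map flow for $\vartheta$ starting at $w$ (using Proposition~\ref{prop:stabilityflow} to keep it inside $V_\pi$), apply the dichotomy of Proposition~\ref{Prop:not_in_orbit}, identify $\langle\vartheta(\widetilde w),\xi\rangle$ with a negative multiple of the subleading coefficient $W_1$ of the induced fibration degeneration via the adiabatic expansion and Theorem~\ref{Thm:osc_momentmap}, and let polystability force this pairing to vanish and the limit to correspond to a fibration isomorphic to $Y_w\to B$, hence lying in the $K_{\pi,v}^{\bb{C}}$-orbit of $w$. The only deviations are minor: you perform the $T^\perp$-reduction uniformly from the outset and recover the residual $\mf{t}$-component of $\vartheta$ by applying semistability to $\pm t$ for torus directions, whereas the paper treats the discrete-stabiliser case first and handles the general case via Sz\'ekelyhidi's critical-point results for $\norm{\vartheta}^2$ (Theorem 5.25 and Lemma 5.23 of his book), and your explicit verification that $W_0=0$ spells out a point the paper leaves implicit in the expansion.
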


\begin{proof}
Assume that $w$ has discrete stabiliser.
From Proposition \ref{Prop:existence_limit_flow}, the moment map flow of $\theta$ starting at $w$ converges to a point $w_\infty$, which belongs to $V_\pi$ by Proposition \ref{prop:stabilityflow}. 
Assume that $w_\infty \notin K_{\pi,v}^{\bb{C}}\cdot w$. By Proposition \ref{Prop:not_in_orbit} we can find $\xi\ne 0$ in $ \mf{k}_{\pi,v}$ and $\widetilde w \in V_\pi$ such that
\begin{equation}\label{eq:theta_inner_product_positive}
\langle \theta (\widetilde{w}), \xi\rangle\ge 0.
\end{equation}

Let $f$ be the fibrewise holomorphy potential induced by $\xi$ and consider the expansion
\[
\langle \Scal(\omega_k, \widetilde{w})-\widehat{S}, f\rangle_k = \int_{M}( \Scal(\omega_k, \widetilde{w})-\widehat{S})f \omega_k^{n+m}.
\]
Expanding both the scalar curvature and the K\"ahler metric in powers of $k$ by plugging in the expansion of the volume \eqref{eq:expansions} and the expansion of the scalar curvature from Proposition \ref{Prop:expansion_scalar_curvature} we obtain that the leading order term is, up to a constant,
\[
k^n \int_{M} (\widehat{S}_b - \widehat{S})f \omega^m\wedge\omega_B^n
\]
and the sub-leading order term is, up to a constant,
\begin{equation}\label{Eq:expansion_pairing_subleading}
k^{n-1} \int_M \theta(\widetilde{w})f\omega^m\wedge\omega_B^n + k^{n-1}\int_M (\widehat{S}_b - \widehat{S})f \omega^{m+1}\wedge\omega_B^{n-1}.
\end{equation}
We know from Lemma \ref{Thm:DervanHallam} that the scalar curvature is a moment map for the action of $K_{\pi,v}$ (in fact, for a larger group of biholomorphic isometries that contains $K_{\pi,v}$).
It follows from Theoerm \ref{Thm:classical_Futaki_equal_DF} that the pairing
\[
\langle \Scal(\omega_k, \widetilde{w})-\widehat{S}, f\rangle_k
\]
is, up to a constant, equal to the opposite of the Donaldson-Futaki invariant of the test configuration for $Y_w$ generated by the infinitesimal vector field $\sigma_f$ with central fibre $Y_{\widetilde{w}}$.
Since $\xi$ is in $\mf{k}_{\pi, v}$ the induced test configuration is in fact a fibration degeneration.
Comparing the expression \eqref{Eq:expansion_pairing_subleading} with the expansion of the Donaldson-Futaki invariant \eqref{Eq:DF_invariant_expansion} we see that the quantity
\[
\langle \theta(\widetilde{w}), \xi \rangle = \int_M \theta(w_\infty)f \omega^m \wedge \omega_B^n
\]
is, up to a constant, equal to the opposite of subleading order term $W_1$, which is negative since the fibration $Y_w \to B$ is polystable.
More precisely, we have shown that if $Y_w \to B$ is polystable, then
\[
\langle \theta(\widetilde{w}), \xi \rangle \le 0
\]
with equality holding if and only if $Y_{\tilde w} \to B$ is isomorphic to $Y_w \to B$.
Therefore, the inequality \eqref{eq:theta_inner_product_positive} forces $\langle \theta(\widetilde{w}), \xi \rangle = 0$, and from Proposition \ref{Prop:not_in_orbit} we obtain that $\theta(\widetilde w)=0$.

If $w$ has non-discrete stabiliser, we project orthogonally to a maximal torus in the stabiliser as explained in \S \ref{subsec:momentmapflow}.
Let $\mf{k}_{T^\perp}$ be the Lie algebra \eqref{eq:Lie_algebra_orthogonal} defined from $\mf{k}_{\pi,v}$, and $K_{T^\perp}$ the corresponding group.
Arguing as before, we obtain $\widetilde w \in K_{T^\perp}\cdot w$ such that $\theta_{T^\perp}(\widetilde w)=0$.
From \cite[Theorem 5.25]{Szekelyhidi_book}, it follows that there exists $w_1 \in K_{T^\perp}^{\bb{C}}\cdot \widetilde w$ such that $w_1$ is a critical point for $\norm{\theta}^2$. This implies \cite[Lemma 5.23]{Szekelyhidi_book} that $\theta(w_1) \in \left( \mf{k}_{\pi,v} \right)_{w_1}$.
Since $\widetilde w$ is in the same orbit as $w$, so is $w_1$, so the function $\theta (w_1)$ also belongs to the Lie algebra $ \left(\mf{k}_{\pi,v}\right)_{w}$ of the stabiliser of $w$. Therefore stability implies that
\[
\langle \theta(w_1), \theta(w_1) \rangle = 0,
\]
so $\theta(w_1)=0$. Since $V_\pi^+$ is closed under the action of the group, the point $w_1$ is in fact in $V_\pi^+$.
\end{proof}

\appendix
\section{Stability of deformations of cscK manifolds}\label{appendix}

We use the moment map flow technique to prove a result analogous to Theorem \ref{Thm:rel_stability_osc}, in the case of a K\"ahler manifold $(M, \omega, J)$ with constant scalar curvature: K-polystability of a deformation of $M$ implies the existence a cscK metric.
We provide a proof that uses the Dervan--Hallam theory and the moment map flow, analogous to that of Theorem \ref{Thm:rel_stability_osc}.

\begin{theorem}\label{Th:Sz_stability}
Let $(X,L)$ be a cscK manifold and $(Y, L')$ be a K-polystable deformation of $X$. Then $Y$ admits a cscK metric in $c_1(L')$.
\end{theorem}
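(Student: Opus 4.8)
The plan is to run the same three-step argument used to prove Theorem~\ref{Thm:rel_stability_osc}, now for the cscK equation via the finite-dimensional moment map picture of \S\ref{Sec:Sz_def_theory}. Fix a cscK form $\omega \in c_1(L)$ and complex structure $J$ on $X$; since Chern classes are topological, after a diffeomorphism (and Moser's theorem) we may regard the deformation $(Y, L')$ as the same symplectic manifold $(M,\omega)$ carrying the complex structure $\Phi(w)$ for a point $w$ in the integrable locus of the Kuranishi space, with $c_1(L')$ represented by $\omega$. \emph{Step one:} apply Theorem~\ref{Thm:Kuranishi} to obtain a ball $V \subset \widetilde{H}^1$, the $K$-equivariant holomorphic Kuranishi map $\Phi$ with $\Phi(0) = J$, and for each $x \in V$ a K\"ahler potential $\varphi_x$, smooth in $x$, with $\Scal(\omega + i\del\delbar\varphi_x, \Phi(x)) - \widehat S \in \mf{k}^M_{\varphi_x}$. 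Passing to the universal family $\m{U} \to V$ and replacing $\omega_{\m{U}}$ by $\omega_{\m{U}} + i\del\delbar\varphi$, Theorem~\ref{Thm:DervanHallam} provides a moment map $\mu : V \to \mf{k}^*$ for the $K$-action with respect to the Weil--Petersson form, and Proposition~\ref{prop:projection_moment_map_cscK} says $\mu(x) = 0$ if and only if $\Scal(\omega + i\del\delbar\varphi_x, \Phi(x))$ is constant; in particular $\mu(0) = 0$. As in the proof of Theorem~\ref{Thm:rel_stability_osc}, one first extends $\mu$ to the whole smooth ambient ball, so that the moment map flow becomes available.

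\emph{Step two:} run the moment map flow of $\mu$ with starting point $w$, assuming first that the stabiliser $K_w$ is discrete. Because $\mu(0) = 0$, Proposition~\ref{prop:stabilityflow} ensures that for $w$ close enough to the origin the flow remains inside $V$; Proposition~\ref{Prop:existence_limit_flow} (with $T$ trivial) then produces a limit $w_\infty \in \overline{K^{\bb{C}}\cdot w}$ with $\mu(w_\infty)$ in the Lie algebra of the stabiliser of $w_\infty$, and Proposition~\ref{Prop:not_in_orbit} gives the dichotomy: either $\mu(w_\infty) = 0$ and $w_\infty \in K^{\bb{C}}\cdot w$, or there is a nonzero $\xi \in \mf{k}$ and $\widetilde w = \lim_{t\to\infty}\exp(-it\xi)\cdot w$ with $\langle \mu(\widetilde w), \xi\rangle \ge 0$.

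\emph{Step three,} where K-polystability enters: in the second case, the curve $\exp(-it\xi)\cdot w$ stays in the integrable locus of $V$, which is $K^{\bb{C}}$-invariant, so $\widetilde w$ is integrable and this degeneration defines a test configuration $(\m{Y}, \m{L})$ for $(Y, L')$ with smooth central fibre $\Phi(\widetilde w)$. Since $\Scal$ is the Donaldson--Fujiki moment map for a group of isometries containing $K$, the pairing $\langle \mu(\widetilde w), \xi\rangle$ equals, up to a positive constant, the classical Futaki invariant of the central fibre, which by Theorem~\ref{Thm:classical_Futaki_equal_DF} is a negative multiple of $\mrm{DF}(\m{Y}, \m{L})$. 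K-polystability of $Y$ gives $\mrm{DF}(\m{Y}, \m{L}) \ge 0$, so $\langle \mu(\widetilde w), \xi\rangle \le 0$, with equality only if the test configuration normalises to a product, i.e.\ $\Phi(\widetilde w) \cong Y$; combined with $\langle \mu(\widetilde w), \xi\rangle \ge 0$ this forces $\langle \mu(\widetilde w), \xi\rangle = 0$, and by the final clause of Proposition~\ref{Prop:not_in_orbit} we fall back into the first case, $\mu(w_\infty) = 0$ and $w_\infty \in K^{\bb{C}}\cdot w$. Since $w$ is integrable so is $w_\infty$, and $\Phi(w_\infty) \cong \Phi(w) \cong Y$ by part~(1) of Theorem~\ref{Thm:Kuranishi}; then Proposition~\ref{prop:projection_moment_map_cscK} yields a cscK metric in $c_1(L')$ on $Y$. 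If $K_w$ is not discrete, fix a maximal torus $T \subset K_w$, replace $\mu$ by its projection $\mu_{T^\perp}$ to $\mf{k}_{T^\perp}$ as in \S\ref{subsec:momentmapflow}, and run the argument above for the $K_{T^\perp}$-action, for which $w$ has discrete stabiliser; combining \cite[Theorem 5.25]{Szekelyhidi_book} and \cite[Lemma 5.23]{Szekelyhidi_book} gives $w_1 \in K_{T^\perp}^{\bb{C}}\cdot w \subseteq K^{\bb{C}}\cdot w$ with $\mu(w_1) \in (\mf{k})_{w_1}$, and polystability of $Y$ against the test configuration generated by $\mu(w_1)$ forces $\langle \mu(w_1), \mu(w_1)\rangle = 0$, hence $\mu(w_1) = 0$ and we conclude as before. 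This also repairs the gap in Sz\'ekelyhidi's discussion mentioned in the introduction.

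The step I expect to be the main obstacle is the \emph{stability of the flow} (Proposition~\ref{prop:stabilityflow}): since $V$ is only an open ball in $\widetilde{H}^1$ and is not compact, one must prevent the negative gradient flow of $\|\mu\|^2$ from escaping $V$. The argument realises $V$ as an affine chart of a projective space $\bb{P}^d$, extends the $K$-action, the K\"ahler form and $\mu$ to $\bb{P}^d$ following \cite{DervanMcCarthySektnan}, and then combines the openness of GIT-semistability (Kempf--Ness) with the continuity of the limit map of the flow on $\bb{P}^d$ to trap the limit inside $V$. A secondary subtlety is bookkeeping of integrability: at each stage one must keep the relevant limit points in the complex-analytic locus of $V$ parametrising honest complex structures isomorphic to $Y$, which is precisely why the $K^{\bb{C}}$-invariance of that locus and part~(1) of Theorem~\ref{Thm:Kuranishi} are used.
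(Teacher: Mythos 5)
Your proposal is correct and follows essentially the same route as the paper's proof: the Kuranishi slice together with the Dervan--Hallam moment map, the moment map flow with the stability result of Proposition \ref{prop:stabilityflow}, the identification of $\langle \mu(\widetilde w),\xi\rangle$ with the (Donaldson--)Futaki invariant to bring in K-polystability, and the torus-orthogonal projection when the stabiliser is not discrete. The only slip is cosmetic: equality does not literally put you ``back in the first case'' of Proposition \ref{Prop:not_in_orbit} (in case (2) one has $w_\infty \notin K^{\bb{C}}\cdot w$); what is actually used, as you also note, is that the equality case of polystability makes the degeneration a product test configuration, so the central fibre $\Phi(\widetilde w)=\Phi(w_\infty)$ is isomorphic to $Y$, which is exactly how the paper concludes.
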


\begin{proof}
Let us write $X= (M, \omega, J)$, where $(\omega, J)$ is the cscK metric. Let $V$ be the Kuranishi space and $\Phi$ the Kuranishi map, as in \S\ref{Sec:Sz_def_theory}, so $X$ corresponds to the origin of $V$ and $Y$ corresponds to a point $x \in V$.
In particular, $X$ and $Y$ are diffeomorphic and, since the first Chern classes are integral classes, we can assume that $c_1(L) = c_1(L')$.
Let $\m{U} \to V$ be the universal family parametrising the deformations, let $\omega_{\m{U}}$ the relatively K\"ahler metric on $\m{U}$ obtained after applying Theorem \ref{Thm:Kuranishi}, and let $\m{L}$ be the natural relatively ample line bundle on $\m{U}$.

Recall that from Theorem \ref{Thm:DervanHallam} we have a moment map $\mu$ on $V$ with respect to the action of the group $K$ of Hamiltonian isometries, and from Proposition \ref{prop:projection_moment_map_cscK} the zeroes of $\mu$ are cscK metrics.

Assume that $x$ has discrete stabiliser.
We wish to apply the moment map flow
\[
\frac{\dd}{\dd t} x_t = J\sigma(\mu(x_t))
\]
defined in Definition \ref{def:momentmapflow}, with starting point $x \in V$, and show that there exists a zero of $\mu$ in the $K^{\bb{C}}$-orbit of $x$. %, and show that there exists $x_0 \in K^{\bb{C}}\cdot x$ such that $\mu(x_0)=0$.
From Proposition \ref{prop:stabilityflow} we know that the flow has a limit $x_\infty$ inside $V$.
From Proposition \ref{Prop:not_in_orbit}, either $\mu(x_\infty) = 0$ and $x_\infty \in K^{\bb{C}}\cdot x$, or $x_\infty \notin K^{\bb{C}}\cdot x$.
In the latter case, let $0\ne\xi \in \mf{k}$ and $\widetilde x\in V$ be given by Proposition \ref{Prop:not_in_orbit} such that
\[
\lim_{t \to \infty} \exp(-it\xi) \cdot x = \widetilde x
\]
and
\begin{equation}\label{eq:pairing_positive}
\langle \mu(\widetilde x), \xi \rangle \ge 0.
\end{equation}

The $\Phi(\widetilde x)$-holomorphic vector field induced by $\xi$ defines a test configuration $(\m{Y}, \m{L}')$ for $(Y, L')$, represented on $V$ by the closure of the orbit of $x$ for the action of the one-parameter subgroup of $K^{\bb{C}}$ induced by $\xi$.
From Theorem \ref{Thm:DervanHallam},
\[
\langle \mu(\widetilde x), \xi \rangle = \int_{\m{U}_{\tilde x}} (\Scal (\omega_{\tilde x}, \Phi(\widetilde x))- \widehat S) f \omega_{\tilde x}^n,
\]
where $f$ is the holomorphy potential with respect to $\Phi(\widetilde x)$ induced by $\xi$. In particular $\langle \mu(\widetilde x), \xi \rangle$ is equal to the Futaki invariant of the vector field $\sigma_\xi$ induced by $\xi$.
By Theorem \ref{Thm:classical_Futaki_equal_DF} this is, up to a constant, equal to the opposite of the Donaldson-Futaki invariant of the test configuration $(\m{Y}, \m{L}')$.
K-polystability of $Y$ then implies that
\begin{equation}\label{eq:pairing_negative}
\langle \mu(\widetilde x),\xi \rangle \le 0,
\end{equation}
with equality holding if and only if the test configuration is a product. 
From \eqref{eq:pairing_positive} and \eqref{eq:pairing_negative} we see that $\langle \mu(\widetilde x),\xi \rangle = 0$. Proposition \ref{Prop:not_in_orbit} gives $\widetilde x = x_\infty$ and $\mu(x_\infty) = 0$, and from stability the test configuration induced by $\xi$ is in fact a product test configuration, which in particular implies that $(\m{U}_{x_\infty}, \m{L}_{x_\infty})$ is isomorphic to $(Y, L')$.
Since $K^{\bb{C}}$ is a group of biholomorphisms and $\Phi(x)$ is an integrable complex structure, so is $\Phi(x_\infty)$.
Hence, from Proposition \ref{prop:projection_moment_map_cscK}, the corresponding K\"ahler metric on $(Y, L')$ is cscK.

If the stabiliser of $x$ is not discrete, we can project the moment map orthogonally to the stabiliser, as explained in \S \ref{subsec:momentmapflow}.
Then arguing as before, we find a zero $x_\infty$ of $\mu_{T^\perp}$.
From \cite[Theorem 5.25]{Szekelyhidi_book}, it follows that there exists $x_1 \in K_{T^\perp}^{\bb{C}}\cdot x_\infty$ such that $x_1$ is a critical point for $\norm{\mu}^2$. This implies \cite[Lemma 5.23]{Szekelyhidi_book} that $\mu(x_1) \in \mf{k}_{x_1}$.
Since $(\m{U}_{x_\infty}, \m{L}_{x_\infty})$ is isomorphic to $(Y, L')$, so is $(\m{U}_{x_1}, \m{L}_{x_1})$, so the function $\mu (x_1)$ also belongs to the Lie algebra $\mf{k}_{x}$ of the stabiliser of $x$. Therefore stability implies that
\[
\langle \mu(x_1), \mu(x_1) \rangle = 0,
\]
so $\mu(x_1)=0$.
\end{proof}

\begin{remark}\label{rmk:error}
Our proof of Theorem \ref{Th:Sz_stability} fixes a mistake in Sz\'ekelyidhi's original proof \cite[Proposition 8]{Szekelyhidi_deformations}.
In fact, Sz\'ekelyidhi's argument relies on a lower bound on the norm of the differential of $\mu$ along the infinitesimal vector field that is uniform in a ball around the origin; this uniform bound cannot exist, because the differential of $\mu$ vanishes at the fixed points, which form a linear subspace. So \cite[Proposition 9]{Szekelyhidi_deformations} cannot be applied.
A similar issue is present also in \cite[Proposition 3.3.2]{TiplerVanCoevering_deformationsSasaki}, where the argument would need a uniform bound on the Hessian of the norm-squared of $\nu$ which cannot hold for the same reason, and in \cite[Theorem 1.27]{Ortu_thesis}.
A related mistake can be found in \cite[\S 2.4]{Bronnle_PhDthesis}, where the techniques require the K\"ahler property but the finite-dimensional reduction gives a moment map with respect to a symplectic form that is not K\"ahler.

While our approach applies generally to perturbation problems, a workaround to this issue was developed in certain cases by Inoue in the case of K\"ahler-Ricci solitons \cite[\S 4.1.2]{Inoue_thesis} and by Fan in the case of Higgs bundles \cite{YueFan_ModuliHiggsBundles} using global techniques specific to those problems.
\end{remark}

The same argument can be applied to the finite-dimensional setting of GIT stability to show the analogous result that the orbit of a polystable deformation of a zero of the moment map contains a zero of the moment map.
This result is used in \cite{Szekelyhidi_deformations} to prove Theorem \ref{Th:Sz_stability}; although our proof of Theorem \ref{Th:Sz_stability} does not rely on finite-dimensional GIT, we report a proof of the finite-dimensional result that follows our strategy, as it is of independent interest.

\begin{theorem}\label{Thm:finite_dim_reduction}
Let $v \in V \subset T_0V$ be a GIT-polystable point for the linearised $G$-action on $T_0V$. Then there is a point $x_0 \in V$ in the same $K^{\bb{C}}$-orbit as $v$ such that $\mu(x_0) = 0$.
\end{theorem}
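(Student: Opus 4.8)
The plan is to reproduce the argument of Theorem~\ref{Th:Sz_stability} in the finite-dimensional GIT setting, replacing the passage through the Donaldson--Futaki invariant with the classical identification of the Hilbert--Mumford weight as the limiting value of the moment-map pairing (the Kempf--Ness picture, \cite{KempfNess}). Write $G = K^{\bb{C}}$ and first assume the stabiliser of $v$ in $K$ is discrete. Consider the moment map flow
\[
\frac{\dd}{\dd t} x_t = J\sigma(\mu(x_t)), \qquad x_0 = v,
\]
for the moment map $\mu$ on $V$. By Proposition~\ref{prop:stabilityflow} the flow stays inside $V$, and as in the proof of Theorem~\ref{Th:Sz_stability} it converges to a point $x_\infty \in V$. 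By Proposition~\ref{Prop:not_in_orbit} either $\mu(x_\infty) = 0$ and $x_\infty \in G\cdot v$, in which case $x_0 := x_\infty$ is the required point, or $x_\infty \notin G\cdot v$ and there exist $0 \ne \xi \in \mf{k}$ and $\widetilde v \in V$ with
\[
\lim_{t\to\infty}\exp(-it\xi)\cdot v = \widetilde v, \qquad \langle\mu(\widetilde v), \xi\rangle \ge 0.
\]

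The next step, and the crux of the proof, is to see that the last inequality contradicts GIT-polystability of $v$. The origin is a fixed point of the action with $\mu(0) = 0$, and the linearised $G$-action on $T_0V \cong \widetilde{H}^1$ has moment map the quadratic map $\nu$ of Definition~\ref{Def:map_nu}; in particular $\mu$ agrees with $\nu$ to leading order near $0$. Using this one checks that the limit $\widetilde v = \lim\exp(-it\xi)\cdot v$ coincides with the point of $T_0V$ obtained by degenerating $v$ along the one-parameter subgroup of $G$ generated by $\xi$ under the \emph{linear} action, and that $\langle\mu(\widetilde v),\xi\rangle$ equals, up to a positive constant, the negative of the Hilbert--Mumford weight of $v$ along $\xi$ for the linearised action. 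Polystability of $v$ then gives $\langle\mu(\widetilde v),\xi\rangle \le 0$, with equality only when the $\xi$-degeneration is of product type, i.e.\ $\widetilde v \in G\cdot v$. Combined with $\langle\mu(\widetilde v),\xi\rangle \ge 0$ this forces $\langle\mu(\widetilde v),\xi\rangle = 0$, and the last sentence of Proposition~\ref{Prop:not_in_orbit} yields $\widetilde v = x_\infty$ and $\mu(x_\infty) = 0$; the product-type equality case gives $x_\infty \in G\cdot v$, so again $x_0 := x_\infty$ works.

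When the stabiliser of $v$ is not discrete, one runs the reduction of \S\ref{subsec:momentmapflow}: choose a maximal torus $T$ in the stabiliser, form $\mf{k}_{T^\perp}$ and the subgroup $K_{T^\perp}$, and apply the moment map flow for $\mu_{T^\perp}$ starting at $v$. The argument above, with $\mu_{T^\perp}$ in place of $\mu$, produces $\widetilde v \in K_{T^\perp}^{\bb{C}}\cdot v$ with $\mu_{T^\perp}(\widetilde v) = 0$. Then \cite[Theorem~5.25]{Szekelyhidi_book} gives a point $x_1 \in K_{T^\perp}^{\bb{C}}\cdot\widetilde v$ which is a critical point of $\norm{\mu}^2$, and \cite[Lemma~5.23]{Szekelyhidi_book} shows $\mu(x_1) \in \mf{k}_{x_1}$. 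Since $x_1$ lies in the $G$-orbit of $v$, the function $\mu(x_1)$ belongs to the stabiliser algebra $\mf{k}_v$, so polystability of $v$ forces $\langle\mu(x_1),\mu(x_1)\rangle = 0$, whence $\mu(x_1) = 0$ with $x_0 := x_1$ in the $G$-orbit of $v$.

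I expect the main obstacle to be precisely the middle step: matching the quantity $\langle\mu(\widetilde v),\xi\rangle$ --- produced by the moment-map-flow analysis of $\mu$ on the (a priori curved) space $V$ --- with the algebraic Hilbert--Mumford numerical invariant of the \emph{linear} $G$-action on $T_0V$, together with the correct sign and the transfer of the equality case of polystability ($\xi$ generating a product degeneration $\Rightarrow$ $\widetilde v \in G\cdot v$). Once this dictionary is set up --- which ultimately rests on $0$ being a fixed point and on $\mu$ having $\nu$ as its leading term there --- this becomes an instance of the Kempf--Ness theorem and the rest is the same bookkeeping as in Theorem~\ref{Th:Sz_stability}.
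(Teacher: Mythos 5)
Your proposal is correct and follows essentially the same route as the paper: moment map flow from $v$, the dichotomy of Proposition \ref{Prop:not_in_orbit}, identification of $\langle\mu(\widetilde v),\xi\rangle$ with (minus) the Hilbert--Mumford weight of the linearised action via the quadratic map $\nu$, and the orthogonal-projection reduction when the stabiliser is not discrete. The step you flag as the crux is handled in the paper exactly as you anticipate, by Taylor-expanding $\mu$ along the ray $t\widetilde v$ inside the linear space $V$ (using $\mu(0)=0$ and $\dd_0\mu=0$ at the fixed origin, so $\mu(t\widetilde v)=\tfrac{t^2}{2}\nu(\widetilde v)+O(t^3)$) and then applying the Hilbert--Mumford criterion to $\nu$; the only other cosmetic difference is that the paper invokes the GIT-specific Remark \ref{rmk:flow_stays_inside} rather than Proposition \ref{prop:stabilityflow} to keep the flow inside $V$.
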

\begin{remark}\label{rmk:flow_stays_inside}
In the case of GIT stability we can give a different proof of Proposition \ref{prop:stabilityflow} to show the stability of the flow.
Let us compactify $V$ to $X$, where we can think of $X$ as a projective space. Let $S$ be the set of limit points of the moment map flow in $V$. Then
\[
S = \mu^{-1}(0) \cap V \subset \mu^{-1}(0),
\]
where the inclusion is open. Let $\mu^{-1}(0)/K$ be the symplectic quotient. The topological quotient
\[
\mu^{-1}(0) \to \mu^{-1}(0)/K
\]
maps the open set $S$ to an open set $[S]$ in the quotient, since $S$ is $K$-invariant.
From the Kempf--Ness theorem, since the symplectic quotient is isomorphic to the GIT quotient $X^{ss}\sslash G$, the set $[S]$ is open in the GIT quotient, so its preimage is open in $X$.
Now, the preimage of an open set via the GIT quotient is closed under taking the limit of the moment map flow.
Therefore, since $V \cap X^{ss}$ is also open and the preimage of $[S]$ under the GIT quotient is contained in $V \cap X^{ss}$, we find an open set $S'$ inside $V$ of semistable points such that the limit of the flow starting inside $S'$ are in $S'$ (although the flow might leave $S'$ in finite time).
\end{remark}

\begin{proof}[Proof of Theorem \ref{Thm:finite_dim_reduction}]
As before, without loss of generality, we can assume that $v$ has discrete stabiliser, otherwise we can project the moment map orthogonally to the stabiliser, as explained in \S \ref{subsec:momentmapflow}.
From Remark \ref{rmk:flow_stays_inside} we know that the flow of $\mu$ starting at $v$ has a limit $x_\infty$ inside $V$.
Assume that $x_\infty \notin K^{\bb{C}}\cdot v$.
Let $\xi \in \mf{k}$ and $\widetilde v\in V$ be given by Proposition \ref{Prop:not_in_orbit} such that
\[
\lim_{t \to \infty} \exp(-it\xi) \cdot v = \widetilde v
\]
and
\[
\langle \mu(\widetilde v), \xi \rangle \ge 0.
\]
Consider the path $x_t = t\widetilde v$ in $V$; it is well defined because $V$ is a linear space, so $\widetilde v \in T_0V$ can be viewed as an element of $V$ itself. Hence we have the following expansion of $\mu$ along the path $tv$ for small $t$:
\begin{equation}
\mu(t\widetilde v) = \mu(0) +t \dd_0\mu(\widetilde v) + \frac{t^2}{2} \left.\frac{\dd^2}{\dd t^2}\right\vert_{t= 0} \mu(t\widetilde v) + O(t^3).
\end{equation}
By our hypothesis, $\mu(0) =0$. Moreover, the differential $\dd_0\mu(v)$ vanishes, since $\mu$ is a moment map and the origin is a fixed point of the $K$-action, so
\begin{equation}\label{Eq:expansion_mu_nu}
\mu(t\widetilde v) =\frac{t^2}{2}\nu(\widetilde v) + O(t^3).
\end{equation}
The GIT polystability of $v$ implies that, by the Hilbert--Mumford criterion \cite[Theorem 12.2]{GeorgoulasRobbinSalamon_GITbook} the weight of the action on $\widetilde H^1$ of the one-parameter subgroup of $K$ defined by $\xi$
\[
w_{\nu} = - \langle \nu(\widetilde v),\xi \rangle
\]
is nonpositive.
Since $\nu$ is the leading order term of $\mu$, then stability in fact implies that
\[
\langle \mu(\widetilde v),\xi \rangle \le 0.
\]
Therefore, from Proposition \ref{Prop:not_in_orbit} we obtain that $x_\infty \in K^{\bb{C}}\cdot v$.
Therefore, since $v$ has finite stabiliser under $K$, so does $x_\infty$, which implies that $\mu(x_\infty) = 0$.
\end{proof}

\begin{remark}
It follows from the proof of Theorem \ref{Thm:finite_dim_reduction} that the GIT-polystability of $x \in V$ implies K-polystability of the corresponding manifold $(\m{U}_x, \m{L}_x)$.
\end{remark}

\bibliographystyle{acm} %plain per i numeri
\bibliography{../bibliografia}

\end{document}